\newcommand{\R}{\mathbb{R}}
\newcommand{\C}{\mathbb{C}}
\newcommand{\ii}{\mathrm{i}}
\newcommand{\dd}{\,\mathrm{d}}
\newcommand{\ee}{\mathrm{e}}
\newcommand{\eps}{\varepsilon}
\newcommand{\phe}{\varphi}
\renewcommand{\varepsilon}{\epsilon}
\newcommand{\la}{\langle}
\newcommand{\ra}{\rangle}
\def\build#1_#2^#3{\mathrel{
\mathop{\kern 0pt#1}\limits_{#2}^{#3}}}
\def\td_#1,#2{\mathrel{\mathop{\build\longrightarrow_{#1\rightarrow #2}^{}}}}
\def\dl_#1,#2{\mathrel{\mathop{\build=_{#1\rightarrow #2}^{}}}}
\def\tdw_#1,#2{\mathrel{\mathop{\build\rightharpoonup_{#1\rightarrow #2}^{}}}}
\newtheorem{definition}{Definition}[section]
\newtheorem{theorem}[definition]{Theorem}
\newtheorem{lemma}[definition]{Lemma}
\newtheorem{proposition}[definition]{Proposition}
\newtheorem{corollary}[definition]{Corollary}
\newtheorem{remark}[definition]{Remark}
\numberwithin{equation}{section}
\title[A proof of the soliton resolution conjecture for the Benjamin--Ono equation]{A proof of the soliton resolution conjecture\\ for the Benjamin--Ono equation}
\date{\today}
\author[L.~Gassot]{Louise Gassot}
\address{CNRS and Department of Mathematics, University of Rennes, France}
\email{louise.gassot@cnrs.fr}
\author[P.~Gérard]{Patrick Gérard}
\address{Laboratoire de Mathématiques d'Orsay,  Université Paris-Saclay, Orsay, France} \email{patrick.gerard@universite-paris-saclay.fr}
\author[P.~D.~Miller]{Peter D. Miller}
\address{Department of Mathematics, University of Michigan, Ann Arbor, MI}
\email{millerpd@umich.edu}
\keywords{Benjamin--Ono equation, soliton, long-time behavior, integrability, spectral theory}
\subjclass{37K10, 35B40, 35Q51}
\begin{document}

\maketitle

\begin{abstract}
We give a proof of the soliton resolution conjecture for the Benjamin--Ono equation, namely  every solution with sufficiently regular and decaying initial data can be written as a finite sum of soliton solutions with different velocities up to a radiative remainder term in the long--time asymptotics. We provide a  detailed correspondence between the spectral theory of the  Lax operator associated to the initial data and the different terms of the soliton resolution expansion.
The proof is based on a new use of a representation formula of the solution due to the second author, and on a detailed analysis of the distorted Fourier transform 
associated to the Lax operator.
\end{abstract}

\setcounter{tocdepth}{1}
\tableofcontents

\section{Introduction and main result}
This paper is devoted to the long-time behavior of solutions of the Benjamin--Ono equation,
\begin{equation}\label{BOinit}
\partial_tu-\partial_x|D|u+\partial_x(u^2)=0\ ,\ u(0,x)=u_0(x).
\end{equation}
Equation \eqref{BOinit} was introduced in 1967 \cite{Benjamin67} in order to model long, unidirectional internal gravity waves 
in a two-layer fluid with infinite depth, see e.g. the book \cite{KleinS21} and \cite{Paulsen24} for a recent derivation. Here $u=u(t,x)$ is a real--valued function and $|D|$ denotes the Fourier multiplier associated to the symbol $|\xi |$ acting on functions on the real line. Alternatively, $|D|=H\partial_x$, where $H$ denotes the Hilbert transform, normalized as
\begin{equation}\label{def:Hilbert}
Hf(x):=\frac{1}{\pi}\mathrm{p.v.}\int_\R \frac{f(y)}{x-y}\dd y.
\end{equation}
Global wellposedness theory of the initial--value problem \eqref{BOinit} in Sobolev spaces was the object of  intensive research from \cite{Saut79} to \cite{KillipLaurensVisan23-BO}, where it is shown that \eqref{BOinit} is solved by a continuous flow map on every Sobolev space $H^s(\R)$ for $s>-1/2$, so that, if $u_0\in H^s(\R)$, $u(t)$ is bounded in $H^s(\R)$ as $t\to \infty$. We refer to \cite{KillipLaurensVisan23-BO} for more references on this topic.  In this paper, we will be only interested in the case $s\geq 0$, see also \cite{MolinetP12}.

A very natural question is the description of the long--time dynamics of such solutions $u$. In order to be able to answer this question,
one must have in mind a class of special solutions of \eqref{BOinit}, which were the focus of Benjamin's original paper, namely solutions of the form
\begin{equation}\label{eq:solitonBO}
u(t,x)=R_p(x-c_pt),\ R_p(y):=\frac{2\mathrm{Im}p}{|y+p|^2},\ c_p:=\frac{1}{\mathrm{Im}p},
\end{equation}
where $p\in \C_+$ is a complex number with a positive imaginary part. A theorem by Amick and Toland \cite{AmickToland1991} establishes that all solutions of \eqref{BOinit} of the form $u_0(x-ct)$ are given by \eqref{eq:solitonBO}. Such solutions are called \emph{soliton solutions} of the Benjamin--Ono equation and are expected to be the building blocks of the whole dynamics. More precisely, the soliton resolution conjecture for this equation --- see e.g. \cite{IfrimTataru2019}--- roughly claims that every solution to \eqref{BOinit} behaves as $t\to \pm \infty $ like a  sum of such soliton solutions and a radiation term, namely a solution of the linearized equation of \eqref{BOinit} at $u=0$:
\begin{equation}\label{linearized}
\partial_tw =\partial_x|D|w,
\end{equation}
which we shall denote by $w(t):=\mathrm{e}^{t\partial_x|D|}w(0).$

 The purpose of this paper is to prove this conjecture for a large class of initial data $u_0$, including finite linear combinations of soliton solutions $R_p$ and, say, Schwartz functions. Our main result reads as follows.
\begin{theorem}\label{thm:solitonres}
Let $u_0\in H^1(\R)$ be a real--valued function such that $xu_0\in H^1(\R)$. Assume moreover that there exist $c_0\in \R$ and $v_0\in L^2(\R)$ such that
  \begin{equation}\label{hypu}
 \forall x\in \R,\  x^2u_0(x)=c_0+v_0(x).
 \end{equation}
 Then there exist a nonnegative integer $N$, elements $p_1,\dots ,p_N$ of $\C_+$, with $\mathrm{Im}(p_1)<\dots <\mathrm{Im}(p_N)$, and  real--valued functions $u_\infty ^\pm $ in $H^1(\R) $ such that
\begin{equation}\label{eq:solitonres}
 u(t,x)-\sum_{j=1}^N R_{p_j}(x-c_{p_j}t) -\mathrm{e}^{t\partial_x|D|}u_\infty^\pm  \longrightarrow 0 
 \end{equation}
in $H^1(\R) $ as $t\to \pm\infty$.
\end{theorem}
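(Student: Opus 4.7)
The plan is to exploit the complete integrability of \eqref{BOinit} via the Lax operator $L_u = D_x - T_u$ acting on the Hardy space $L^2_+(\R)$ of the upper half--plane (where $T_u$ denotes the Toeplitz operator with symbol $u$). Under the decay hypothesis \eqref{hypu} together with $u_0,xu_0\in H^1$, the operator $L_{u_0}$ is a relatively compact perturbation of $D_x$, so its spectrum decomposes into a finite set of negative eigenvalues $\lambda_1<\cdots<\lambda_N<0$ together with absolutely continuous spectrum filling $[0,\infty)$. The discrete eigenvalues are invariants of the Benjamin--Ono flow and, under the correspondence $\lambda_j = -\tfrac{1}{2\,\mathrm{Im}(p_j)}$ (up to normalization), they encode precisely the $N$ solitons that must appear in \eqref{eq:solitonres}. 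The ordering $\mathrm{Im}(p_1)<\cdots<\mathrm{Im}(p_N)$ corresponds to ordering the eigenvalues, and the continuous spectrum will carry the radiation.

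The core step is the use of the representation formula due to the second author, which expresses $u(t,x)$ in terms of the spectral data of $L_{u_0}$ evolved linearly in time. Concretely, I would first construct the distorted Fourier transform $\mathscr{F}_u$ associated to $L_u$, consisting of generalized eigenfunctions $\psi(\xi,\cdot)$ for $\xi\geq 0$ (continuous spectrum) and bound states $\varphi_j(\cdot)$ for $j=1,\ldots,N$ (discrete spectrum). Under the Benjamin--Ono flow, the time evolution diagonalizes: the continuous part of $\mathscr{F}_{u_0}(u_0)$ acquires the phase factor $\mathrm{e}^{\ii t\xi^2}$ (matching the dispersion relation of \eqref{linearized}), while the discrete modes evolve by pure phases with frequencies $\lambda_j^2$. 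Inverting the distorted Fourier transform via the representation formula then yields an explicit decomposition of $u(t,x)$ into contributions indexed by the spectral data.

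The heart of the argument is the long--time asymptotic analysis of this representation. For each $j$, in the spatial window moving with velocity $c_{p_j}$, stationary phase must isolate the contribution of the $j$--th bound state and produce exactly $R_{p_j}(x-c_{p_j}t)$, while cross--terms between distinct bound states decay because the phases $(c_{p_k}-c_{p_j})t$ grow; the continuous--spectrum piece, in these windows, decays by nonstationary phase. Away from all soliton windows, the discrete contributions decay and the continuous part must be shown to asymptotically match $\mathrm{e}^{t\partial_x|D|}u_\infty^\pm$, where $u_\infty^\pm$ is defined via the inverse \emph{flat} Fourier transform applied to a suitable scattering--type limit of the distorted Fourier coefficients (a wave--operator construction). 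Convergence in $H^1$ is obtained by bounding weighted $L^2$ norms of the distorted Fourier data, for which the weighted regularity assumptions on $u_0$ are exactly what is needed to control $\xi$--derivatives of the spectral measure.

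The main obstacle I expect is precisely this last asymptotic analysis: establishing the wave--operator limits from the distorted Fourier side to the flat Fourier side, with quantitative estimates strong enough to give $H^1$ convergence uniformly across the transition between the soliton regions and the radiation region. This requires fine mapping properties of $\mathscr{F}_{u_0}$ (boundedness, asymptotics of $\psi(\xi,x)$ as $\xi\to 0^+$ and $\xi\to\infty$, behavior near the thresholds produced by the bound states), together with commutator estimates with the weight $x$ so that the $H^1$--with--weight hypothesis on $u_0$ translates to the control of the spectral data needed for the stationary phase arguments to close.
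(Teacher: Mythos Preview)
Your proposal assembles the right ingredients --- the Lax operator $L_{u_0}=D-T_{u_0}$, its finite negative point spectrum, the distorted Fourier transform, and the explicit representation formula --- but it conflates two distinct strategies and thereby misses the actual mechanism of the proof.

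You describe the plan as: diagonalize the flow via the distorted Fourier transform $\mathscr{F}_{u_0}$, evolve the coefficients by pure phases, and invert. That is an inverse--scattering scheme, and it is \emph{not} how the explicit formula is used here. The representation formula is
\[
\Pi u(t,z)=\frac{1}{2\ii\pi}\,I_+\bigl((X^*-2tL_{u_0}-z)^{-1}\Pi u_0\bigr),
\]
and the operator $X^*$ does not commute with $L_{u_0}$, so this is not a spectral diagonalization. The paper works directly with this resolvent: one evaluates $\Pi u(t,z-2t\lambda)$ and studies it separately for $\lambda=\lambda_j$ (an eigenvalue, yielding the soliton limit via elementary manipulations with $X^*\varphi_j$) and for $\lambda>0$ varying (yielding the radiation limit, where the distorted Fourier transform enters only as an analytical tool to identify the limiting profile, not as a conjugation of the dynamics). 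Your ``stationary phase in moving windows'' picture is morally correct for the radiation piece, but the hard work is the a priori $L^2_{x,\lambda}$ bound on $\sqrt{t}\,(X^*-2t(L_{u_0}-\lambda)-z)^{-1}\Pi u_0$ and its weak convergence to zero, which have no analogue in your outline.

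The second genuine gap is the passage to strong $H^1$ convergence. You propose to get it from weighted mapping properties of $\mathscr{F}_{u_0}$; the paper does something quite different and more robust. One first proves only \emph{weak} limits (pointwise in $z$ for the solitons, weak--$L^2(\Lambda)$ in $\lambda$ for the radiation), and then upgrades to strong $L^2$ convergence of the remainder $r(t)$ by expanding $\|\Pi r(t)\|_{L^2}^2$ and closing the energy balance via the distorted Plancherel identity applied to $f=\Pi u_0$. The further upgrade to $H^1$ repeats this with the conservation law $\|L_{u(t)}\Pi u(t)\|_{L^2}^2=\|L_{u_0}\Pi u_0\|_{L^2}^2$ and Plancherel applied to $L_{u_0}\Pi u_0$. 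Without this weak--to--strong mechanism, your plan has no clear route to the $H^1$ conclusion.
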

 Notice that the assumptions of Theorem \ref{thm:solitonres} on $u_0$ are  satisfied by any finite linear combination of  solitons, and more generally by any real--valued rational function $u_0\in L^2(\R)$ such that $xu_0\in L^2(\R)$, and also, of course, by any function in the Schwartz class. In the special case of a multi--soliton solution, namely if $u_0$ is a finite sum of functions $R_p$, Theorem \ref{thm:solitonres} was already known, see \cite{KaupM98}, \cite{Sun2020}, with no radiation term, $u_\infty^\pm=0$. On the other hand, Theorem \ref{thm:solitonres} was  more recently established in \cite{BlackstoneGGM24a}, in the special case $u_0=-R_p$, where $N=0$, so that the solution is purely radiative as $t\to \pm \infty$. A more precise formulation of Theorem \ref{thm:solitonres} is given in Section \ref{detail:solitonres} and the strategy of proof is presented in Section \ref{strategy}. 
 
Before coming to Section \ref{detail:solitonres}, let us mention related works dealing with soliton resolution for other Hamiltonian partial differential equations. 
The soliton resolution property seems to have been first observed in the case of the Korteweg--de Vries equation in \cite{EckSch83}, where the integrable structure is used through inverse scattering transform and the Gelfand--Levitan--Marchenko integral equation. Later, these asymptotics were revisited in \cite{DeiftVZ94}, using the Riemann--Hilbert formulation of the inverse scattering transform and the associated Deift--Zhou nonlinear steepest descent method. This approach was also used in \cite{CuccJenk16} to study the asymptotic stability of  multi--soliton solutions to the defocusing cubic nonlinear Schr\"odinger equation with non--zero boundary conditions at infinity, in \cite{BorgheseJMcL18} to study the long time asymptotics of the focusing cubic nonlinear Schr\"odinger equation, and in \cite{ChenLiu21} for the focusing modified Korteweg--de Vries equation. Finally, the case of the derivative nonlinear Schr\"odinger equation was studied in
 \cite{JenkLPS18} using again inverse scattering tools. Since then, similar Riemann--Hilbert/Deift--Zhou techniques have been applied to a myriad of other integrable dispersive equations. Notice that the Benjamin--Ono equation \eqref{BOinit} is also an integrable equation. However, its Cauchy problem on $\mathbb{R}$ lacks a justified inverse-scattering transform formulated as any kind of Riemann--Hilbert problem, making the large-time asymptotics inaccessible to the Deift--Zhou method.  Consequently, the proof of Theorem \ref{thm:solitonres} is very different from the above references, since inverse spectral theory is replaced by an explicit representation formula discovered by the second author in \cite{Gerard22}, see Theorem \ref{thm:explicit} below. Moreover, the upshot is also very different, since in Theorem \ref{thm:solitonres} the parameters $p_j$ are the same as $t\to +\infty$ and $t\to -\infty$, while this is not the case in the above references, where some non-trivial scattering map relates the soliton parameters as $t\to \pm \infty$. The difference also holds for the radiation term, since it is expressed as a free scattering term in Theorem \ref{thm:solitonres}, while it may be expressed as a modified scattering term in the above references.
 
 The soliton resolution property was recently established for some non--integrable Hamiltonian PDE, such as the energy critical focusing nonlinear wave equation with radial symmetry \cite{DuyKM23}, \cite{JenLaw23}, or the energy--critical wave map with equivariant symmetry \cite{DuyKM22}, \cite{JenLaw25}. Let us stress several differences of these two cases with the integrable ones. Firstly, the soliton solutions are no longer obtained by translation, but by dilation of the space variable. Secondly,  global existence does not always hold, hence the asymptotics hold either as $t\to \pm \infty$ or as $t$ tends to the blow--up time. Thirdly, the methods of proof are very different, using modulation and energy criticality, as well as energy channels for the wave equation. Such a modulation approach seems hopeless for the Benjamin--Ono equation, which is $H^{-1/2}$--critical, with no conservation law at this regularity level.
 
Let us conclude by mentioning an equation which may serve as a bridge between the  above two sets of equations, namely the Calogero--Moser derivative nonlinear Schr\"odinger equation \cite{GeLe-24}, \cite{KLV25}, which is mass critical and admits an integrable structure allowing an explicit formula very close to the Benjamin--Ono one. For this equation, a weak soliton resolution property was proved in \cite{KimK24} in the focusing case by means of modulation methods, while for the defocusing case scattering theory was very recently established in \cite{Chen25} using the integrable approach.

\subsection*{Acknowledgements}
L. Gassot was supported by the France 2030 framework program, the Centre Henri Lebesgue ANR-11-LABX-0020-01, and the ANR project HEAD--ANR-24-CE40-3260.\\
P. G\'erard was partially supported by the French Agence Nationale de la Recherche under the ANR project ISAAC--ANR-23--CE40-0015-01.\\
P. D. Miller was partially supported by the National Science Foundation under grants DMS-2204896 and DMS-2508694.  \\

\section{A more precise formulation of the main result}\label{detail:solitonres}
\subsection{The Hardy space and the Lax operator}
Let us now introduce the spectral data that will be relevant parameters in the long-time asymptotics stated in Theorem \ref{thm:solitonres}. We will use  the integrable structure for the Benjamin--Ono equation, for which we refer e.g. to \cite{Gerard26}.
We denote by $L^2_+(\R)$ the set of functions $f\in L^2(\R)$ whose Fourier transform $\widehat{f}$ is supported in the half-line $[0,+\infty)$. An element $f$ in the Hardy space $L^2_+(\R)$ extends as a  holomorphic function on the upper half-plane $\C_+$ satisfying
\[
\sup_{y>0}
\int_{\R}|f(x+\ii y)|^2\dd x<+\infty,
\]
in which case $f$ can be identified with the limit in $L^2(\R)$ of $f(\cdot+\ii y)$ as $y\to 0^+$.
We denote by $\Pi$ the orthogonal projector from $L^2(\R)$ onto $L^2_+(\R)$. The Toeplitz operator $T_{b}$ associated to $b\in L^{\infty}(\R)$ is then given by the formula
\[
\forall f\in L^2_+(\R), \quad
T_{b}f:=\Pi (b f).
\]
The Lax operator for the Benjamin-Ono equation is given by $L_{u_0}=D-T_{u_0}$, where $D:=-\ii\frac{\dd }{\dd x}$, with domain $D(L_{u_0})=H^1_+(\R):=H^1(\R)\cap L^2_+(\R)$.

Finally, we define the operator $X^*$ by its domain
\[
D(X^*)
	=\{f\in L^2(\R)\colon \widehat{f}|_{(0,\infty)}\in H^1(0,\infty)\},
\]
and the formula
\[
\widehat{X^*f}(\xi)
	:=\ii \frac{\dd }{\dd \xi}\widehat{f}(\xi),
\quad
\xi>0.
\]
Applying the inverse Fourier transform, we find
\[
X^*f(x)=xf(x)+\frac{I_+(f)}{2\ii\pi},
\]
where $I_+(f):=\widehat{f}(0^+)$. The operator $X^*$ can also be seen as the adjoint on $L^2_+(\R)$ of  multiplication by $x$, see \cite{Gerard26}.

Next we define  soliton parameters. We denote by $\lambda_1,\dots, \lambda_N$ the --- simple and negative --- eigenvalues of the Lax operator $L_{u_0}=D-T_{u_0}$ on the Hardy space $L^2_+(\R)$, and by $\phe_1,\dots, \phe_N$ the corresponding eigenfunctions, normalized in $L^2$. The finiteness of $N$ has been established in \cite[Theorem 1.2]{Wu16a}. For every $j\in \{1,\dots, N\}$, we set
\begin{equation}\label{def:parameter}
 p_j=-\langle X^*\phe_j,\phe_j\rangle .
 \end{equation}

Recall that, according to Wu's identities~\cite[Lemma 2.5 and (2.23)]{Wu16a} --- see also \cite{BadreddineKV25} if $u$ is less  regular---,
\begin{equation}\label{eq:Wu} |\langle \phe_j, \Pi u_0\rangle |^2=-2\pi \lambda_j\ ,\ \lambda_j I_+(\phe_j)=-\langle \phe_j,\Pi u_0\rangle ,
\end{equation}
and consequently, expressing $\langle X^*\phe_j,\phe_j\rangle$ on the Fourier side and integrating by parts, we find
\begin{equation}\label{Imp}
\mathrm{Im}\, p_j=\frac{\vert I_+(\phe_j)\vert ^2}{4\pi}=\frac{1}{2|\lambda_j|}>0.
\end{equation}

Now we define generalized eigenfunctions and radiation profiles.
For every $\lambda>0$, we denote by $m_\pm=m_\pm(x,\lambda)$ the unique solution in the space $L^\infty(\R)$  of the equation
\begin{equation}\label{def:m}
 (L_{u_0}-\lambda\,\mathrm{Id})m_\pm=0\ ,\ \lim_{x\to \pm\infty}{\mathrm{e}^{-\ii\lambda x}m_\pm(x,\lambda)}=1.
 \end{equation}
For every $f\in L^2_+(\R)$, we denote by $\widetilde f^\pm $ the distorted Fourier transform of $f$ defined by 
\begin{equation}\label{distortedFT}
 \widetilde f^{\pm} (\lambda)=\int_\R f(x)\overline {m_\pm(x,\lambda)}\dd x.
 \end{equation}
Finally, we define  real--valued functions $u_\infty^\pm \in L^2(\R)$ by
\begin{equation}\label{def:radiationprofile}
\forall \lambda >0\ ,\ \widehat u_\infty^\pm (\lambda )=\widetilde{\Pi u_0}^\mp(\lambda).
\end{equation}
We recall that the existence and uniqueness of $m_-$ was proven in~\cite[Theorem 26]{BlackstoneGGM24a}; the analogous result for $m_+$ can be proved similarly. Moreover, the following distorted Plancherel identity was proven in~\cite[Theorem 26]{BlackstoneGGM24a}:
\begin{equation}\label{Planchereldist}
\sum_{j=1}^N \vert \langle f,\phe_j\rangle \vert ^2+\int_0^\infty \vert \widetilde f^\pm(\lambda)\vert ^2 \frac{\dd\lambda}{2\pi} =\int_\R |f(x)|^2\dd x.
\end{equation}

\subsection{The main result in more detail}
The following result relates the parameters of Theorem \ref{thm:solitonres} to the spectral quantities introduced above.
\begin{theorem}[Soliton resolution and spectral data]\label{thm:main}
Under the assumptions of Theorem \ref{thm:solitonres}, the function $r^\pm$ defined by
\[ r^\pm (t,x):=u(t,x)-\sum_{j=1}^N R_{p_j}(x-c_{p_j}t)-\mathrm{e}^{t\partial_x|D_x|}u_\infty^\pm (x)\]
satisfies
\begin{equation}\label{cancelr}
\Vert r^\pm (t,\diamond)\Vert_{H^1}^2:=\int_\R |r^\pm (t,x)|^2\dd x +\int_\R |\partial_xr^\pm (t,x)|^2\dd x\td_t,{\pm \infty}0.
\end{equation}
\end{theorem}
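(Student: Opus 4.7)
The backbone of the proof is the explicit representation formula of \cite{Gerard22}, Theorem \ref{thm:explicit}, which expresses the Hardy projection $\Pi u(t,x)$ as a matrix element involving the Lax operator $L_{u_0}$, the translation generator $X^*$, and the data $\Pi u_0$. Schematically, $\Pi u(t,x)$ appears as the value at $-x$ of a resolvent of $X^* - tL_{u_0}$ applied to $\Pi u_0$, and the plan is to extract the long-time asymptotics by inserting the spectral decomposition of $L_{u_0}$ provided by the distorted Plancherel identity \eqref{Planchereldist}. This separates the formula into a finite sum of rank-one contributions indexed by the bound states $\phe_j$ and an integral against the continuous spectrum weighted by $\widetilde{\Pi u_0}^\pm$; the correct choice of sign in the distorted transform is dictated by the sign of $t$ and by the spatial region in which the dispersive radiation accumulates.

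For the discrete contribution, one uses $L_{u_0}\phe_j = \lambda_j\phe_j$ together with the orthonormality of the $\phe_j$'s to reduce the rank-one term coming from $\phe_j$ to the inversion of a scalar expression of the form $-p_j - t\lambda_j - x$, thanks to the definition \eqref{def:parameter} of $p_j$. The relation \eqref{Imp} gives $\mathrm{Im}\,p_j = 1/(2|\lambda_j|)>0$, while Wu's identity \eqref{eq:Wu} fixes the normalization $|\langle\phe_j,\Pi u_0\rangle|^2 = -2\pi\lambda_j$; combining these and taking $2\mathrm{Re}$ of the contribution of $\phe_j$ yields exactly the soliton profile $R_{p_j}(x - c_{p_j}t)$ with $c_{p_j} = 2|\lambda_j|$. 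The off-diagonal couplings between distinct $\phe_j$ and $\phe_k$ carry no factor of $t$ and are therefore $O(1/t)$ after inversion, so they are absorbed into the error $r^\pm$.

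For the continuous-spectrum contribution, the same insertion produces an oscillatory integral of the schematic form
\[
\int_0^\infty \widetilde{\Pi u_0}^\mp(\lambda)\, \mathrm{e}^{-\mathrm{i}t\lambda^2}\, m_\mp(x,\lambda)\,\frac{\dd\lambda}{2\pi},
\]
in which the phase $\mathrm{e}^{-\mathrm{i}t\lambda^2}$ reflects the dispersion of the linearized equation \eqref{linearized} on positive frequencies. As $t\to+\infty$, stationary phase forces the integrand to concentrate at $\lambda = -x/(2t)$, which for $\lambda>0$ lives in the region $x\to-\infty$; there the Jost asymptotic $m_-(x,\lambda)=\mathrm{e}^{\mathrm{i}\lambda x}+o(1)$ from \cite{BlackstoneGGM24a} allows one to replace $m_-$ by $\mathrm{e}^{\mathrm{i}\lambda x}$, after which the integral becomes exactly $\mathrm{e}^{t\partial_x|D_x|}u_\infty^+$ by the definition \eqref{def:radiationprofile}. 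The case $t\to-\infty$ is symmetric, with $m_+$ and $u_\infty^-$ in place of $m_-$ and $u_\infty^+$.

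The main technical obstacle is expected to be the upgrade from $L^2$ to $H^1$ convergence together with the uniform control of the low-frequency endpoint $\lambda\to 0^+$. Estimating one spatial derivative of the oscillatory integral above amounts to a $\partial_\lambda$-integration by parts, which forces $\widetilde{\Pi u_0}^\pm$ to be twice differentiable in $\lambda$; through the identity $\widetilde{X^*f}^\pm = \mathrm{i}\partial_\lambda\widetilde{f}^\pm$ (valid up to a Dirac-type contribution at $\lambda = 0^+$ controlled by $I_+$), this corresponds to the pointwise action of $X^*$ and $(X^*)^2$ on $\Pi u_0$. The structural assumption \eqref{hypu} is precisely what makes $(X^*)^2 \Pi u_0$ controllable modulo a constant piece at $\lambda = 0^+$, which is in turn harmless because $0$ is a regular point of the spectrum of $L_{u_0}$ (the eigenvalues $\lambda_j$ being strictly negative). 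A uniform handling of the transition between the soliton bands $x\sim c_{p_j}t$ and the dispersive region $x\sim -2\lambda t$ with $\lambda>0$ will then require matched partitions of unity together with sharp, uniformly controlled Jost remainder estimates.
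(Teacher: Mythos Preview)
Your plan has a structural gap at the very first step. You propose to ``insert the spectral decomposition of $L_{u_0}$'' into the resolvent $(X^*-2tL_{u_0}-z)^{-1}$ and then read off the discrete and continuous contributions term by term. But $X^*$ and $L_{u_0}$ do not commute, so projecting onto $\phe_j$ or onto the generalized eigenfunctions $m_\mp(\cdot,\lambda)$ does \emph{not} reduce the resolvent to a scalar inversion. In particular, your claim that the rank-one piece from $\phe_j$ becomes the inverse of $-p_j-t\lambda_j-x$ is not justified: $X^*\phe_j$ has a component orthogonal to $\phe_j$, and this component is fed back through the full resolvent, not simply dropped. Likewise, there is no mechanism by which the continuous-spectrum contribution collapses to the clean oscillatory integral $\int_0^\infty \widetilde{\Pi u_0}^\mp(\lambda)\,\mathrm{e}^{-\mathrm{i}t\lambda^2}\,m_\mp(x,\lambda)\,\frac{\dd\lambda}{2\pi}$; the cross terms between $X^*$ and the continuous spectral projection of $L_{u_0}$ are exactly what makes the radiation limit hard.

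The paper's route is quite different from what you outline. Rather than a direct spectral decomposition of the resolvent, it first establishes two \emph{weak} pointwise limits for $\Pi u(t,z-2t\mu)$ as $t\to\infty$: the soliton limit at $\mu=\lambda_j$ (handled by showing that any $f\perp\phe_j$ gives a contribution tending to zero, via writing $f=(L_{u_0}-\lambda_j)h$), and the radiation limit at $\mu=\lambda>0$ in the sense of weak $L^2(\Lambda)$ convergence (handled by a delicate a~priori analysis of $f_{t,z,\lambda}:=(X^*-2t(L_{u_0}-\lambda)-z)^{-1}\Pi u_0$ and an auxiliary unknown $q_{t,z,\lambda}$). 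These weak limits are then combined with the distorted Plancherel identity \eqref{Planchereldist} applied to $\Pi u_0$ to force the \emph{strong} $L^2$ convergence $\|r^\pm(t,\cdot)\|_{L^2}\to 0$. Finally, and this is where your proposal diverges most sharply, the $H^1$ upgrade is \emph{not} obtained by differentiating the oscillatory integral and invoking $(X^*)^2\Pi u_0$; it comes instead from the conservation law $\|L_{u(t)}\Pi u(t)\|_{L^2}^2=\|L_{u_0}\Pi u_0\|_{L^2}^2$ together with a second application of \eqref{Planchereldist} to $L_{u_0}\Pi u_0$, yielding an energy balance that forces $\|\partial_x\Pi r(t,\cdot)\|_{L^2}\to 0$. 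The hypothesis \eqref{hypu} is used not for $H^1$ control but to obtain the fine asymptotics of the Jost function $a(y,\lambda)$ as $y\to-\infty$ needed in the radiation-limit analysis.
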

\begin{corollary}\label{cor:special}
Under the assumptions of Theorem \ref{thm:solitonres}, 
\begin{itemize}
\item Purely radiative asymptotics --- $N=0$ --- holds if and only if the operator $L_{u_0}$ is positive. This is in particular the case if $u_0\leq 0$.
\item Given a positive integer $N$, a $N$--soliton asymptotics holds for $t\to +\infty $ --- $u_\infty ^+=0$--- or for $t\to -\infty$ --- $u_\infty ^-=0$--- if and only if $u_0$ is a sum of $N$ soliton profiles, namely there exist  $\tilde p_1,\dots, \tilde p_N\in \C_+$ such that
\[ u_0(x)=\sum_{j=1}^N R_{\tilde p_j}(x).\]
\end{itemize}
\end{corollary}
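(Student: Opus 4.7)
For the first part of the corollary, the integer $N$ is by construction the number of simple negative eigenvalues of the self-adjoint Lax operator $L_{u_0}=D-T_{u_0}$ acting on $L^2_+(\R)$. Since $T_{u_0}$ is a $D$-compact perturbation (using $u_0\in H^1(\R)$), Weyl's theorem gives $\sigma_{\mathrm{ess}}(L_{u_0})=[0,\infty)$, so the condition $N=0$ is equivalent to $L_{u_0}\geq 0$. The stronger pointwise sufficient condition $u_0\leq 0$ then follows directly from the quadratic form
\begin{equation*}
\langle L_{u_0}f,f\rangle = \int_0^\infty \xi\,|\widehat f(\xi)|^2\,\frac{\dd\xi}{2\pi}-\int_\R u_0(x)|f(x)|^2\,\dd x\,,\qquad f\in L^2_+(\R),
\end{equation*}
both of whose terms are nonnegative under that hypothesis.

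For the second part, that a multi-soliton initial datum $u_0=\sum_{j=1}^N R_{\tilde p_j}$ has $u_\infty^\pm=0$ is essentially the classical Kaup--Matsuno--Sun result \cite{KaupM98,Sun2020}: the corresponding Benjamin--Ono solution stays a pure sum of $N$ traveling solitons at all times, and the uniqueness of the decomposition in Theorem \ref{thm:main} then forces $u_\infty^\pm=0$. For the converse, assume $u_\infty^+=0$ (the case $u_\infty^-=0$ is symmetric). Definition \eqref{def:radiationprofile} gives $\widetilde{\Pi u_0}^{-}(\lambda)=0$ for every $\lambda>0$, and applying the distorted Plancherel identity \eqref{Planchereldist} to $f=\Pi u_0$ collapses it to
\begin{equation*}
\sum_{j=1}^N|\langle \Pi u_0,\phe_j\rangle|^2 = \|\Pi u_0\|_{L^2}^2,
\end{equation*}
which forces $\Pi u_0\in\mathrm{span}(\phe_1,\dots,\phe_N)$.

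The main obstacle is the final step, namely deducing from this purely spectral condition the explicit pointwise form $u_0=\sum_{j=1}^N R_{\tilde p_j}$. My plan is to invoke the explicit representation formula of \cite{Gerard22} (Theorem \ref{thm:explicit}): when the continuous component of the distorted Fourier transform of $\Pi u_0$ vanishes, the formula for $\Pi u_0$ reduces to a finite-dimensional rational expression in $x$, with exactly $N$ simple poles in $\C_-$ at points of the form $-\tilde p_j$ and $\mathrm{Im}(\tilde p_j)=1/(2|\lambda_j|)>0$ by \eqref{Imp}. A partial-fraction expansion then yields $\Pi u_0(x)=\sum_{j=1}^N \frac{\ii}{x+\tilde p_j}$, and combining with the identity $R_p(x)=\ii\bigl(\tfrac{1}{x+p}-\tfrac{1}{x+\bar p}\bigr)$ produces $u_0=2\,\mathrm{Re}\,\Pi u_0=\sum_{j=1}^N R_{\tilde p_j}$ with all $\tilde p_j\in\C_+$, as desired.
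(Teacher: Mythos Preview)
Your treatment of the first bullet and of the reduction $u_\infty^\pm=0\Longleftrightarrow \Pi u_0\in\mathrm{span}(\phe_1,\dots,\phe_N)$ via the distorted Plancherel identity \eqref{Planchereldist} is correct and is exactly what the paper does.

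The gap is in your final paragraph. The explicit formula of Theorem~\ref{thm:explicit} at $t=0$ reads
\[
\Pi u_0(z)=\frac{1}{2\ii\pi}I_+\bigl((X^*-z)^{-1}\Pi u_0\bigr),
\]
which is nothing more than the Cauchy integral representation of an arbitrary element of $L^2_+(\R)$; it does \emph{not} collapse to a rational function just because $\widetilde{\Pi u_0}^-\equiv 0$. The operator $(X^*-z)^{-1}$ is completely insensitive to the spectral decomposition of $L_{u_0}$, so the vanishing of the continuous component gives you only $\Pi u_0=\sum_j c_j\phe_j$, which you already had. To conclude that $\Pi u_0$ is rational you would need to know that the eigenfunctions $\phe_j$ themselves are rational --- but for a generic $u_0$ they are not, and proving they are under the hypothesis $\Pi u_0\in\mathrm{span}(\phe_j)$ is precisely the nontrivial content of the step you are trying to supply. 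Your partial-fraction argument presupposes the conclusion.

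The paper does not attempt to prove this step from scratch either: it simply invokes Theorem~4.8 of \cite{Sun2020}, which is exactly the statement that $\Pi u_0$ lying in the span of the eigenfunctions of $L_{u_0}$ characterizes $N$-soliton profiles. You should cite that result (or reproduce Sun's argument) rather than appeal to Theorem~\ref{thm:explicit}.
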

Let us briefly indicate how to deduce Corollary \ref{cor:special} from Theorem \ref{thm:main}. The first statement is a consequence of the already--quoted results of \cite{Wu16a}, namely that, if $u_0\in L^\infty \cap L^2$ and $xu_0\in L^2$,  $L_{u_0}$ has no eigenvalue if and only if it is a positive operator. The particular case $u_0\leq 0$ is then obvious.
As for the second statement, we observe that the distorted Plancherel  formula \eqref{Planchereldist} applied to $f=\Pi u_0$ implies that $u_\infty ^\pm=0$ if and only if $\Pi u_0$ is a linear combination of eigenfunctions of $L_{u_0}$. Then the conclusion is a direct consequence of Theorem 4.8 of \cite{Sun2020}, where Theorem \ref{thm:main} is also proved in the particular case of multi--solitons.
\begin{remark}\
Notice that, with the notation of ~\cite{BlackstoneGGM24a}, see in particular Remark 15, we have $ \hat u_\infty ^+(\lambda)=\alpha (\lambda)$ and $ \hat u_\infty ^-(\lambda)=-\ii \beta (\lambda)$. In  view  of ~\cite[Proposition 30]{BlackstoneGGM24a}, we observe that there exists a function $\ell (\lambda )=\ell_+(m_-,\lambda)$ of modulus $1$ such that 
\begin{equation}\label{scattering}
 \forall \lambda >0, 
 \quad
 \hat u_\infty ^+(\lambda)=\ell (\lambda) \hat u_\infty ^-(\lambda),
 \end{equation}
so that the scattering operator is a unitary Fourier multiplier.
\end{remark}

\section{Strategy of proof}\label{strategy}
In this section, we sketch the proof of Theorem~\ref{thm:main}. We shall make use of the explicit formula for $u$ from~\cite{Gerard22}.

\begin{theorem}[Explicit formula\cite{Gerard22}]\label{thm:explicit}
The solution $u\in\mathcal{C}(\R,H^1(\R))$ of the Benjamin-Ono equation~\eqref{BOinit} with initial datum $u(0)=u_0\in H^1(\R)$ is given by $u(t,x)=\Pi u(t,x)+\overline{\Pi u(t,x)}$, with
\begin{equation}\label{explicit}
\forall t\in \R,\  \forall z\in \C_+,\quad
  \Pi u(t,z)=\frac{1}{2\ii\pi}I_+(f_{t,z}),\quad f_{t,z}:=(X^*-2tL_{u_0}-z\,\mathrm{Id})^{-1}\Pi u_0.
\end{equation}
\end{theorem}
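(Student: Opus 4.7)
My plan is to define $F(t,z):=\frac{1}{2\ii\pi}I_+((X^*-2tL_{u_0}-z)^{-1}\Pi u_0)$, verify that $F(0,z)=\Pi u_0(z)$, and then identify $F(t,z)$ with $\Pi u(t)(z)$ via the Lax-pair structure; uniqueness for the Cauchy problem in $\mathcal{C}(\R,H^1(\R))$ then identifies $F(t,x+\ii 0)+\overline{F(t,x-\ii 0)}$ with $u(t,x)$.

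\textbf{Initial condition.} For $f\in L^2_+(\R)$ and $z\in\C_+$, solving the ODE $\ii\partial_\xi y(\xi)-zy(\xi)=\widehat f(\xi)$ on $(0,\infty)$ with decay at $+\infty$ (guaranteed by $\mathrm{Im}\,z>0$) gives $y(\xi)=\ii\ee^{-\ii z\xi}\int_\xi^\infty\widehat f(\eta)\ee^{\ii z\eta}\dd\eta$, whence $I_+((X^*-z)^{-1}f)=y(0^+)=2\ii\pi f(z)$ by the Fourier representation of Hardy functions. Applied to $f=\Pi u_0$, this yields $F(0,z)=\Pi u_0(z)$, matching the left-hand side of \eqref{explicit} at $t=0$.

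\textbf{Conjugation via the Lax pair.} Let $B_u$ be the skew-adjoint operator in the Lax pair $\partial_tL_{u(t)}=[B_{u(t)},L_{u(t)}]$ of the Benjamin--Ono equation on $L^2_+(\R)$, and let $U(t)$ be the associated unitary propagator, so that $L_{u(t)}=U(t)L_{u_0}U(t)^*$. The algebraic heart of the argument is the commutator identity
\[
[B_u,X^*]=2L_u,
\]
which I would verify from the explicit form of $B_u$: the principal-symbol contribution $[\ii D^2,X^*]=2D$ (using $[D,X^*]=-\ii$) accounts for the $2D$ summand of $L_u=D-T_u$, while a Fourier-side computation of the commutator of the Toeplitz correction in $B_u$ with $X^*$ supplies the $-2T_u$ summand (after cancellation of boundary terms issued from the Hardy projection). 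Setting $Y(t):=U(t)^*X^*U(t)$ and using $U(t)^*L_{u(t)}U(t)=L_{u_0}$, this commutator identity yields $\dot Y(t)=-2L_{u_0}$, so integration produces the conjugation identity $U(t)^*X^*U(t)=X^*-2tL_{u_0}$, and consequently
\[
(X^*-2tL_{u_0}-z)^{-1}=U(t)^*(X^*-z)^{-1}U(t).
\]

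\textbf{Closing the argument and main obstacle.} Substituting, $F(t,z)=\frac{1}{2\ii\pi}I_+\bigl(U(t)^*(X^*-z)^{-1}U(t)\Pi u_0\bigr)$. I would then establish the transport identity $U(t)\Pi u_0=\Pi u(t)$ by extending the action of $L_u$, $B_u$, and $U(t)$ to the constant function $1\notin L^2_+(\R)$, e.g.\ through an approximation $1\approx\ii\varepsilon(D-\ii\varepsilon)^{-1}1$ on a rigged version of $L^2_+$. In this extension $L_u\cdot 1=-\Pi u$ and $U(t)\cdot 1=1$ (after adjusting $B_u$ within its Lax-pair equivalence class so that $B_u\cdot 1=0$ in the rigged sense), and propagating $L_{u_0}\cdot 1=-\Pi u_0$ under $L_{u(t)}=U(t)L_{u_0}U(t)^*$ gives the claim. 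Combined with the dual identity $I_+\circ U(t)^*=I_+$ on the relevant image and the Cauchy formula applied at time $t$, we conclude $F(t,z)=\Pi u(t,z)$. The principal technical obstacles are: (i) making the commutator identity $[B_u,X^*]=2L_u$ and the subsequent conjugation rigorous on a common dense invariant domain, given that $B_u$, $X^*$, and $L_u$ are all unbounded; (ii) establishing spectral location in $\R$ for $X^*-2tL_{u_0}$ so that its resolvent at $z\in\C_+$ is bounded on $L^2_+(\R)$; and (iii) justifying the rigged extension that yields $U(t)\cdot 1=1$ and $\Pi u(t)=U(t)\Pi u_0$ despite $1\notin L^2_+(\R)$. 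The conceptual backbone is the Lax-pair conjugation identity, which reduces the time-$t$ resolvent problem to the Cauchy representation of a Hardy function at time $0$.
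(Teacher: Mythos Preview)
The paper does not itself prove this theorem; it is quoted from \cite{Gerard22}, and the only trace of the original argument appears in the proof of Lemma~\ref{L2boundexplicit}, which records that the map $\Omega_t$ defined by the right-hand side of \eqref{explicit} equals $U(t)\ee^{\ii tL_{u_0}^2}$, where $U(t)$ is generated by $B_u=\ii(T_{|D|u}-T_u^2)$.

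Your scheme is essentially this argument, but one point needs sharpening. The Lax operator $B_u=\ii(T_{|D|u}-T_u^2)$ specified in the paper is \emph{bounded} (for $u\in H^2$) and carries no $\ii D^2$ principal part; with that choice, using $X^*T_bf=T_{xb}f$ one finds $[T_b,X^*]f=\tfrac{I_+(f)}{2\ii\pi}\Pi b$, so $[B_u,X^*]$ is a finite-rank perturbation of $0$, not $2L_u$. The identity $[\,\cdot\,,X^*]=2L_u$ you invoke holds instead for the gauge-equivalent choice
\[
\tilde B_u:=B_u+\ii L_u^2=\ii\bigl(D^2-DT_u-T_uD+T_{|D|u}\bigr),
\]
for which the rank-one pieces from $DT_u$, $T_uD$, and $T_{|D|u}$ cancel exactly and one does obtain $[\tilde B_u,X^*]=2L_u$. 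The propagator generated by $\tilde B_{u(t)}$ is precisely $\tilde U(t)=U(t)\ee^{\ii tL_{u_0}^2}=\Omega_t$. With this identification, your conjugation $\tilde U(t)^*X^*\tilde U(t)=X^*-2tL_{u_0}$, your transport $\tilde U(t)\Pi u_0=\Pi u(t)$ (via $\tilde B_u\cdot 1=0$ and $L_u\cdot 1=-\Pi u$), and the dual invariance $I_+\circ\tilde U(t)^*=I_+$ are all correct, and the proof coincides with the one in \cite{Gerard22}. Your remark about ``adjusting $B_u$ within its Lax-pair equivalence class'' is therefore not optional: it is the step that produces the extra $\ee^{\ii tL_{u_0}^2}$ factor visible in Lemma~\ref{L2boundexplicit}.
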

\begin{remark}\label{invertibility}
Note that the existence of $(X^*-2tL_{u_0}-z\,\mathrm{Id})^{-1}$ for every $z\in \C_+$ comes from the maximal dissipativity of the operator $X^*-2tL_{u_0}$, see the end of Section 3 of \cite{Gerard22}. A priori the vector $f_{t,z}$ belongs to the domain of $X^*-2tL_{u_0}$. We will see in Section \ref{sec:estimates} that, due to the properties of $\Pi u_0$, it belongs to the domain of $L_{u_0}$ and to the domain of $X^*$.
\end{remark}
\subsection{Reduction to the case $t\to +\infty$}
We observe that, if $u$ is the solution of \eqref{BOinit}, and if $u^* (t,x):=u(-t,-x)$, then $u^*$ still satisfies the Benjamin--Ono equation with initial datum $u^* _0(x):=u_0(-x)$. Notice that, since $u_0$ is real valued, $\hat u^*_0(\xi )=\overline{\hat u_0(\xi)}$ for every $\xi \in \R$. As a consequence, if $\phe_j$ is an eigenfunction of $L_{u_0}$ associated to the eigenvalue $\lambda_j$, the function $\phe_j^* \in H^1_+(\R)$ defined by 
\[ \widehat {\phe_j^*}(\xi)=\overline{\phe_j(\xi)}\]
is an eigenfunction of $L_{u_0^*}$ with the same eigenvalue, and 
\[ \la X^*\phe_j^* ,\phe_j^*\ra =-\overline {\la X^*\phe_j,\phe_j\ra}.\]
Similarly, if the generalized eigenfunctions $m_\pm $ are associated to $u_0$ by Definition \ref{def:m}, then the  generalized eigenfunctions  $m^*_\pm $ associated to $u_0^*$ are given by
\[ m_\pm ^* (x,\lambda)=\overline{m_\mp (-x,\lambda)}.\]
Consequently, the scattering profiles $u_\infty ^{*,\pm} $ associated to $u_0^*$ by Definition \ref{def:m} are given by 
\[ \forall \lambda >0, 
\quad
\hat u_\infty^{*,\pm}(\lambda)=\int_\R \Pi u_0^* (x)\overline {m_\mp^* (x,\lambda)}\dd x=\int_\R \overline{\Pi u_0 (-x)}m_\pm (-x,\lambda)\dd x
=\overline{\hat u_\infty ^\mp (\lambda)},\]
or equivalently $u_\infty^{ *,\pm}(x)=u_\infty^\mp(-x)$. Hence the statement of Theorem \ref{thm:main} for $t\to +\infty$ and $u^*$ is exactly the statement of Theorem \ref{thm:main} for $t\to -\infty$ and $u$. \\
Therefore we may restrict ourselves to the case $t\to +\infty$, which we shall do from now on, and we will simplify the notation by setting \[\tilde f(\lambda):=\tilde f^-(\lambda),\  u_\infty :=u_\infty ^+,\  r:=r^\pm. \]

\subsection{Sufficient conditions based on weak convergence}

 By a functional--analytic argument using the distorted Plancherel formula \eqref{Planchereldist}, we are reduced to establishing the following two limits.
 
\begin{lemma}[Sufficient conditions for Theorem~\ref{thm:main}]
Theorem~\ref{thm:main} holds if  the following two properties are satisfied.
\begin{itemize}
\item Soliton limit. For every $j\in \{1,\dots, N\}$, for every $z\in \C_+$,
\begin{equation}\label{soliton} \Pi u(t,z-2t\lambda_j)\td_t,\infty \frac{\ii}{z-\langle X^*\phe_j,\phe_j\rangle }. \end{equation}
\item Radiation limit. For every $z\in \C_+$ with $|z|$ small enough, 
\begin{equation}\label{scat} (2t)^{1/2}\mathrm{e}^{\ii t\lambda^2}\Pi u(t,z-2t\lambda)
	\tdw_t,{+\infty} \frac{\mathrm{e}^{\ii\pi/4}}{\sqrt{2\pi}}\mathrm{e}^{\ii\lambda z}\widetilde{\Pi u_0}({\lambda}), \end{equation}
in the sense of weak convergence in $L^2(0,+\infty )$ in the $\lambda$ variable.
\end{itemize}
\end{lemma}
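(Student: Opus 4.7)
The plan is to deduce the $H^1$ convergence $r(t)\to 0$ from the two stated limits by an energy-identity argument. Set
\[
v(t,x):=\sum_{j=1}^N R_{p_j}(x-c_{p_j}t)+\mathrm{e}^{t\partial_x|D_x|}u_\infty(x),
\]
so that $r=u-v$ and
\[
\|r(t)\|_{L^2}^2=\|u(t)\|_{L^2}^2+\|v(t)\|_{L^2}^2-2\operatorname{Re}\langle u(t),v(t)\rangle.
\]
$L^2$ conservation for the Benjamin--Ono flow gives $\|u(t)\|_{L^2}=\|u_0\|_{L^2}$. The distorted Plancherel identity~\eqref{Planchereldist} applied to $f=\Pi u_0$, together with Wu's identities~\eqref{eq:Wu} (which yield $|\langle\Pi u_0,\phe_j\rangle|^2=-2\pi\lambda_j$) and the explicit computation $\|R_p\|_{L^2}^2=2\pi/\operatorname{Im}p$, then produces the crucial identity
\[
\|u_0\|_{L^2}^2=\sum_{j=1}^N\|R_{p_j}\|_{L^2}^2+\|u_\infty\|_{L^2}^2.
\]
It will therefore suffice to prove (a) $\|v(t)\|_{L^2}^2\to\sum_j\|R_{p_j}\|^2+\|u_\infty\|^2$ and (b) $\langle u(t),v(t)\rangle\to\sum_j\|R_{p_j}\|^2+\|u_\infty\|^2$; the first expresses asymptotic $L^2$-orthogonality of the three ingredients of $v$, and the second identifies their asymptotic pairing with $u(t)$.

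Statement (a) is standard. Soliton/soliton crossings $\langle R_{p_j}(\cdot-c_{p_j}t),R_{p_k}(\cdot-c_{p_k}t)\rangle$ with $j\neq k$ vanish by dominated convergence because $|c_{p_j}-c_{p_k}|t\to\infty$ separates the two rational profiles. The soliton/radiation crossings vanish because the Hardy part of the linear Benjamin--Ono propagator is weakly $L^2$-dispersive in any translated frame: the group velocity of positive-frequency waves is $-2\xi\leq 0$, so in the soliton frame $x=y+c_{p_j}t$ moving to the right at speed $c_{p_j}>0$ one has $\Pi\mathrm{e}^{t\partial_x|D|}u_\infty(\cdot+c_{p_j}t)\rightharpoonup 0$ in $L^2$.

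For (b), I project onto the Hardy space using $\langle f,g\rangle_{L^2(\R)}=2\operatorname{Re}\langle\Pi f,\Pi g\rangle$ for real $f,g$, and treat the soliton and radiation pairings separately. For the soliton pairing, the soliton limit~\eqref{soliton} says that $U_j(t,z):=\Pi u(t,z-2t\lambda_j)\to \ii/(z+p_j)=\Pi R_{p_j}(z)$ pointwise on $\mathbb{C}_+$; combined with the uniform bound $\|U_j(t,\cdot)\|_{L^2}\leq\|\Pi u_0\|_{L^2}$, a Montel-type argument (any $L^2$-weak limit of Hardy functions is determined by its holomorphic extension, so must equal $\Pi R_{p_j}$) promotes this to weak convergence $U_j(t,\cdot)\rightharpoonup \Pi R_{p_j}$ in $L^2_+$. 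Translating the inner product,
\[
\langle \Pi u(t),\Pi R_{p_j}(\cdot-c_{p_j}t)\rangle=\langle U_j(t,\cdot),\Pi R_{p_j}\rangle\longrightarrow\|\Pi R_{p_j}\|^2,
\]
so $\langle u(t),R_{p_j}(\cdot-c_{p_j}t)\rangle\to\|R_{p_j}\|^2$. For the radiation pairing, I apply stationary phase to $\Pi\mathrm{e}^{t\partial_x|D|}u_\infty(-2t\lambda)\sim\tfrac{\mathrm{e}^{\ii\pi/4}}{\sqrt{4\pi t}}\mathrm{e}^{-\ii t\lambda^2}\widehat{u_\infty}(\lambda)$ valid for $\lambda>0$, change variables $x=-2t\lambda$ in $\langle \Pi u(t),\mathrm{e}^{t\partial_x|D|}\Pi u_\infty\rangle$, and pair the result with the weak $L^2_\lambda$-limit~\eqref{scat} (at a small $z\in\mathbb{C}_+$ that can then be sent to the boundary because both factors extend holomorphically), obtaining
\[
\langle \Pi u(t),\mathrm{e}^{t\partial_x|D|}\Pi u_\infty\rangle\longrightarrow \frac{1}{2\pi}\int_0^\infty|\widehat{u_\infty}(\lambda)|^2\dd\lambda=\|\Pi u_\infty\|^2,
\]
hence $\langle u(t),\mathrm{e}^{t\partial_x|D|}u_\infty\rangle\to\|u_\infty\|^2$. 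Adding these limits yields (b) and thus $\|r(t)\|_{L^2}\to 0$.

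The $H^1$ statement is obtained by repeating the same scheme at the derivative level: $\partial_x u(t)$ is bounded in $L^2$ uniformly in $t$ by the conservation of the higher Benjamin--Ono energy, and a weighted distorted Plancherel identity (inserting a factor $\lambda$ on the spectral side) together with the already-established weak convergences --- differentiation in $x$ multiplies the Hardy-side limits by $\ii\xi$ --- gives norm convergence of $\partial_x r(t)$ as well. The main obstacle is the radiation pairing: controlling the contribution of $x>0$, where $\mathrm{e}^{t\partial_x|D|}\Pi u_\infty$ is small but not $O(t^{-1/2})$, and justifying the passage from the weak $L^2_\lambda$-limit~\eqref{scat} at complex $z$ near zero to the actual integral on the real axis, require a careful non-stationary-phase analysis and good control of $\widehat{u_\infty}$ near $\lambda=0$, both ultimately coming from the hypotheses $u_0\in H^1(\R)$ and $xu_0\in H^1(\R)$.
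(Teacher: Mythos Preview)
Your $L^2$ argument is essentially the paper's: expand $\|\Pi r(t)\|^2$, use asymptotic orthogonality of the pieces of $v$, deduce the two cross--pairings from the soliton and radiation limits, and close with the distorted Plancherel identity applied to $\Pi u_0$. The paper also makes explicit the passage from \eqref{scat} at $z\in\C_+$ to $z=0$: setting $g_t(y):=\int_0^\infty F_{t,\ii y}(\lambda)\phe(\lambda)\dd\lambda$ for a test function $\phe$, the $H^1$ conservation law bounds $\partial_z F_{t,z}$ in $L^2$, so $g_t'$ is uniformly bounded and $g_t(y)\to g_\infty(y)$ extends to $y=0^+$ by equicontinuity. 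Your appeal to ``holomorphic extension'' is not the right mechanism; equicontinuity in $\mathrm{Im}\,z$ is what carries the weak limit to the boundary.

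The genuine gap is in the $H^1$ step. ``Repeating the same scheme at the derivative level'' cannot close, because $\|\partial_x u(t)\|_{L^2}^2$ is \emph{not} a conserved quantity. What is conserved is $\|L_{u(t)}\Pi u(t)\|_{L^2}^2$, which differs from $\|D\Pi u(t)\|^2$ by the nonlinear Toeplitz term $T_{u(t)}\Pi u(t)$, and the counting simply does not match: the distorted Plancherel identity for $L_{u_0}\Pi u_0$ yields $\sum_j 2\pi|\lambda_j|^3$ on the discrete side, whereas $\sum_j\|\partial_x q_j\|^2=\sum_j 4\pi|\lambda_j|^3$. The paper's cure uses the $L^2$ convergence already obtained: interpolation gives $r(t)\to 0$ in $L^4$, and dispersion gives $\ee^{-\ii t\partial_x^2}\Pi u_\infty\to 0$ in $L^4$, so with $B(v,w):=T_v\Pi w$ (continuous $L^4\times L^4\to L^2$) and $R(t):=\sum_j R_{p_j}(\diamond-c_{p_j}t)$ one gets $B(u(t),u(t))=B(R(t),R(t))+o_{L^2}(1)$. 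Expanding $\|L_{u(t)}\Pi u(t)\|^2$ with this substitution, using the separation of the solitons, and recognising $\|L_{R_{p_j}}q_j\|^2=\lambda_j^2\|q_j\|^2=2\pi|\lambda_j|^3$, one arrives at
\[
\|L_{u(t)}\Pi u(t)\|^2=\|L_{u_0}\Pi u_0\|^2+\|\partial_x\Pi r(t)\|^2+o(1),
\]
and the conservation law \eqref{eq:conslawH1} forces $\|\partial_x\Pi r(t)\|\to 0$. This nonlinear cancellation is the missing ingredient in your sketch.
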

\begin{proof} {\bf First step. Reformulation of the radiation limit.} Assume  the radiation limit \eqref{scat} holds for $\mathrm{Im}z$  small enough. Then we claim that it also holds for $z=0$. Indeed, notice that, since $\Pi u(t,.)$ is bounded in $L^2$, an elementary change of variable implies that, for every $z\in \C_+$, 
\begin{equation}\label{def:Ftz}
 F_{t,z}(\lambda):=(2t)^{1/2}\mathrm{e}^{\ii t\lambda^2}\Pi u(t,z-2t\lambda)
 \end{equation} is bounded in $L^2(\R)$. Moreover, since $\partial_z\Pi u(z)=\Pi (\partial_xu)(z)$, the $H^1$ conservation law implies that $\partial_zF_{t,z}(\lambda)$ is also bounded in $L^2(\R)$. Assume that \eqref{scat} holds for $0<\mathrm{Im}z\leq \delta $, fix $\phe $ a test function on $(0,\infty )$, and set, for $y\in (0,\delta ]$,
\begin{equation}\label{def:gt}
g_t(y):=\int_0^\infty F_{t,\ii y}(\lambda)\phe (\lambda)\dd \lambda ,\  g_{\infty} (y):=\frac{\mathrm{e}^{\ii\pi/4}}{\sqrt{2\pi}}\int_0^\infty \mathrm{e}^{-\lambda y}\widetilde{\Pi u_0}({\lambda})\phe (\lambda)\dd\lambda.
\end{equation}
Then we have $g_t(y)\to g_{\infty} (y)$ as $t\to +\infty $ for every $y\in (0,\delta ]$, and $g'_t(y)$ is uniformly bounded on $(0,\delta ]$ as $t\to \infty$. Consequently,
the convergence of $g_t$ to $g_{\infty}$ is uniform, thus $g_t(0^+)\to g_{ \infty} (0^+)$ as $t\to + \infty$, which, since $\phe $ is arbitrary, exactly means \eqref{scat} for $z=0$, namely
\begin{equation}\label{scat2}
(2t)^{1/2}\mathrm{e}^{\ii t\lambda^2}\Pi u(t,-2t\lambda)
	\tdw_t,{+\infty} \frac{\mathrm{e}^{\ii\pi/4}}{\sqrt{2\pi}}\widetilde{\Pi u_0}({\lambda}), \end{equation}
in the sense of weak convergence in $L^2(0,+\infty )$ in the $\lambda$ variable. At this stage we can reformulate \eqref{scat2} as
\begin{equation}\label{scat3}
\ee^{it\partial_x^2}\Pi u (t,\diamond)\tdw_t,{+\infty}\Pi u_\infty.
\end{equation}
Indeed, it is well known that, for every $f\in L^2(\R)$,
\[ \ee^{-\ii t\partial_x^2}f=\ee^{-\ii \frac{x^2}{4t}}\frac{\ee^{ \ii \pi/4}}{\sqrt{4\pi t}}\hat f\left (\frac{-x}{2t}\right )+o(1),\]
as $t\to + \infty$, in $L^2(\R)$, so that, for every $f\in L^2_+(\R)$, 
\begin{align*}
\la  \ee^{\ii t\partial_x^2}\Pi u(t,\diamond),f\ra &= (2t)^{1/2}\frac{\ee^{- \ii \pi/4}}{\sqrt{2\pi }}\int_0^\infty \Pi u(t,-2t\lambda)\ee^{\ii t\lambda^2}\overline{\hat f(\lambda)}\dd\lambda +o(1)\\
&\td_t,{+\infty} \frac{1}{2\pi} \int_0^\infty \hat u_\infty (\lambda)\overline{\hat f(\lambda)}\dd\lambda =\la \Pi u_\infty, f\ra , 
\end{align*}
in view of \eqref{scat2}.
\begin{remark}\label{Hsequicontinuity}
Notice that the above step can be made under a weaker assumption on $u_0$, say $u_0\in H^s$ for some $s>0$. Indeed, in that case the conservation laws imply that $\Pi u(t,\diamond)$ is uniformly bounded in $H^s(\R )$, so that, if $0<y<y'\leq \delta$,
\[ \int_\R |F_{t,\ii y}(\lambda)-F_{t,\ii y'}(\lambda)|^2\dd\lambda \lesssim \int_0^\infty (\mathrm{e}^{-y\xi }-\mathrm{e}^{-y'\xi})^2|\hat u(t,\xi )|^2\dd\xi \lesssim (y'-y)^{2s}.\]
Hence the family $g_t$ defined in \eqref{def:gt} is uniformly H\"older continuous, and $g_t(0^+)\to g_\infty (0^+),$ whence \eqref{scat2}.
\end{remark}
{\bf Second step. Strong convergence in $L^2$.} Next, if $r=r^+$ is the function defined in Theorem \ref{thm:main},  we observe that
\[ \Pi r(t,x)=\Pi u(t,x)-\sum_{j=1}^N q_j(x+2\lambda_jt)-\ee^{-\ii t\partial_x^2}\Pi u_\infty (x),\ q_j(x):=\frac{\ii}{x-\la X^*\phe_j,\phe_j\ra }.\]
Let us expand the quantity $\Vert \Pi r(t,\diamond )\Vert_{L^2}^2=\la \Pi r(t,\diamond ), \Pi r(t,\diamond )\ra $. 

Since $\lambda _j\ne \lambda_k$ of $j\ne k$, it is easy to check that
\begin{equation}\label{qjk}
\int_\R q_j(x+2\lambda_jt)\overline q_k(x+2\lambda_kt)\dd x \td_t,{+\infty} 0.
\end{equation}
Furthermore, since $q_j\in L^p$ for every $p>1$, the dispersion inequality for the Schr\"odinger group  implies that $\ee ^{\ii t\partial_x^2}q_j$ tends to $0$ in $L^q$ for every $q<\infty $, hence 
$\ee ^{\ii t\partial_x^2}q_j(\diamond +2\lambda_jt )$ tends weakly to $0$ in $L^2(\R)$. Consequently, 
\begin{equation}\label{qr}
\la q_j(\diamond +2\lambda_jt),\ee^{-\ii t\partial_x^2}\Pi u_\infty (x) \ra  \td_t,{+\infty} 0.
\end{equation}
Next we look at the inner products of $\Pi u(t,\diamond )$ with the other terms. We have
\begin{equation}\label{uq}
\la \Pi u(t,\diamond), q_j(\diamond +2\lambda_jt)\ra =\la \Pi u(t,\diamond -2\lambda_jt) ,q_j\ra \td_t,{+\infty} \la q_j,q_j\ra =2\pi |\lambda_j| ,
\end{equation}
in view of \eqref{soliton} and of \eqref{Imp}.
Furthermore, in view of \eqref{scat3}, we have 
 \begin{equation}\label{uuinfty}
\la \Pi u(t,\diamond), \ee^{-\ii t\partial_x^2}\Pi u_\infty\ra   \td_t,{+\infty}\frac{1}{2\pi}\Vert \widetilde {\Pi u_0}\Vert _{L^2}^2.
\end{equation}
Summing up, in view of \eqref{qjk}, \eqref{qr}, \eqref{uq}, \eqref{uuinfty}, and of the $L^2$ conservation law for the Benjamin--Ono evolution,
we obtain, as $t\to +\infty $,
\begin{align*}
\Vert \Pi r(t,\diamond)\Vert_{L^2}^2&=\Vert \Pi u(t,\diamond)\Vert_{L^2}^2-\sum_{j=1}^N 2\pi |\lambda_j|-\frac{1}{2\pi}\Vert \widetilde {\Pi u_0}\Vert _{L^2}^2+o(1)\\
&=\Vert \Pi u_0\Vert_{L^2}^2-\sum_{j=1}^N 2\pi |\lambda_j|-\frac{1}{2\pi}\Vert \widetilde {\Pi u_0}\Vert _{L^2}^2+o(1)\\
&=o(1),
\end{align*}
where we have applied \eqref{Planchereldist} to $f=\Pi u_0$ and \eqref{eq:Wu} in the last identity. Since $\Vert r(t,\diamond)\Vert_{L^2}^2=2\Vert \Pi r(t,\diamond)\Vert_{L^2}^2, $ we have proved
\begin{equation}\label{cancelrfirst}
 \Vert r(t,\diamond)\Vert_{L^2}^2\td_t,{+\infty} 0.
 \end{equation}
{\bf Third step. Strong convergence in $H^1$.} Next we come to the proof of 
\[ \Vert \partial_x r(t,\diamond)\Vert_{L^2}^2\td_t,{+\infty} 0,\]
which we shall establish equivalently  as 
\begin{equation}\label{eq:cancelrprime}
\Vert \partial_x \Pi r(t,\diamond)\Vert_{L^2}^2\td_t,{+\infty} 0.
\end{equation}
We shall appeal to the conservation law
\begin{equation}\label{eq:conslawH1}
\Vert  L_{u(t,\diamond)}\Pi u(t,\diamond)\Vert_{L^2}^2=\Vert L_{u_0}\Pi u_0\Vert_{L^2}^2,
\end{equation}
which implies that $u(t,\diamond )$ is bounded in $H^1(\R)$. Furthermore, since 
\[ \widetilde{L_{u_0}\Pi u_0}(\lambda )=\lambda \widetilde{\Pi u_0}(\lambda )=\lambda \hat u_\infty (\lambda ), \]
we infer that $u_\infty \in H^1(\R)$, so that $r(t,\diamond )$ and $\Pi r(t,\diamond )$ are bounded in $H^1(\R)$. In view of \eqref{cancelrfirst}, we infer by interpolation that $r(t,\diamond )\to 0$ in $H^s(\R)$ strongly for every $s<1$, and that $r(t,\diamond )\rightharpoonup 0$ weakly in $H^1(\R)$.

Then we proceed with $\partial_xr(t,\diamond )$ as we did for $r(t,\diamond)$ in the previous part of the proof. Using the same chain of arguments, one gets, as $t\to +\infty$,
\begin{equation}\label{expandrprime}
\Vert \partial_x\Pi r(t,\diamond)\Vert_{L^2}^2=\Vert \partial_x\Pi u(t,\diamond)\Vert_{L^2}^2-\sum_{j=1}^N \Vert \partial_xq_j\Vert_{L^2}^2-\Vert \partial_x\Pi u_\infty\Vert_{L^2}^2+o(1).
\end{equation}
Next we transform the right hand side of \eqref{expandrprime} as follows. Since $\Pi $ is bounded on $L^p$ for every $p\in (1,\infty)$ \cite{Stein93}, the bilinear map
\[ B(v,w)=T_v\Pi w\]
is  continuous from $L^4(\R)\times L^4(\R)$ into $L^2(\R)$. On the other hand, by the dispersion inequality applied to the free Schr\"odinger group, we know 
that, for every $v\in H^1(\R)$, for every $p\in (2,\infty ]$,
\begin{equation}\label{dispLp}
 \Vert \mathrm{e}^{-it\partial_x^2}v \Vert_{L^p}\td_t,\infty 0.
 \end{equation}
Indeed, since $\mathrm{e}^{-it\partial_x^2}$ is an isometry of $H^1$, it is uniformly bounded from $H^1$ to $L^p$, and it suffices to check \eqref{dispLp} for $v$ in a dense subspace of $H^1$, say $H^1\cap L^1$, which is trivial since  $\mathrm{e}^{-it\partial_x^2}$ sends $L^1$ into $L^\infty$ with a norm $O(t^{-1/2})$. Consequently, if we set
\[ \tilde r(t,x):=r(t,x)+\mathrm{e}^{t\partial_xD}u_\infty (x),\]
we have $\Vert \tilde r(t,\diamond )\Vert _{L^p}\to 0$ and hence, setting
\[ R(t,x):=\sum_{j=1}^N R_{p_j}(x+2\lambda_jt)=u(t,x)-\tilde r(t,x),\]
we have \[ B(u(t,\diamond ), u(t,\diamond ))=B(R(t,\diamond ),R(t,\diamond ))+o(1),\]
in $L^2$ as $t\to +\infty$. Hence 
\begin{align*}
\Vert L_{u(t,\diamond)}\Pi u(t,\diamond)\Vert_{L^2}^2=&\Vert D\Pi u(t,\diamond )-B(u(t,\diamond ),u(t,\diamond))\Vert_{L^2}^2\\
=&\Vert D\Pi u(t,\diamond )-B(R(t,\diamond ),R(t,\diamond))\Vert_{L^2}^2+o(1)\\
=&\Vert D\Pi u(t,\diamond )\Vert_{L^2}^2-2\mathrm{Re}\la D\Pi u(t,\diamond ), B(R(t,\diamond ),R(t,\diamond))\ra +\\
&+\Vert B(R(t,\diamond ),R(t,\diamond))\Vert_{L^2}^2+o(1).
\end{align*}
Next we observe that
\begin{align*} \la D\Pi u(t,\diamond ), B(R(t,\diamond ),R(t,\diamond))\ra =&\la D\Pi R(t,\diamond ), B(R(t,\diamond ),R(t,\diamond))\ra +\la \Pi \tilde r(t,\diamond ), DB(R(t,\diamond ),R(t,\diamond))\ra \\
=&\la D\Pi R(t,\diamond ), B(R(t,\diamond ),R(t,\diamond))\ra +o(1),
\end{align*}
because $DB(R(t),R(t))$ is bounded in $L^q(\R)$ for every $q>1$ on the one hand, and because $\Pi \tilde r(t,\diamond )$ tends to $0$ in $L^p$ for every $p>2$ on the other hand.  This leads to 
\begin{align*}
\Vert L_{u(t,\diamond)}\Pi u(t,\diamond)\Vert_{L^2}^2
	=&\Vert D\Pi u(t,\diamond )\Vert_{L^2}^2-2\mathrm{Re}\la D\Pi R(t,\diamond ), B(R(t,\diamond ),R(t,\diamond))\ra +\\
&+\Vert B(R(t,\diamond ),R(t,\diamond))\Vert_{L^2}^2+o(1)\\
	=&\Vert \partial_x\Pi u(t,\diamond )\Vert_{L^2}^2+\sum_{j=1}^N[-2\mathrm{Re}\la D\Pi R_{p_j}, B(R_{p_j},R_{p_j})\ra +\Vert B(R_{p_j},R_{p_j})\Vert_{L^2}^2]+o(1),
\end{align*}
 since the pairwise interactions of the terms $R_{p_j}(\diamond+2\lambda_jt)$ cancel at the first order in view of  $\lambda_j\ne \lambda_k$.
  
  In the latter identity, let us replace $\Vert \partial_x\Pi u(t,\diamond )\Vert_{L^2}^2$ by its expression provided by \eqref{expandrprime}. We obtain
 \begin{align*}
\Vert L_{u(t,\diamond)}\Pi u(t,\diamond)\Vert_{L^2}^2&=\sum_{j=1}^N\Vert L_{R_{p_j}}q_j\Vert_{L^2}^2+\Vert \partial_x\Pi u_\infty\Vert_{L^2}^2+\Vert \partial_x\Pi r(t,\diamond)\Vert_{L^2}^2+o(1)\\
&=\sum_{j=1}^N2\pi |\lambda_j|^3+\Vert \partial_x\Pi u_\infty\Vert_{L^2}^2+\Vert \partial_x\Pi r(t,\diamond)\Vert_{L^2}^2+o(1),
\end{align*}
because $L_{R_{p_j}}q_j=\lambda_j q_j$ and $\Vert q_j\Vert_{L^2}^2=2\pi |\lambda_j|.$ Applying the distorted Plancherel formula \eqref{Planchereldist} to $f=L_{u_0}\Pi u_0$, we finally infer 
\begin{equation}\label{energybalance}
\Vert L_{u(t,\diamond)}\Pi u(t,\diamond)\Vert_{L^2}^2=\Vert L_{u_0}\Pi u_0\Vert_{L^2}^2+\Vert \partial_x \Pi r(t,\diamond)\Vert_{L^2}^2+o(1),
\end{equation}
which, in view of \eqref{eq:conslawH1}, leads to \eqref{eq:cancelrprime} and completes the proof.
\end{proof}
\subsection{Soliton limit}
In Section \ref{sec:solitonlimit},  will obtain the soliton limit \eqref{soliton} by a direct application of the explicit formula \eqref{explicit}  combined with considerations on the action of the operator $X^*$ on the eigenfunctions of $L_{u_0}$.
\subsection{Radiation limit}
In Sections \ref{sec:radiationlimit} - 
\ref{3.9}, the radiation limit will also be obtained through the explicit formula \eqref{explicit}, but the proof will be much more intricate. An important ingredient will be that the $z$-translate 
\[ f_{t,z-2t\lambda}=(X^*-2t(L_{u_0}-\lambda\,\mathrm{Id})-z\,\mathrm{Id})^{-1}\Pi u_0\]
of the function $f_{t,z}$ arising in \eqref{explicit} is such that $(x,\lambda)\mapsto\sqrt {t} f_{t,z-2t\lambda}(x)$ converges weakly to $0$ in $L^2(\R \times \Lambda )$ for any segment $\Lambda \subset (0,\infty )$
(see Section \ref{sec:estimates}). Our proof will use properties of the distorted Fourier transform defined in \eqref{distortedFT}, which we establish in Appendix \ref{sec:distorted}.

\section{The soliton limit}\label{sec:solitonlimit}
In this section, we prove property \eqref{soliton}. Firstly, we give more information on the structure of formula \eqref{explicit}.
\begin{lemma}\label{L2boundexplicit}
Let $f\in L^2_+(\R )$ and $t\in \R$, then the function $\Omega_tf$ defined by
\[
\forall z\in \C_+,\quad
 \Omega_tf(z):=\frac{1}{2\ii\pi}I_+((X^*-2tL_{u_0}-z\,\mathrm{Id})^{-1}f)\]
belongs to $L^2_+(\R)$, and $\| \Omega_tf\|_{L^2}\leq \| f\|_{L^2}$. In particular, for every $z\in\C_+$, we have
\[ |\Omega_tf(z)|\leq \frac{\| f\|_{L^2}}{2\sqrt{\pi \mathrm{Im}z}}. \]
\end{lemma}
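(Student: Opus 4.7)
The strategy is to prove the $L^2$ bound $\|\Omega_t f\|_{L^2}\leq \|f\|_{L^2}$ first, and then derive the pointwise bound as an immediate consequence of the reproducing kernel estimate in the Hardy space $H^2(\C_+)\simeq L^2_+(\R)$. Throughout, set $A:=X^*-2tL_{u_0}$ acting on $L^2_+(\R)$, and $\phi_z:=(A-z)^{-1}f$ for $z\in\C_+$.

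The first step is a dissipativity identity obtained by integration by parts on the Fourier side: starting from $\widehat{X^*f}(\xi)=\ii\hat f'(\xi)$, one gets for $f,g$ in the domain of $X^*$,
\[
\langle X^*f,g\rangle-\langle f,X^*g\rangle=\frac{1}{2\ii\pi}I_+(f)\overline{I_+(g)}.
\]
Setting $f=g$ and recalling that $L_{u_0}$ is self-adjoint on $L^2_+$ yields
\[
\mathrm{Im}\,\langle A\phi,\phi\rangle=-\frac{|I_+(\phi)|^2}{4\pi}\leq 0,
\]
so $A$ is dissipative. Consequently $\C_+\subset\rho(A)$ with $\|(A-z)^{-1}\|\leq 1/\mathrm{Im}\,z$, and $\Omega_t f$ is holomorphic on $\C_+$. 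Taking the imaginary part of the inner product of $(A-z)\phi_z=f$ with $\phi_z$ and applying the dissipativity identity to $\phi_z$ gives the pointwise balance (for $z=x+\ii y$ with $y>0$)
\[
-\mathrm{Im}\,\langle f,\phi_z\rangle=y\|\phi_z\|_{L^2}^2+\frac{|I_+(\phi_z)|^2}{4\pi}.
\]

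To integrate this identity over $x\in\R$, I would use the auxiliary function $H(z):=\langle(A-z)^{-1}f,f\rangle$, which is holomorphic on $\C_+$ with $\mathrm{Im}\,H(z)=-\mathrm{Im}\,\langle f,\phi_z\rangle\geq 0$; that is, $H$ is a Herglotz function. The bound $\|(A-\ii y)^{-1}\|\leq 1/y$ gives $H(\ii y)=O(1/y)$, and the identity $-\ii y\,(A-\ii y)^{-1}=(I-A/(\ii y))^{-1}$ together with the strong convergence $(I-A/(\ii y))^{-1}\to I$ as $y\to\infty$ (using density of $D(A)$ and the uniform bound $\|(I-A/(\ii y))^{-1}\|\leq 1$) yields $\lim_{y\to\infty}(-\ii y)\,H(\ii y)=\|f\|^2_{L^2}$. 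Hence $H$ admits the Herglotz representation $H(z)=\int_\R \dd\sigma(\lambda)/(\lambda-z)$ with a nonnegative measure $\sigma$ of total mass $\|f\|^2_{L^2}$, and Fubini gives $\int_\R\mathrm{Im}\,H(x+\ii y)\,\dd x=\pi\|f\|^2_{L^2}$ for all $y>0$. Integrating the pointwise balance over $x$ and combining,
\[
\pi\|f\|^2_{L^2}=y\int_\R\|\phi_z\|^2_{L^2}\,\dd x+\frac{1}{4\pi}\int_\R|I_+(\phi_z)|^2\,\dd x,
\]
and since both terms on the right are nonnegative, $\int_\R|\Omega_t f(x+\ii y)|^2\,\dd x=(4\pi^2)^{-1}\int_\R|I_+(\phi_z)|^2\,\dd x\leq\|f\|^2_{L^2}$ uniformly in $y>0$.

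Taking the supremum over $y$ identifies $\Omega_t f$ with an element of $H^2(\C_+)\simeq L^2_+(\R)$ of norm at most $\|f\|_{L^2}$; the pointwise bound is then the standard Paley--Wiener / Cauchy--Schwarz estimate $|F(z)|\leq\|F\|_{L^2}/(2\sqrt{\pi\,\mathrm{Im}\,z})$ applied to $F=\Omega_t f$. The main technical hurdle is the rigorous justification of the Herglotz representation and the asymptotic $-\ii y\,H(\ii y)\to\|f\|^2_{L^2}$, which require $A$ to be densely defined and maximal dissipative; both properties follow from $X^*$ and $L_{u_0}$ sharing a common dense core in $L^2_+$ (e.g., inverse Fourier transforms of Schwartz functions compactly supported in $(0,\infty)$) together with the standard theory of dissipative operators applied to the dissipativity estimate above.
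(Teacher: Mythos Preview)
Your argument is correct, but it takes a genuinely different route from the paper's own proof. The paper does not argue via dissipativity and Herglotz representation at all; instead it reopens the derivation of the explicit formula from \cite{Gerard22} and observes that, for $u_0\in H^2(\R)$,
\[
\Omega_t=U(t)\,\mathrm{e}^{\ii tL_{u_0}^2},
\]
where $U(t)$ is the unitary propagator solving $U'(t)=B_{u(t)}U(t)$, $U(0)=\mathrm{Id}$, with $B_u=\ii(T_{|D|u}-T_u^2)$ the second operator of the Benjamin--Ono Lax pair. Since $U(t)$ and $\mathrm{e}^{\ii tL_{u_0}^2}$ are both unitary on $L^2_+(\R)$, one gets immediately the \emph{equality} $\|\Omega_t f\|_{L^2}=\|f\|_{L^2}$ (the general $u_0$ follows by mollification). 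The pointwise bound is then obtained exactly as you do, by Cauchy--Schwarz on the Fourier side.

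Your approach trades this structural identification for a purely functional--analytic argument: the commutation identity for $X^*$ yields $\mathrm{Im}\langle A\phi,\phi\rangle=-|I_+(\phi)|^2/(4\pi)$, and the Herglotz representation of $H(z)=\langle (A-z)^{-1}f,f\rangle$ converts this into the $L^2$ bound after integrating in $\mathrm{Re}\,z$. This is self--contained in the sense that it does not require unpacking the Lax pair or the origin of the explicit formula, and it would apply to any operator of the form $X^*+S$ with $S$ self--adjoint. The price is that you obtain only the inequality $\|\Omega_t f\|_{L^2}\le\|f\|_{L^2}$ rather than equality (which, however, is all that is used downstream), and that you must verify maximal dissipativity of $A=X^*-2tL_{u_0}$ to ensure $\C_+\subset\rho(A)$ and the asymptotic $-\ii y\,H(\ii y)\to\|f\|_{L^2}^2$. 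Your sketch for this last point (common core plus standard dissipative--operator theory) is reasonable but a bit thin: the surjectivity of $A-z$ does not follow from a shared core alone, and one should either invoke directly that $(X^*-2tL_{u_0}-z)^{-1}$ is already known to exist on $L^2_+$ from \cite{Gerard22}, or solve the first--order ODE for $\widehat{\phi_z}$ on $(0,\infty)$ explicitly.
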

\begin{proof}
If $u_0\in H^2(\R)$, the proof of the explicit formula in~\cite{Gerard22} gives
\[\Omega_t =U(t)\mathrm{e}^{\ii tL_{u_0}^2},\]
where $U(t)$ is the --- unitary--- solution of the following linear ODE in the space of bounded operators on $L^2_+(\R)$, 
\[ U'(t)=B_{u(t)}U(t),\quad U(0)=\mathrm{Id}, \]
and $B_u=\ii(T_{|D|u}-T_u^2)$ is the second part of the Benjamin--Ono Lax pair. The general case follows by mollifying $u_0$. As for the last inequality, it follows from the inverse Fourier transform applied to $g=\Omega_tf$,
\[ \forall z\in \C_+,
\quad g(z)=\int_0^\infty \mathrm{e}^{\ii z\xi}\widehat g(\xi )\, \frac{\dd \xi}{2\pi},\]
and the Cauchy--Schwarz inequality combined with the Plancherel identity complete the proof.
\end{proof}
\begin{lemma}\label{lem:cancel}
Let $j\in \{ 1,\dots, N\}$ and $f\in L^2_+(\R)$ be such that $f\perp \phe_j$. Then, with the notation of Lemma \ref{L2boundexplicit}, for every $z\in\C_+$,
\[ \Omega_tf(z-2\lambda_jt)\td_t,\infty 0.\]
\end{lemma}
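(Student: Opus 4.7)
The plan is to reduce the claim to showing $\Vert g_t\Vert_{L^2}\to 0$ for the family $g_t:=(X^*-2t(L_{u_0}-\lambda_j)-z)^{-1}f$, since $\Omega_tf(z-2\lambda_jt)=(2\ii\pi)^{-1}I_+(g_t)$.

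First, I would test the defining equation $(X^*-2t(L_{u_0}-\lambda_j)-z)g_t=f$ against $g_t$ in $L^2_+(\R)$ and take the imaginary part. Using the self-adjointness of $L_{u_0}$ together with the identity $\mathrm{Im}\la X^*g,g\ra=-|I_+(g)|^2/(4\pi)$ (obtained by integration by parts on the Fourier side, and consistent with \eqref{Imp} applied to $g=\phe_j$), this yields
\[
\frac{|I_+(g_t)|^2}{4\pi}+\mathrm{Im}(z)\,\Vert g_t\Vert_{L^2}^2\leq \Vert f\Vert_{L^2}\,\Vert g_t\Vert_{L^2}.
\]
This gives the uniform operator bound $\Vert(X^*-2t(L_{u_0}-\lambda_j)-z)^{-1}\Vert_{L^2\to L^2}\leq 1/\mathrm{Im}(z)$ and, crucially, the implication $\Vert g_t\Vert_{L^2}\to 0\Rightarrow I_+(g_t)\to 0$.

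Second, for $f$ of the special form $f=(L_{u_0}-\lambda_j)h$ with $h\in D(L_{u_0})\cap D(X^*)\cap\{\phe_j\}^\perp$, a direct rearrangement of the equation yields
\[
g_t+\frac{h}{2t}=\frac{1}{2t}(X^*-2t(L_{u_0}-\lambda_j)-z)^{-1}(X^*-z)h,
\]
where $(X^*-z)h\in L^2$ because $h\in D(X^*)$. Applying the uniform operator bound, we get $\Vert g_t\Vert_{L^2}\leq \Vert h\Vert/(2t)+\Vert(X^*-z)h\Vert/(2t\,\mathrm{Im}(z))=O(1/t)$, and by the first step $|I_+(g_t)|=O(1/\sqrt{t})\to 0$.

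Finally, I would extend to arbitrary $f\in\{\phe_j\}^\perp$ by density. Since $\lambda_j$ is a simple isolated eigenvalue, $(L_{u_0}-\lambda_j)|_{\{\phe_j\}^\perp}$ is boundedly invertible, and a standard Fourier mollification argument shows that $D(L_{u_0})\cap D(X^*)\cap\{\phe_j\}^\perp$ is a core for $L_{u_0}|_{\{\phe_j\}^\perp}$ (using that $\phe_j\in D(X^*)\cap D(L_{u_0})$ to preserve the $\phe_j$-orthogonality after mollification), so the special $f$ of the previous step are dense in $\{\phe_j\}^\perp$. Combined with the uniform pointwise estimate $|\Omega_tf(z-2\lambda_jt)|\leq\Vert f\Vert_{L^2}/(2\sqrt{\pi\,\mathrm{Im}(z)})$ from Lemma~\ref{L2boundexplicit} (noting $\mathrm{Im}(z-2\lambda_jt)=\mathrm{Im}(z)$), a standard $3\epsilon$ argument concludes. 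The mechanism is transparent: $L_{u_0}-\lambda_j$ is boundedly invertible on $\{\phe_j\}^\perp$, so the large multiplier $2t$ forces $g_t$ to vanish at rate $1/t$; the only mildly delicate point is the density/core verification.
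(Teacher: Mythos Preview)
Your proof is correct and takes essentially the same approach as the paper: both hinge on the algebraic identity $g_t=-h/(2t)+(2t)^{-1}(X^*-2t(L_{u_0}-\lambda_j)-z)^{-1}(X^*-z)h$ with $(L_{u_0}-\lambda_j)h=f$, a uniform bound on the resolvent, and a density argument. The only minor difference is in the density step: the paper starts from $f\in D(X^*)\cap\phe_j^\perp$ and proves directly (via the Fourier--side eigenvalue equation) that the corresponding $h$ lies in $D(X^*)$, which makes the density trivial, whereas you start from $h$ in a suitable core and argue density of the resulting $f$'s; also, the paper applies the bound of Lemma~\ref{L2boundexplicit} directly to $\Omega_t(X^*h-zh)$ rather than re-deriving a resolvent bound from the imaginary-part identity.
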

\begin{proof}
By functional calculus for the selfadjoint operator $L_{u_0}$, there exists $h\in L^2_+(\R )$ such that 
\[(L_{u_0}-\lambda_j\mathrm{Id})h=f.\]
First assume that $f\in D(X^*)$, and let us prove that $h\in D(X^*)$. Using the --- standard ---  Fourier transform, we have
\[ \forall \xi \in ]0,+\infty[,
\quad
 (\xi -\lambda_j)\widehat h(\xi )=\widehat f(\xi )+\frac{1}{2\pi}\widehat u_0* \widehat h(\xi ).\]
Since $f\in D(X^*)$, $\widehat f\in H^1(0,+\infty)$. Moreover, since $xu_0\in L^2(\R)$, $\widehat u_0\in H^1(\R)$, hence $\widehat u_0*\widehat h$ is bounded on $\R $ and its derivative is bounded on $\R$. Consequently,
\[ \widehat h(\xi )=\frac{\widehat f(\xi)}{\xi -\lambda_j}+\frac{\widehat u_0* \widehat h(\xi )}{2\pi (\xi -\lambda_j)}\]
belongs to $H^1(0,\infty )$, namely $h\in D(X^*)$. Let us complete the proof. We have
\begin{align*}
(X^*-2t(L_{u_0}-\lambda_j\mathrm{Id})-z\,\mathrm{Id})^{-1}f&=(X^*-2t(L_{u_0}-\lambda_j\mathrm{Id})-z\,\mathrm{Id})^{-1}(L_{u_0}-\lambda_j\mathrm{Id})h\\
&=-\frac{h}{2t}+\frac{1}{2t}(X^*-2t(L_{u_0}-\lambda_j\mathrm{Id})-z\,\mathrm{Id})^{-1}(X^*h-zh).
\end{align*}
Consequently, using Lemma~\ref{L2boundexplicit},
\[ \Omega_tf(z-2t\lambda_j)=-\frac{I_+(h)}{4\ii\pi t}+\frac{\Omega_t(X^*h-zh)(z-2t\lambda_j)}{2t}\td_t,\infty 0.\]
Finally, for a general $f\in \phe_j^\perp$, the result follows from the uniform bound on $\Omega_tf(z)$ provided by Lemma \ref{L2boundexplicit}, and from the elementary fact that, since $D(X^*)$ is dense in $L^2_+(\R)$, $D(X^*)\cap \phe_j^\perp$ is dense in $\phe_j^\perp$.
\end{proof}

We also need to know a little more about the eigenfunction $\phe_j$.
\begin{lemma}\label{X*phi}
We have $\phe_j\in D(X^*)$. 
\end{lemma}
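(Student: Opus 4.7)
The plan is to show $\widehat{\phe_j}|_{(0,\infty)} \in H^1(0,\infty)$ by extracting a closed-form expression for $\widehat{\phe_j}$ from the eigenvalue equation $L_{u_0}\phe_j = \lambda_j\phe_j$ and reading off its Fourier-side regularity. Since $\lambda_j < 0$, no small denominators appear, and the derivative of $\widehat{\phe_j}$ on $(0,\infty)$ will be controlled in $L^2$ in a single step.

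First I would rewrite the eigenvalue equation on the Fourier side: since $L_{u_0}\phe_j = D\phe_j - \Pi(u_0\phe_j) = \lambda_j\phe_j$, taking the Fourier transform and restricting to $\xi>0$ (where the projector $\Pi$ onto $L^2_+$ acts as the identity) gives
\[
(\xi - \lambda_j)\widehat{\phe_j}(\xi) = \widehat{u_0\phe_j}(\xi), \qquad \xi > 0.
\]
Because $\lambda_j < 0$, the coefficient $\xi - \lambda_j$ is bounded below by $|\lambda_j|>0$ on $[0,\infty)$, so the multiplier $\xi \mapsto 1/(\xi - \lambda_j)$ lies in $W^{1,\infty}(0,\infty)$ and preserves $H^1(0,\infty)$ under multiplication.

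The main step is then to show $\widehat{u_0\phe_j} \in H^1(\R)$, which by Plancherel duality is equivalent to $u_0\phe_j \in L^2(\R)$ and $xu_0\phe_j \in L^2(\R)$. Both follow from the Sobolev embedding $H^1(\R) \hookrightarrow L^\infty(\R)$ in one dimension: the hypotheses $u_0 \in H^1(\R)$ and $xu_0 \in H^1(\R)$ of Theorem \ref{thm:solitonres} give $\|u_0\|_{L^\infty} + \|xu_0\|_{L^\infty} < \infty$, and multiplication of $L^\infty$ by $\phe_j \in L^2$ remains in $L^2$. Combining this with the previous paragraph, $\widehat{\phe_j}(\xi) = \widehat{u_0\phe_j}(\xi)/(\xi-\lambda_j)$ lies in $H^1(0,\infty)$, i.e.\ $\phe_j \in D(X^*)$.

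I do not anticipate a significant obstacle: the argument is a one-step bootstrap from the eigenvalue equation, relying only on the strict negativity of $\lambda_j$ (which rules out any vanishing denominator on $[0,\infty)$) and the pointwise bounds on $u_0$ and $xu_0$ inherited from the weighted $H^1$ hypotheses. If one had $\lambda_j > 0$, a compatibility analysis at the boundary value $\xi = \lambda_j$ would be needed, but the sign of the eigenvalues of $L_{u_0}$ avoids this.
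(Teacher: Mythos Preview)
Your argument is correct and follows essentially the same route as the paper: rewrite the eigenvalue equation on the Fourier side as $\widehat{\phe_j}(\xi)=\widehat{u_0\phe_j}(\xi)/(\xi-\lambda_j)$, then use the weighted regularity of $u_0$ to place the right-hand side in $H^1(0,\infty)$. The only cosmetic difference is that the paper differentiates explicitly and bounds each term using $\widehat{u_0}*\widehat{\phe_j},\,(\widehat{u_0})'*\widehat{\phe_j}\in L^\infty$ together with $1/(\xi-\lambda_j)\in L^2(0,\infty)$ (so it only needs $xu_0\in L^2$), whereas you invoke $u_0,\,xu_0\in L^\infty$ from the Sobolev embedding; both are valid under the standing hypotheses.
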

\begin{proof}
Using again the Fourier transform, we have, for every $\xi >0$,
\begin{align*}
 \widehat \phe_j(\xi )&=\frac{\widehat u_0*\widehat \phe_j(\xi )}{2\pi (\xi -\lambda_j)},\\
 \widehat \phe_j'(\xi )&=-\frac{\widehat u_0*\widehat \phe_j(\xi) }{2\pi (\xi -\lambda_j)^2}+\frac{(\widehat u_0)'*\widehat \phe_j(\xi)}{2\pi (\xi -\lambda_j)},\\
  \end{align*}
and the right hand side in each line clearly belongs to $L^2(0,+\infty)$. 
\end{proof}

Let us now prove the soliton limit~\eqref{soliton}.
\begin{proof}[Proof of the soliton limit~\eqref{soliton}] First we write
\[\Pi u_0=\langle \Pi u_0, \phe_j\rangle \phe_j +f_j,\]
where $f_j\in \phe_j^\perp $. From Lemma \ref{lem:cancel}, we infer, for every $z\in \C_+$,  $\Omega_tf_j(z-2t\lambda_j)\to 0$ as $t\to \infty$. Next we set
\[ \psi_j :=\frac{\phe_j}{\langle X^*\phe_j, \phe_j\rangle -z}\]
and we observe that
\[ (X^*-2t(L_{u_0}-\lambda_j\mathrm{Id})-z\,\mathrm{Id})\psi_j=\phe_j +g_j\ ,\quad g_j:=\frac{X^*\phe_j -\langle X^*\phe_j,\phe_j\rangle \phe_j}{\langle X^*\phe_j,\phe_j\rangle -z}.\]
Consequently,
\[ (X^*-2t(L_{u_0}-\lambda_j\mathrm{Id})-z\,\mathrm{Id})^{-1}\phe_j=\psi_j-(X^*-2t(L_{u_0}-\lambda_j\mathrm{Id})-z\,\mathrm{Id})^{-1}g_j,\]
and
\[ \Omega_t(\phe_j )(z-2t\lambda_j)=\frac{1}{2\ii \pi}I_+(\psi_j)-\Omega_t(g_j)(z-2t\lambda_j).\]
Since $g_j\in \phe_j^\perp$, we  again infer from Lemma \ref{lem:cancel} that  
$ \Omega_t(\phe_j)(z-2t\lambda_j)\to \frac{1}{2\ii \pi}I_+(\psi_j)$ as $t\to\infty$.
Finally, we get
\[ \Omega_t(\Pi u_0)(z-2t\lambda_j)\to \frac{\langle \Pi u_0,\phe_j\rangle }{2\ii \pi}I_+(\psi_j)=\frac{\langle \Pi u_0,\phe_j\rangle I_+(\phe_j)}{2\ii \pi(\langle X^*\phe_j, \phe_j\rangle -z)}=\frac{\ii}{z-\langle X^*\phe_j,\phe_j\rangle },\]
where the last equality is obtained thanks to Wu's identities~\eqref{eq:Wu}. This is exactly the soliton limit \eqref{soliton}.
\end{proof}

\section{Setting the radiation limit}\label{sec:radiationlimit}

Since the radiation limit property~\eqref{scat} must be shown in the sense of weak convergence in the variable $\lambda$, we can assume that $\lambda$ is restricted to a compact set $\Lambda=[\Lambda_-,\Lambda_+]\subset(0,+\infty)$. 
 Let us introduce a cutoff function $\chi$ equal to $1$ on $[\Lambda_-/2,2\Lambda_+]$ and supported in $[\Lambda_-/4,4\Lambda_+]$. 

In this section, we establish the following proposition.

\begin{proposition}[Sufficient condition for~\eqref{scat}]\label{redscat}
Recall the generalized eigenfunction $m_-(x,\lambda)$ defined in \eqref{def:m}, recall $f_{t,z}$ from \eqref{explicit} and set $q_{t,z}:=\Pi u_0-2tT_{u_0}f_{t,z}$.  Property~\eqref{scat} holds if the following three convergence results are satisfied.
\begin{itemize}
\item Localization. For every $z\in\C_+$, we have the following weak convergence in $L^2(\Lambda)$:
\begin{equation}\label{loc}
\sqrt{2t}\int_0^{\infty}\ee^{\ii t (\xi-\lambda)^2+\ii z \xi}\widehat{q}_{t,z-2t\lambda}(\xi)(1-\chi(\xi))\frac{\dd\xi}{2\pi}\tdw_t,{+\infty} 0.
\end{equation}
\item Main term. For every $z\in\C_+$, we have the following strong convergence in $L^2(\Lambda)$:
\begin{equation}\label{distorted}
\sqrt{2t}\int_0^{\infty}\ee^{\ii t (\xi-\lambda)^2+\ii z\xi}\int_{\R}\chi (\xi)u_0(x)\overline{m_-}(x,\xi)\dd x\frac{\dd\xi}{2\pi}
	\td_t,{+\infty} \frac{\mathrm{e}^{\ii\pi/4}}{\sqrt{2\pi}}\ee^{\ii\lambda z}\widetilde{\Pi u_0}(\lambda).
\end{equation}
\item Remainder term.   For every $z\in\C_+$, we have the following weak convergence in $L^2_w(\Lambda)$:
\begin{equation}\label{cancel}
\int_\R q_{t,z-2t\lambda}(x)\int_\R \mathrm{e}^{\ii\frac{\alpha ^2}{2}+\ii\frac{z\alpha }{\sqrt{2t}}}\chi \left (\lambda +\frac{\alpha}{\sqrt{2t}}\right )\overline r_{t,z,\lambda}(x,\alpha)\, \frac{\dd \alpha}{2\pi}\dd x 	
	\tdw_t,{+\infty} 0,
\end{equation}
with
\begin{equation}\label{r}
r_{t,z,\lambda}(x,\alpha)
	:=\ii\int_{-\infty}^x \mathrm{e}^{\ii\lambda (x-y)}T_{u_0}m_-\left (y , \lambda +\frac{\alpha}{\sqrt{2t}}\right )\left [ \mathrm{e}^{-\ii\overline z\frac{(x-y)}{2t}+\ii\frac{(x^2-y^2)}{4t}}- \mathrm{e}^{\ii\alpha \frac{(x-y)}{\sqrt{2t}}}  \right ]\dd y.
\end{equation}
\end{itemize}
\end{proposition}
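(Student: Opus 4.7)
My plan is to turn the explicit formula \eqref{explicit} into a one-dimensional Fourier-side integral in $\xi\in(0,\infty)$, and to match its constituent pieces against the three hypotheses. Rearranging the resolvent equation $(X^*-2t(L_{u_0}-\lambda)-z)f_{t,z,\lambda}=\Pi u_0$ as $X^*f_{t,z,\lambda}=q_{t,z,\lambda}+2tDf_{t,z,\lambda}-2t\lambda f_{t,z,\lambda}+zf_{t,z,\lambda}$ and taking the ordinary Fourier transform on $(0,\infty)$, with $\widehat{X^*g}(\xi)=\ii\partial_\xi\widehat g(\xi)$, reduces the problem to the first-order linear ODE
\[
\ii\,\partial_\xi\widehat f_{t,z,\lambda}(\xi)=\bigl(2t(\xi-\lambda)+z\bigr)\widehat f_{t,z,\lambda}(\xi)+\widehat q_{t,z,\lambda}(\xi),\quad \xi>0.
\]
Solving with integrating factor $\mu(\xi):=\ee^{\ii t(\xi-\lambda)^2+\ii z\xi}$ and the decay $\widehat f_{t,z,\lambda}(\xi)\to 0$ as $\xi\to+\infty$ (which follows from $f_{t,z,\lambda}\in D(X^*)$ under the regularity of $u_0$), one finds $\widehat f_{t,z,\lambda}(0^+)=\ii\,\ee^{-\ii t\lambda^2}\int_0^\infty\mu(\eta)\widehat q_{t,z,\lambda}(\eta)\,d\eta$; substituting into \eqref{explicit} yields
\[
\sqrt{2t}\,\ee^{\ii t\lambda^2}\,\Pi u(t,z-2t\lambda)=\sqrt{2t}\int_0^\infty\ee^{\ii t(\xi-\lambda)^2+\ii z\xi}\,\widehat q_{t,z,\lambda}(\xi)\,\frac{d\xi}{2\pi}.
\]

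Next I insert $1=(1-\chi(\xi))+\chi(\xi)$ under the $\xi$-integral. The $(1-\chi)$ piece is the LHS of \eqref{loc}, assumed to converge weakly to $0$. In the remaining $\chi$-piece, adding and subtracting $\chi(\xi)\int_\R u_0(x)\overline{m_-(x,\xi)}\,dx$ singles out the main contribution (the LHS of \eqref{distorted}, assumed to converge to the desired limit) and leaves the residual error
\[
E_t(\lambda):=\sqrt{2t}\int_0^\infty \ee^{\ii t(\xi-\lambda)^2+\ii z\xi}\,\chi(\xi)\!\left[\widehat q_{t,z,\lambda}(\xi)-\int_\R u_0(x)\overline{m_-(x,\xi)}\,dx\right]\frac{d\xi}{2\pi}.
\]
To identify $E_t(\lambda)$ with the LHS of \eqref{cancel}, I decompose the bracket as $\int_\R q_{t,z,\lambda}(x)\bigl[\ee^{-\ii\xi x}-\overline{m_-(x,\xi)}\bigr]dx+\int_\R (q_{t,z,\lambda}-u_0)(x)\overline{m_-(x,\xi)}\,dx$, and apply the Lippmann--Schwinger identity $\ee^{-\ii\xi x}-\overline{m_-(x,\xi)}=\ii\int_{-\infty}^x\ee^{-\ii\xi(x-y)}\overline{T_{u_0}m_-(y,\xi)}\,dy$ to turn the first summand into a Volterra pairing of $q_{t,z,\lambda}$ against $T_{u_0}m_-$; the second summand is then treated via the Hardy-space identity $u_0-q_{t,z,\lambda}=\overline{\Pi u_0}+2tT_{u_0}f_{t,z,\lambda}$ combined with the properties of the distorted Fourier transform (Appendix \ref{sec:distorted}), in particular using that $\overline{\Pi u_0}$ pairs trivially with the free exponential for $\xi>0$. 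After Fubini and the substitution $\xi=\lambda+\alpha/\sqrt{2t}$, the Gaussian weight $\mu(\xi)$ becomes $\ee^{\ii\alpha^2/2+\ii z\lambda+\ii z\alpha/\sqrt{2t}}$, and completing the square in $\xi$ while evaluating the residual $\xi$-integral produces the phase $\ee^{\ii\lambda(x-y)}\ee^{-\ii\overline z(x-y)/(2t)+\ii(x^2-y^2)/(4t)}$, which competes with the Lippmann--Schwinger phase $\ee^{\ii\xi(x-y)}=\ee^{\ii\lambda(x-y)}\ee^{\ii\alpha(x-y)/\sqrt{2t}}$; their difference is exactly the bracket in \eqref{r}.

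The main obstacle is precisely this last identification: the Fubini exchange, the completion of the square under the cutoff $\chi(\xi)$, and the cancellations that dispose of the auxiliary $\int (q_{t,z,\lambda}-u_0)\overline{m_-}\,dx$ term all have to be performed carefully so that every additional contribution either vanishes by the Hardy-space support property or recombines with the Fresnel output to produce the specific phase $\ee^{-\ii\overline z(x-y)/(2t)+\ii(x^2-y^2)/(4t)}$ of \eqref{r}. Once the identity $E_t(\lambda)=$ LHS of \eqref{cancel} is established, assumption \eqref{cancel} yields weak $L^2(\Lambda)$ convergence of $E_t$ to $0$, and combining with \eqref{loc} and \eqref{distorted} gives \eqref{scat}.
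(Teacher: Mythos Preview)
Your derivation of the one–dimensional Fourier–side representation
\[
\sqrt{2t}\,\ee^{\ii t\lambda^2}\,\Pi u(t,z-2t\lambda)=\sqrt{2t}\int_0^\infty\ee^{\ii t(\xi-\lambda)^2+\ii z\xi}\,\widehat q_{t,z,\lambda}(\xi)\,\frac{\dd\xi}{2\pi}
\]
and the splitting $1=(1-\chi)+\chi$ are correct and match the paper exactly. The gap is in the identification of the residual $E_t(\lambda)$ with the left–hand side of \eqref{cancel}.

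The paper does \emph{not} use your decomposition $q_{t,z,\lambda}-u_0=-\overline{\Pi u_0}-2tT_{u_0}f_{t,z,\lambda}$. Instead it first derives from the resolvent relation a genuine integral equation for $q_{t,z,\lambda}$ itself (equation \eqref{inteqqtbis}):
\[
q_{t,z,\lambda}(x)=\Pi u_0(x)-\ii\int_0^\infty \ee^{-\ii\lambda\sigma+\ii\sigma(2z+\sigma)/(4t)}\,T_{u_0}\!\left[S\!\left(\tfrac{\sigma}{2t}\right)^*q_{t,z,\lambda}(\diamond+\sigma)\right](x)\,\dd\sigma,
\]
and combines this with the Lippmann--Schwinger identity to rewrite $\widehat q_{t,z,\lambda}(\eta)$ as $\int \Pi u_0\,\overline{m_-}$ plus a correction already paired against $q_{t,z,\lambda}$. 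The characteristic phase $\ee^{-\ii\overline z(x-y)/(2t)+\ii(x^2-y^2)/(4t)}$ in \eqref{r} is read off directly from the kernel $\ee^{-\ii\lambda\sigma+\ii\sigma(2z+\sigma)/(4t)}$ of this integral equation upon setting $\sigma=x-y$ and moving $T_{u_0}$ across by adjointness; it does \emph{not} come from ``completing the square in $\xi$'' or from evaluating any Fresnel integral, as you suggest. After the substitution $\xi=\lambda+\alpha/\sqrt{2t}$ there is no $\xi$–integral left to evaluate, and the phase in question is independent of both $\xi$ and $\alpha$.

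Your alternative route leaves the term $-2t\int T_{u_0}f_{t,z,\lambda}\,\overline{m_-(\cdot,\xi)}\,\dd x$ unresolved: it is expressed in terms of $f_{t,z,\lambda}$, not $q_{t,z,\lambda}$, whereas \eqref{cancel} is a statement about $q_{t,z,\lambda}$. To convert it, you would have to re-express $2tf_{t,z,\lambda}$ in terms of $q_{t,z,\lambda}$, which is exactly the content of \eqref{inteqqtbis}. In other words, the step you label as the ``main obstacle'' cannot be closed by Fourier–side manipulations alone; the missing ingredient is the integral equation \eqref{inteqqtbis}, and once you use it you are back on the paper's path.
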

Property \eqref{distorted} seems accessible because of the decay assumed on $u_0$ and since we have good estimates on $m_-(x,\lambda)$ as $\lambda $ stays in a segment of $(0,+\infty)$. It is established in Section \ref{3.8}, as well as the localization near $\Lambda$. Property \eqref{cancel} is established in Section \ref{3.9}. It is more delicate to handle, since one must use appropriate estimates on $q_{t,z-2t\lambda}$ as $t\to +\infty$. These estimates are derived in Section \ref{sec:estimates}. First we prove Proposition \ref{redscat}.

\subsection{Choosing the unknown function}

Given $z\in \C_+$ and $\lambda >0$, we study (see \eqref{explicit})
\[ \Pi u(t, z-2t\lambda )=\frac{1}{2\ii\pi}I_+(f_{t,z-2t\lambda})=\frac{1}{2\ii\pi}\widehat f_{t,z-2t\lambda}(0^+).\]
We set, for $\xi >0$, 
\[ h_{t,z,\lambda}(\xi ):=(2t)^{1/2}\mathrm{e}^{\ii t(\xi -\lambda)^2}\widehat f_{t,z-2t\lambda}(\xi ).\]
From the equation $(X^*-2t(D-\lambda\,\mathrm{Id})+2tT_{u_0}-z\,\mathrm{Id})f_{t,z-2t\lambda}=\Pi u_0$ expressed in the Fourier transform and in the domain $\xi >0$, we infer
\begin{equation}\label{hp}
 h_{t,z,\lambda}(\xi )=\ii(2t)^{1/2}\int_\xi ^\infty \mathrm{e}^{\ii t(\eta -\lambda)^2+\ii z(\eta -\xi)}\widehat q_{t,z,\lambda}(\eta )\dd \eta,
 \end{equation}
where  $q_{t,z-2t\lambda }\in L^2_+$ can be written in the form
 \begin{equation}\label{qf}
q_{t,z-2t\lambda}=(X^*-2t(D-\lambda\,\mathrm{Id} )-z\,\mathrm{Id})f_{t,z-2t\lambda}=\Pi u_0-2tT_{u_0}f_{t,z-2t\lambda}.
 \end{equation}
 Let us look for an equation for $q_{t,z-2t\lambda}$. We have, from \eqref{hp},
 \[ 2t \widehat f_{t,z-2t\lambda}(\xi )=(2\ii t)\mathrm{e}^{-\ii t(\xi -\lambda)^2}\int_\xi ^\infty \mathrm{e}^{\ii t(\zeta -\lambda )^2+\ii z(\zeta -\xi)}\widehat q_{t,z-2t\lambda}(\zeta )\dd \zeta ,\]
and consequently
 \begin{align*}
 2tf_{t,z-2t\lambda}(x)&= 2\ii t \int_\R q_{t,z-2t\lambda}(y) \dd y \int_0^\infty \frac{\dd\xi}{2\pi}\int_\xi ^\infty \dd\zeta \, \mathrm{e}^{\ii t(\zeta -\xi )(\zeta +\xi -2\lambda)+\ii z(\zeta -\xi)+\ii x\xi -\ii\zeta y}     \\
 &= \ii \int_\R q_{t,z-2t\lambda}(y) \dd y \int_0^\infty \frac{\dd\xi}{2\pi}\int_0 ^\infty \dd\sigma \, \mathrm{e}^{\ii\sigma (\xi -\lambda+(z-y)/(2t)+\sigma/(4t) )+\ii(x-y)\xi }\\
 &=\ii\int_0^\infty \mathrm{e}^{-\ii\lambda \sigma +\ii\sigma (2z+\sigma)/(4t)}\left [S\left (\frac{\sigma}{2t}\right )^*q_{t,z-2t\lambda}\right ](x+\sigma )\dd \sigma ,
 \end{align*}
 where $(S(\eta )^*)_{\eta \geq 0}$ denotes the adjoint Lax--Beurling semigroup on $L^2_+(\R )$,
 \[ S(\eta)^*f=\mathrm{e}^{-\ii\eta X^*}f=\Pi [\mathrm{e}^{-\ii x\eta}f(x)].\]
The equation for $q_{t,z-2t\lambda}$ therefore reads
\begin{equation}\label{inteqqtbis}
q_{t,z-2t\lambda}(x)=\Pi u_0(x)-\ii\int_0^\infty \mathrm{e}^{-\ii\lambda \sigma +\ii\sigma (2z+\sigma)/(4t)} T_{u_0}\left [S\left (\frac{\sigma}{2t}\right )^*q_{t,z-2t\lambda}(\diamond +\sigma )\right ](x)\dd \sigma.
\end{equation}

\subsection{Making the distorted Fourier transform appear} Coming back to the explicit formula and to \eqref{hp}, we must study the limit as $t\to \infty $ of
\begin{equation}\label{Piuq}
(2t)^{1/2} \mathrm{e}^{\ii t\lambda ^2}\Pi u(t,z-2t\lambda )=\frac{1}{2\ii \pi}h_{t,z,\lambda}(0^+)
=(2t)^{1/2}\int_0^\infty \mathrm{e}^{\ii t(\eta-\lambda)^2+\ii z\eta}\widehat q_{t,z-2t\lambda}(\eta)\, \frac{\dd\eta}{2\pi}.
\end{equation}
We first localize near the critical point $\lambda$  the domain of the oscillatory integral in the right--hand side, by using~\eqref{loc}.

Now, using the following equation defining the generalized eigenfunction $m_-$ of $L_{u_0}$, 
\begin{equation}\label{m}
\forall \eta >0,\quad
 m_-(x,\eta)=\mathrm{e}^{\ii \eta x}+\ii \int_{-\infty}^x \mathrm{e}^{\ii\eta (x-y)}T_{u_0}m_-(y,\eta)\dd y,
\end{equation} 
we get
\begin{equation*}
\widehat q_{t,z-2t\lambda}\left (\eta\right )
	=\int_\R q_{t,z-2t\lambda}(x)\overline{m_-}\left (x,\eta\right )\dd x
	+\ii \int_\R q_{t,z-2t\lambda}(x)\int_{-\infty}^x \mathrm{e}^{\ii \eta(x-y)}\overline{T_{u_0}m_-}\left (y,\eta\right )\dd y \dd x.
\end{equation*}
Using the equation \eqref{inteqqtbis} satisfied by $q_{t,z-2t\lambda}$, we finally obtain
\begin{multline}\label{qhat}
 \widehat q_{t,z-2t\lambda}\left (\eta\right )=\int_\R \Pi u_0(x)\overline{m_-}\left (x,\eta\right )\dd x	\\
	-\ii \int_\R q_{t,z-2t\lambda}(x) \int_0^\infty\overline{T_{u_0}m_-}\left (x-\sigma , \eta\right )\left [ \mathrm{e}^{-\ii\lambda\sigma-\ii \sigma (2 z+\sigma -2x)/(4t)}- \mathrm{e}^{-\ii \eta \sigma }  \right ]\dd \sigma
\dd x.
\end{multline}
Notice that, in the first integral of the right--hand side, $\Pi$ can be dropped since $m_-\in L^\infty_+$.  
In the second integral of the right--hand side, we make the change of variable $x-\sigma=y$ and $\xi =\lambda +\frac{\alpha}{\sqrt{2t}}.$ 
Hence the decomposition  \eqref{qhat} transforms~\eqref{Piuq} into
\begin{align*}
\sqrt{2t}\ee^{\ii t \lambda^2}\Pi u(t,z-2t\lambda)
	=&\sqrt{2t}\int_0^{\infty}\ee^{\ii t (\xi-\lambda)^2+\ii z\xi} \widehat{q}_{t,z-2t\lambda}(\xi)(1-\chi (\xi))\frac{\dd\xi}{2\pi}\\ \nonumber
	&+\sqrt{2t}\int_0^{\infty}\ee^{\ii t (\xi-\lambda)^2+\ii z\xi}\int_{\R}\chi (\xi)u_0(x)\overline{m_-}(x,\xi)\dd x\frac{\dd\xi}{2\pi}
	\\ \nonumber
	&+\ee^{\ii \lambda z}\int_{-\infty}^{\infty}\ee^{\ii \alpha^2/2+\ii \frac{z\alpha}{\sqrt{2t}} }\int_{\R}\chi \left(\lambda+\frac{\alpha}{\sqrt{2t}}\right)q_{t,z-2t\lambda}(x)\overline{r}_{t,z,\lambda}(x,\alpha)\dd x\frac{\dd \alpha}{2\pi},
\end{align*}
where $r_{t,z,\lambda}(x,\alpha)$ satisfies~\eqref{r}.
We are therefore reduced to proving~\eqref{loc}, \eqref{distorted} and~\eqref{cancel}. This completes the proof of Proposition \ref{redscat}.

\section{A priori estimates}\label{sec:estimates}

In this section, we derive a priori estimates on 
\begin{equation}\label{def:f}
f_{t,z}=(X^*-2tL_{u_0}-z\,\mathrm{Id})^{-1}\Pi u_0
\end{equation}
 and $q_{t,z}=\Pi u_0-2tT_{u_0}f_{t,z}$, and their translates in $z$ by $2t\lambda$, $\lambda>0$.
 The Fourier transform of $f_{t,z}$ satisfies the equation
 \begin{equation}\label{eq:fhat-no-translation}
 \forall \xi>0,\quad\left(\ii\frac{\dd}{\dd\xi}-2t\xi-z\,\mathrm{Id}\right)\widehat{f}_{t,z}(\xi)+\frac{t}{\pi}\widehat{u_0}\ast\widehat{f}_{t,z}(\xi)=\widehat{u_0}(\xi).
 \end{equation}

\subsection{$L^2$ bounds}

\begin{lemma}
For each $t\in\mathbb{R}$ and $z\in\mathbb{C}_+$, the function $\widehat{f}_{t,z}$ defined by~\eqref{eq:fhat} satisfies $\xi\widehat{f}_{t,z}\in L^2(0,\infty)$ and $\widehat{f}_{t,z}\in H^1(0,\infty)$.
\end{lemma}

\begin{proof}
We first show that 
\[
g^\eps_{t,z}
	:=[L_{u_0}(\mathrm{Id}+\eps L_{u_0})^{-1},(X^*-2tL_{u_0}-z\,\mathrm{Id})]f_{t,z}
\]
stays bounded in $L^2_+(\R)$ as $\eps\to 0$ with $\eps$ positive.
Note that $g^\epsilon_{t,z}$ can be equivalently written in the form
\[
g^\eps_{t,z}
	=[L_{u_0}(\mathrm{Id}+\eps L_{u_0})^{-1},X^*]f_{t,z}
\]
 and write
\[
L_{u_0}(\mathrm{Id}+\eps L_{u_0})^{-1}=\frac{1}{\eps}\left(\mathrm{Id}-(\mathrm{Id}+\eps L_{u_0})^{-1}\right),
\]
so that
\begin{align*}
[L_{u_0}(\mathrm{Id}+\eps L_{u_0})^{-1},X^*]
	&=-\frac{1}{\eps}[(\mathrm{Id}+\eps L_{u_0})^{-1},X^*]
	\\
	&=-(\mathrm{Id}+\eps L_{u_0})^{-1}[X^*,L_{u_0}](\mathrm{Id}+\eps L_{u_0})^{-1}.
\end{align*}
Using the identity $[X^*,L_{u_0}]=\ii \,\mathrm{Id}-\frac{\ii}{2\pi}\Pi u_0 I_+(\diamond)$ (see~\cite[Lemma 10]{Gerard22}),
we then decompose
\begin{equation*}
g^\eps_{t,z}
	=-\ii (\mathrm{Id}+\eps L_{u_0})^{-2}f_{t,z}+\frac{\ii}{2\pi}(\mathrm{Id}+\eps L_{u_0})^{-1}(\Pi u_0) I_+((\mathrm{Id}+\eps L_{u_0})^{-1}f_{t,z}).
\end{equation*}
Since $L_{u_0}$ is bounded from below, the operator $\mathrm{Id}+\eps L_{u_0}$ is invertible for small, positive values of $\eps$, with a bounded inverse as $\eps\to 0$. Therefore the first term in the right-hand side is bounded in $L^2_+(\R)$ as $\eps\downarrow 0$. As for the second term in the right-hand side, $(\mathrm{Id}+\eps L_{u_0})^{-1}(\Pi u_0)$ is bounded in $L_+^2(\mathbb{R})$ as $\epsilon\downarrow 0$ for the same reason, so it is enough to show that the scalar $I_+((\mathrm{Id}+\eps L_{u_0})^{-1}f_{t,z})$ remains bounded as $\eps\downarrow 0$.  For that, we begin with the operator identity
\[
(\mathrm{Id}+\eps L_{u_0})^{-1}=(\mathrm{Id}+\eps D)^{-1}+\eps (\mathrm{Id}+\eps D)^{-1}T_{u_0}(\mathrm{Id}+\eps L_{u_0})^{-1}.
\]
Since the symbol $\xi$ of $D$ vanishes as $\xi\downarrow 0$, by definition of $I_+$, we deduce
\[
 I_+((\mathrm{Id}+\eps L_{u_0})^{-1} f_{t,z})
 	= I_+(f_{t,z}) 
 	+\eps I_+(T_{u_0}((\mathrm{Id}+\eps L_{u_0})^{-1}f_{t,z})).
\]
Note that $\widehat{f}_{t,z}\in H^1(0,\delta)$ for every $\delta>0$ (see below Theorem 6 in~\cite{Gerard22}), therefore $ I_+(f_{t,z})$ is well-defined. Moreover, for any $k\in L_+^2(\mathbb{R})$ and real-valued $u_0\in L^2(\mathbb{R})$ we have $I_+(T_{u_0}k)=\langle k,\Pi u_0\rangle$, so taking $k=(\mathrm{Id}+\eps L_{u_0})^{-1}f_{t,z}$ we have
\[
|I_+(T_{u_0}((\mathrm{Id}+\eps L_{u_0})^{-1}f_{t,z}))|
	=|\langle (\mathrm{Id}+\eps L_{u_0})^{-1}f_{t,z},\Pi u_0\rangle|
	\leq C\| f_{t,z}\|_{L^2}\|\Pi u_0\|_{L^2}.
\] 
Combining the two observations together, we get that the term $ I_+((\mathrm{Id}+\eps L_{u_0})^{-1}f_{t,z})
$ is bounded as $\eps\downarrow 0$.
We conclude   that $g^\eps_{t,z}$ is bounded in $L^2_+$ as $\eps\downarrow 0$.

Finally, we observe using~\eqref{def:f} that
\[
g^\eps_{t,z}
	=L_{u_0}(\mathrm{Id}+\eps L_{u_0})^{-1}\Pi u_0
	-(X^*-2tL_{u_0}-z\,\mathrm{Id})L_{u_0}(\mathrm{Id}+\eps L_{u_0})^{-1}f_{t,z}.
\]
Since $u_0\in H^1(\R)$, $\Pi u_0$ belongs to the domain $H^1_+(\R)$ of $L_{u_0}$. Therefore $L_{u_0}(\mathrm{Id}+\eps L_{u_0})^{-1}\Pi u_0$ is bounded in $L^2_+(\R)$ as $\eps\downarrow 0$. Now, $(X^*-2tL_{u_0}-z\,\mathrm{Id})^{-1}$ is a bounded operator on $L_+^2(\mathbb{R})$ --- see  Remark \ref{invertibility} ---, so applying it to the above identity, we conclude that  $L_{u_0}(\mathrm{Id}+\eps L_{u_0})^{-1}f_{t,z}$ is bounded in $L^2_+(\R)$ as $\eps\downarrow 0$. If, for any $(t,z)\in\mathbb{R}\times\mathbb{C}_+$, we denote by $\mu_{t,z} $ the spectral measure of the self--adjoint operator $L_{u_0}$ associated with  the vector $f_{t,z}\in L^2_+(\mathbb{R})$, we have
\[ \Vert L_{u_0}(\mathrm{Id}+\eps L_{u_0})^{-1}f_{t,z}\Vert_{L^2}^2=\int_\R \lambda^2(1+\eps \lambda)^{-2}\, \dd\mu_{t,z} (\lambda),\]
and therefore, by Fatou's lemma,
\[ \int_\R \lambda^2\, \dd\mu_{t,z} (\lambda)<+\infty ,\]
which precisely means that $f_{t,z}$ belongs to the domain of the operator $L_{u_0}$, namely $H^1_+(\R)$.
 We then conclude that $\xi\widehat{f}_{t,z} \in L^2(0,\infty)$ and using equation~\eqref{eq:fhat-no-translation} with Young's inequality for $\widehat{u_0}\in L^1$ and $\widehat{f}_{t,z}\in L^2$, we also get that $\widehat{f}_{t,z}\in H^1(0,\infty)$.
\end{proof}


\begin{lemma}\label{lem:f-bound}
Let $\Lambda=[\Lambda_-,\Lambda_+]\subset (0,\infty)$.  There is $C>0$ such that when $\operatorname{Im}z+|\operatorname{Re}(z)|/t$ is small enough and $t\ge 1$, then 
$(x,\lambda)\mapsto f_{t,z-2t\lambda}(x)$ satisfies
\[
t  \|f_{t,z-2t\lambda}\|_{L^2_{x,\lambda}(\R\times\Lambda)}^2
	\leq C.
\]
\end{lemma}

\begin{proof}
Replacing $z$ with $z-2t\lambda$ in \eqref{eq:fhat-no-translation} gives the equation
\begin{equation}\label{eq:fhat}
\forall \xi >0,\quad \left(\ii \frac{\dd }{\dd \xi}-2t\xi+(2t\lambda-z)\mathrm{Id}\right)\widehat{f}_{t,z-2t\lambda}(\xi) +\frac{t}{\pi} \widehat{u_0}\ast \widehat{f}_{t,z-2t\lambda}(\xi)=\widehat u_0(\xi).
\end{equation}
We integrate identity~\eqref{eq:fhat} against $\overline{(\widehat{f}_{t,z-2t\lambda})'}$ and take the real part. We find that
\[
\operatorname{Re}\langle \ii (\widehat{f}_{t,z-2t\lambda})',{( \widehat{f}_{t,z-2t\lambda})'}\rangle=0.
\]
Then, by integration by parts,
\[
\operatorname{Re}\langle -2t\xi  \widehat{f}_{t,z-2t\lambda}, {(\widehat{f}_{t,z-2t\lambda})'}\rangle
	=-t \left[\xi|\widehat{f}_{t,z-2t\lambda}|^2\right]_0^{+\infty}+t\Vert \widehat{f}_{t,z-2t\lambda}\Vert_{L^2}^2.
\]
Since $\widehat{f}_{t,z-2t\lambda}\in H^1(0,+\infty)$, it is continuous at $\xi=0^+$. Moreover, since $\xi \widehat{f}_{t,z-2t\lambda}$ and $(\widehat{f}_{t,z-2t\lambda})'$ belong to $L^2(0,+\infty)$, using that 
\[
\xi |\widehat{f}_{t,z-2t\lambda}|^2=-\operatorname{Re}\int_\xi^{\infty} \left(2\eta \widehat{f}_{t,z-2t\lambda}(\eta)\overline{(\widehat{f}_{t,z-2t\lambda})'(\eta)}+ |\widehat{f}_{t,z-2t\lambda}(\eta)|^2 \right)\dd\eta,
\]
 we deduce that $\xi |\widehat{f}_{t,z-2t\lambda}|^2\to 0$ as $\xi\to+\infty$, so
\[
\operatorname{Re}\langle -2t\xi \widehat{f}_{t,z-2t\lambda}, {( \widehat{f}_{t,z-2t\lambda})'}\rangle
	=t\| \widehat{f}_{t,z-2t\lambda}\|_{L^2}^2.
\]

Next, we have by direct integration that
\[
\operatorname{Re}\langle(2t\lambda-\operatorname{Re}(z)) \widehat{f}_{t,z-2t\lambda}, {(\widehat{f}_{t,z-2t\lambda})'}\rangle
	=-(t\lambda-\operatorname{Re}(z)/2)|I_+(f_{t,z-2t\lambda})|^2.
\]
We note that thanks to the explicit formula (see \eqref{explicit}),
\[
|I_+(f_{t,z-2t\lambda})|
	=2\pi |\Pi u(t,z-2t\lambda)|,
\]
hence the conservation of the $L^2$ norm for the Benjamin-Ono equation implies that
\begin{equation}\label{eq:I+bound}
\int_{\R_+} |I_+(f_{t,z-2t\lambda})|^2\dd\lambda 
	=(2\pi)^2\int_{\R_+}|\Pi u(t,z-2t\lambda)|^2\dd\lambda
	\leq \frac{C}{t}.
\end{equation}

We now focus on
\[
2t\operatorname{Re}\langle \widehat{u_0}\ast \widehat{f}_{t,z-2t\lambda}, {(\widehat{f}_{t,z-2t\lambda})'}\rangle
	=\left[t(\widehat{u_0}\ast \widehat{f}_{t,z-2t\lambda})\overline{ \widehat{f}_{t,z-2t\lambda}}\right]_0^{+\infty},
\]
where we used integration by parts and the fact that $u_0$ is real-valued. Hence we get the upper bound 
\begin{align*}
\left|2t\operatorname{Re}\langle \widehat{u_0}\ast \widehat{f}_{t,z-2t\lambda}, {(\widehat{f}_{t,z-2t\lambda})'}\rangle\right|
	&\leq Ct|\langle u_0,f_{t,z-2t\lambda}\rangle| |I_+(f_{t,z-2t\lambda})|\\
	&\leq t(\epsilon|\langle u_0,f_{t,z-2t\lambda}\rangle|^2+C_\epsilon |I_+(f_{t,z-2t\lambda})|^2).
\end{align*}

To conclude, integrating~\eqref{eq:fhat} against $\overline{(\widehat{f}_{t,z-2t\lambda})'}$ and taking the real part, we have shown that for large $t$, if $\operatorname{Re}(z)/t$ stays bounded,
\begin{equation}\label{eq:f-int}
\int_{\Lambda}\left|t\|\widehat{f}_{t,z-2t\lambda}\|_{L^2}^2
 +\operatorname{Im}(z)\operatorname{Im}\langle \widehat{f}_{t,z-2t\lambda},{(\widehat{f}_{t,z-2t\lambda})'}\rangle\right|\dd\lambda
 	\leq C_{\epsilon,\Lambda} + t\epsilon \|u_0\|_{L^2}^2\int_{\Lambda}\|f_{t,z-2t\lambda}\|_{L^2}^2\dd\lambda.
\end{equation}

If $\operatorname{Im}(z)= 0$, we can absorb the second term on the right-hand side into the left-hand side.
Otherwise, we need to deal with the term $\operatorname{Im}(z)\operatorname{Im}\langle\widehat{f}_{t,z-2t\lambda},{(\widehat{f}_{t,z-2t\lambda})'}\rangle$ separately.
For this we integrate~\eqref{eq:fhat} against $\overline{ \widehat{f}_{t,z-2t\lambda}}$ and take the imaginary part: we find
\[
\operatorname{Im}\langle \widehat{f}_{t,z-2t\lambda}, {( \widehat{f}_{t,z-2t\lambda})'}\rangle
	-\operatorname{Im}(z)\|\widehat{f}_{t,z-2t\lambda}\|_{L^2}^2
	-\frac{t}{\pi}\operatorname{Im} \langle \widehat{u_0}\ast  \widehat{f}_{t,z-2t\lambda}, \widehat{f}_{t,z-2t\lambda}\rangle 
	=\operatorname{Im} \langle \widehat{u_0},\widehat{f}_{t,z-2t\lambda}\rangle.
\]
We note that Young's inequality implies
\[
t| \langle \widehat{u_0}\ast \widehat{f}_{t,z-2t\lambda},\widehat{f}_{t,z-2t\lambda}\rangle|
	\leq Ct \|\widehat{u_0}\|_{L^1}\| \widehat{f}_{t,z-2t\lambda}\|_{L^2}^2,
\]
where  $\widehat{u_0}\in L^1$ if $u_0$ since $u_0\in H^{1}$. Hence when $|\operatorname{Im}(z)|\leq \epsilon$ we find
\[
|\operatorname{Im}(z)\operatorname{Im}\langle \widehat{f}_{t,z-2t\lambda},(\widehat{f}_{t,z-2t\lambda})'\rangle|
	\leq C\epsilon (t \| \widehat{f}_{t,z-2t\lambda}\|_{L^2}^2+1),
\]
so we can still absorb this term in~\eqref{eq:f-int} to find an upper bound on $t\| \widehat{f}_{t,z-2t\lambda}\|_{L^2_{\lambda,\xi}(\Lambda\times(0,\infty))}^2$.
\end{proof}

From the $L^2$ bound on $(x,\lambda)\mapsto f_{t,z-2t\lambda}(x)$ in Lemma~\ref{lem:f-bound} we deduce a $L^2$ bound on $(x,\lambda)\mapsto X^* q_{t,z-2t\lambda}(x)$.

\begin{lemma}\label{lem:q-bound}
For all subintervals $\Lambda=[\Lambda_-,\Lambda_+]\subset(0,\infty)$ we have
\begin{equation}\label{qH1}
\| X^*q_{t,z-2t\lambda}\|_{L^2_{x,\lambda}(\R\times\Lambda)} 
	 \leq C\sqrt t.
\end{equation}
\end{lemma}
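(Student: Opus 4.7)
The plan is to use the decomposition $q_{t,z,\lambda}=\Pi u_0-2t\,T_{u_0}f_{t,z,\lambda}$ so that
\[
X^* q_{t,z,\lambda}=X^*\Pi u_0 - 2t\,X^* T_{u_0} f_{t,z,\lambda},
\]
and to convert each action of $X^*$ on a Toeplitz expression into multiplication by $x$ inside the projector. Once this is done the bound reduces to an $L^2$ estimate on $x u_0$ times $f_{t,z,\lambda}$, which is already controlled by Lemma~\ref{lem:f-bound}.

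The first step I would establish is the commutation identity
\[
X^*\Pi g=\Pi(xg),\qquad g\in L^2(\R)\text{ with }xg\in L^2(\R).
\]
On the Fourier side this is immediate: on $(0,\infty)$ both sides have Fourier transform $\ii\hat g'(\xi)$. Equivalently, the boundary contribution $I_+(\Pi g)/(2\ii\pi)$ built into the definition of $X^*$ exactly cancels the boundary term $\ii\hat g(0^+)/(2\pi)$ produced by integration by parts in $x\Pi g$. Applying the identity with $g=u_0$ (valid since $xu_0\in L^2$) yields $X^*\Pi u_0=\Pi(xu_0)\in L^2(\R)$. Applying it with $g=u_0 f_{t,z,\lambda}$ yields $X^* T_{u_0}f_{t,z,\lambda}=\Pi(xu_0\cdot f_{t,z,\lambda})$; this is legitimate because the one--dimensional embedding $H^1(\R)\hookrightarrow L^\infty(\R)$ combined with the hypothesis $xu_0\in H^1(\R)$ gives $xu_0\in L^\infty$, so that $xu_0\cdot f_{t,z,\lambda}\in L^2(\R)$.

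Combining these two identities produces the pointwise--in--$\lambda$ bound
\[
\|X^* q_{t,z,\lambda}\|_{L^2_x(\R)}\leq \|xu_0\|_{L^2}+2t\,\|xu_0\|_{L^\infty}\,\|f_{t,z,\lambda}\|_{L^2_x(\R)}.
\]
Squaring, integrating over $\lambda\in\Lambda$, and invoking Lemma~\ref{lem:f-bound} to bound $\|f_{t,z,\lambda}\|_{L^2_{x,\lambda}(\R\times\Lambda)}^2\leq C/t$, the right--hand side is controlled by
\[
2|\Lambda|\,\|xu_0\|_{L^2}^2+8t^2\|xu_0\|_{L^\infty}^2\cdot\frac{C}{t}=O(t),
\]
which is exactly the desired estimate~\eqref{qH1}. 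I do not expect a serious obstacle: the factor $\sqrt{t}$ is inherited directly from the half power of $t$ already gained in Lemma~\ref{lem:f-bound}, and the only point requiring care is the bookkeeping of the $I_+$ boundary terms at $\xi=0^+$ when verifying the commutation $X^*\Pi=\Pi x$.
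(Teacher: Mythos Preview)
Your proof is correct and follows essentially the same approach as the paper: the key identity $X^*\Pi g=\Pi(xg)$ (equivalently $X^*T_{u_0}f=T_{xu_0}f$) together with $xu_0\in H^1\subset L^\infty$ reduces the estimate to Lemma~\ref{lem:f-bound}. The only difference is that you spell out the verification of this commutation identity on the Fourier side, whereas the paper simply states it.
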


\begin{proof}
Combining Lemma \ref{lem:f-bound} with \eqref{qf} leads to
\begin{equation}\label{qL2}
\Vert q_{t,z-2t\lambda}\Vert _{L^2_{x,\lambda}(\R\times\Lambda)}\leq C\sqrt{t}.
\end{equation}
Furthermore, since $X^*T_{u_0}f=T_{xu_0}f$ and $xu_0\in H^1\subset L^\infty $,  Lemma \ref{lem:f-bound} and \eqref{qf} lead to the upper bound on $\| X^*q_{t,z-2t\lambda}\|_{L^2(\R\times\Lambda)}$.
\end{proof}
\begin{remark}\label{xuH1} The above proof of Lemma \ref{lem:q-bound} is the only place in our arguments where we use specifically the assumption $xu_0\in H^1$, and only in the weaker form $xu_0\in L^\infty$. In particular the case of the initial datum $u_0(x)=c{\bf 1}_{(-1,1)}(x)$ can be treated.
\end{remark}
\subsection{Weak convergence}

\begin{lemma}\label{lem:q-I+}
Let $\Lambda:=[\Lambda_-,\Lambda_+]\subset(0,\infty)$.  The following properties hold. 
\begin{enumerate} 
\item For every bounded family $(\psi_t)_t\in L^\infty (\Lambda)$, for every function $\phe \in L^2_+(\R)$, 
\[\int_{\R\times \Lambda}\sqrt t f_{t,z-2t\lambda}(x)\phe (x)\psi_t(\lambda)\dd x\dd \lambda \td_t,{+\infty} 0.\]
\item For every $\psi \in L^2(\Lambda)$, as $t\to+\infty$,
\[\int_\Lambda I_+(q_{t,z-2t\lambda})\psi (\lambda)\dd \lambda =o(\sqrt{t}).\]
\end{enumerate}
\end{lemma}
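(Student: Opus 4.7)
The plan is to deduce part (2) from part (1) via an algebraic identity, then to prove part (1) by combining a density argument with a resolvent identity, supplemented by an oscillatory analysis for the remaining subspace of $L^2_+(\R)$.

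For part (2), starting from $q_{t,z,\lambda}=\Pi u_0-2tT_{u_0}f_{t,z,\lambda}$ and noting that $\widehat{T_{u_0}f}(0^+)=\tfrac{1}{2\pi}(\widehat{u_0}*\widehat f)(0)=\langle f_{t,z,\lambda},\Pi u_0\rangle$ (using $u_0$ real, via Plancherel), one obtains
\[
I_+(q_{t,z,\lambda})=I_+(\Pi u_0)-2t\,\langle f_{t,z,\lambda},\Pi u_0\rangle.
\]
Hence $\int_\Lambda I_+(q)\psi\,d\lambda = O(1)-2t\int_\Lambda\langle f,\Pi u_0\rangle\psi\,d\lambda$, and the $o(\sqrt t)$ conclusion reduces to showing $\sqrt t\int_\Lambda\langle f_{t,z,\lambda},\Pi u_0\rangle\psi(\lambda)\,d\lambda\to 0$. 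For $\psi\in L^\infty(\Lambda)$ this is exactly part (1) with $\phi=\Pi u_0$ and $\psi_t:=\psi$; the general $\psi\in L^2(\Lambda)$ case follows by truncating $\psi=\psi_M+\psi_M^{\mathrm{tail}}$ with $\psi_M\in L^\infty$, and dominating the tail via $\|\langle f,\Pi u_0\rangle\|_{L^2(\Lambda)}=O(1/\sqrt t)$, which is Cauchy-Schwarz combined with Lemma \ref{lem:f-bound}.

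For part (1), the same Cauchy-Schwarz estimate shows that $\phi\mapsto\sqrt t\int_\Lambda\langle f_{t,z,\lambda},\phi\rangle\psi_t(\lambda)\,d\lambda$ is uniformly bounded in $t$ on $L^2_+(\R)$, so it suffices to verify convergence on a dense subspace. The engine is a resolvent identity: if $\phi_1(\cdot,\lambda)\in L^2_+(\R)\cap D(X^*)$ is uniformly bounded in $\lambda\in\Lambda$ and solves $(L_{u_0}-\lambda)\phi_1=\phi$, then pairing $(X^*-2t(L_{u_0}-\lambda)-z)f_{t,z,\lambda}=\Pi u_0$ with $\phi_1$, using self-adjointness of $L_{u_0}$ and the symmetry defect $\langle X^*f,\phi_1\rangle=\langle f,X^*\phi_1\rangle-\tfrac{i}{2\pi}I_+(f)\overline{I_+(\phi_1)}$, yields
\[
\langle f_{t,z,\lambda},\phi\rangle=\frac{1}{2t}\bigl[\langle f,X^*\phi_1\rangle-\tfrac{i}{2\pi}I_+(f)\overline{I_+(\phi_1)}-z\langle f,\phi_1\rangle-\langle\Pi u_0,\phi_1\rangle\bigr].
\]
Combined with Lemma \ref{lem:f-bound} and the bound $\|I_+(f)\|_{L^2(\Lambda)}=O(1/\sqrt t)$ from \eqref{eq:I+bound}, this gives $\|\langle f,\phi\rangle\|_{L^2(\Lambda)}=O(1/t)$, whence $\sqrt t\int_\Lambda\langle f,\phi\rangle\psi_t\,d\lambda\to 0$. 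The choice $\phi_1=\phi_j/(\lambda_j-\lambda)$ handles $\phi=\phi_j$ (well-defined since $\lambda_j<0<\Lambda_-$, and $\phi_j\in D(X^*)$ by Lemma \ref{X*phi}), and $\widetilde{\phi_1}^-(\mu,\lambda):=\widetilde\phi^-(\mu)/(\mu-\lambda)$ handles continuous-spectrum elements whose distorted Fourier transform $\widetilde\phi^-$ is smooth and vanishes in a neighborhood of $\Lambda$.

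The main obstacle is that the $L^2_+(\R)$-closure of this subspace omits elements $\phi$ whose distorted Fourier content $\widetilde\phi^-$ is nonzero on $\Lambda$, where $(\mu-\lambda)^{-1}$ is genuinely singular and the algebraic trick breaks down. For this missing portion the decay must come from oscillation in $\lambda$: I would substitute the oscillatory-integral representation \eqref{hp} for $h_{t,z,\lambda}$ into $\sqrt t\langle f_{t,z,\lambda},\phi\rangle$ and apply stationary-phase analysis in $\lambda$ around the critical point $\lambda=\xi$; together with the a priori bound $\|X^*q_{t,z,\lambda}\|_{L^2_{x,\lambda}(\R\times\Lambda)}=O(\sqrt t)$ from Lemma \ref{lem:q-bound} and the distorted-Fourier identities of Appendix A, this should produce the extra $t^{-1/2}$ gain needed. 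Making this oscillatory estimate rigorous and uniform, including the treatment of the endpoints of $\Lambda$, is the delicate technical step.
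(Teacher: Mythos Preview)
Your reduction of part (2) to part (1) via the identity $I_+(q_{t,z,\lambda})=I_+(\Pi u_0)-2t\langle f_{t,z,\lambda},\Pi u_0\rangle$ is correct and matches the paper.

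For part (1), your resolvent-identity strategy has a genuine gap, which you yourself identify: it only treats $\phi$ for which $(L_{u_0}-\lambda)^{-1}\phi$ exists in $L^2$ uniformly over $\lambda\in\Lambda$, and this excludes precisely the $\phi$ with nonzero distorted-Fourier content on $\Lambda$. The oscillatory fix you propose is not carried out and does not appear feasible as stated: since $\psi_t$ is merely bounded (not smooth) in $\lambda$, stationary phase in the $\lambda$-integral has no grip, and the $\lambda$-dependence of $\widehat q_{t,z,\lambda}$ is itself implicit and coupled to $t$. There is no evident source of an extra $t^{-1/2}$ from this route.

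The paper bypasses all of this with a short weak-compactness argument. By Lemma~\ref{lem:f-bound} the family $F_t:=\psi_t(\lambda)\sqrt t\,f_{t,z,\lambda}$ is bounded in $L^2(\R\times\Lambda)$; dividing equation \eqref{eq:fhat} by $t$ and passing to the weak limit along any subsequence, any limit $F_\infty$ satisfies $(L_{u_0}-\lambda)F_\infty(\cdot,\lambda)=0$ in $L^2_{x,\lambda}$. Taking the distorted Fourier transform in $x$ gives $(\eta-\lambda)\widetilde{F_\infty}(\eta,\lambda)=0$ in $L^2_{\eta,\lambda}$, forcing $\widetilde{F_\infty}=0$ off the measure-zero diagonal; the eigenfunction components vanish since $\lambda_j<0<\lambda$. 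Hence $F_\infty=0$ by the distorted Plancherel identity \eqref{Planchereldist}, so $F_t\rightharpoonup 0$ in $L^2(\R\times\Lambda)$, which is exactly part (1). This uses nothing beyond the $L^2$ bound you already invoked, and handles all $\phi\in L^2_+(\R)$ simultaneously without any density or oscillatory argument.
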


\begin{proof} Let us prove the first property. We already know from Lemma \ref{lem:f-bound} that the family $((\xi,\lambda)\mapsto\sqrt{t}\widehat{f}_{t,z-2t\lambda})_{t}$ is bounded in $L^2_{\xi,\lambda}( \R\times\Lambda)$, so that $\widehat{F}_{t,z,\lambda}:=\psi_t\sqrt{t}\widehat{f}_{t,z-2t\lambda}$ is bounded as well. Hence up to a subsequence, we can assume the weak convergence $\widehat{F}_{t,z,\lambda}\rightharpoonup \widehat{F}_{\infty, z,\lambda}$  in $L^2_{\xi,\lambda}( \R\times\Lambda)$ as $t\to+\infty$. Then passing to the weak limit in~\eqref{eq:fhat} after dividing by $\sqrt{t}$, we find that $\widehat{F}_{\infty, z,\lambda}$ is a solution to
\[
-2(\xi-\lambda)\widehat{F}_{\infty,z,\lambda}+\frac{1}{\pi}\widehat{u_0}\ast \widehat{F}_{\infty,z,\lambda }=0.
\]
We recognize this as the Fourier transform of the equation $L_{u_0}F_{\infty,z,\lambda}=\lambda F_{\infty,z,\lambda}$. Taking the distorted Fourier transform of both sides, we find that in $L^2_{\eta,\lambda}( (0,\infty)\times\Lambda)$,
\[
\eta \widetilde{F}_{\infty,z,\lambda}(\eta)=\lambda \widetilde{F}_{\infty,z,\lambda}(\eta).
\]
This implies that $\widetilde{F}_{\infty,z,\lambda}= 0$ outside the diagonal $\lambda=\eta$. Since $\widetilde{F}_{\infty,z,\lambda}\in L^2_{\eta,\lambda}( (0,\infty)\times\Lambda)$, we conclude that $\widetilde{F}_{\infty,z,\lambda}=0$, hence $F_{\infty,z,\lambda}=0$ and the first property follows.

Then  we write
\[
I_+(q_{t,z-2t\lambda})
	=I_+(\Pi u_0)-2tI_+(T_{u_0}f_{t,z-2t\lambda})
	=I_+(\Pi u_0)-2\sqrt t\langle \sqrt t f_{t,z-2t\lambda},\Pi u_0\rangle,
\]
and the second property follows. The proof is complete.
\end{proof}

\section{Proof of Properties \eqref{loc} and \eqref{distorted}}\label{3.8}


\subsection{Proof of the localization Property~\eqref{loc}}

The first step will be to show that the contribution~\eqref{loc} from the integral away from  $\xi=\lambda$ vanishes as $t\to\infty$.


We recall that $\widehat{q}_{t,z-2t\lambda}/\sqrt{t}$ is bounded in $L^2_\lambda(\Lambda,H^1_\xi(0,\infty))$ thanks to Lemma~\ref{lem:q-bound}, in particular it is bounded in  $L^2_\lambda(\Lambda,L^{\infty}_\xi(0,\infty))$.
Recall the smooth cutoff function $\chi(\xi)$ defined relative to $\Lambda=[\Lambda_-,\Lambda_+]\subset (0,\infty)$ at the beginning of Section~\ref{sec:radiationlimit}.
By integration by parts, we have
\begin{multline}\label{eq:IPP0}
\sqrt{2t}\int_0^{\infty}\ee^{\ii t (\xi-\lambda)^2+\ii z \xi}\widehat{q}_{t,z-2t\lambda}(\xi)(1-\chi(\xi))\frac{\dd\xi}{2\pi}\\
	= -\ii\frac{\ee^{\ii t \lambda^2}I_+(q_{t,z-2t\lambda})}{ 2\pi \sqrt{2t}(\lambda-z/2t)}
	-\frac{\ii}{\sqrt{2t}}\int_0^{\infty}\ee^{\ii t (\xi-\lambda)^2+\ii z \xi}\frac{\dd}{\dd \xi}\left(\frac{ \widehat{q}_{t,z-2t\lambda}(\xi)(1-\chi(\xi))}{\xi-\lambda+z/2t}\right)\frac{\dd\xi}{2\pi}.
\end{multline}
The first term in the right-hand side goes to zero weakly in $L^2(\Lambda)$ as $t\to+\infty$ thanks to Lemma~\ref{lem:q-I+}. Concerning the second term in the right-hand side, we split the integral depending on where the derivative falls. For instance, we detail the upper bound on
\[
\frac{1}{\sqrt{2t}}\int_0^{\infty}\ee^{\ii t (\xi-\lambda)^2+\ii z \xi}\frac{( \widehat{q}_{t,z-2t\lambda})'(\xi)(1-\chi(\xi))}{\xi-\lambda+z/2t}\frac{\dd\xi}{2\pi}.
\]
We decompose $\widehat{q}_{t,z-2t\lambda}=\widehat{\Pi u_0}-2t \widehat{T_{u_0} f_{t,z-2t\lambda}}$. The term corresponding to $\widehat{\Pi u_0}$ tends to zero thanks to the Cauchy-Schwarz inequality
\[
\frac{1}{\sqrt{2t}}\int_0^{\infty}\ee^{\ii t (\xi-\lambda)^2+\ii z \xi}\frac{(\widehat{\Pi u_0})'(\xi)(1-\chi(\xi))}{\xi-\lambda+z/2t}\frac{\dd\xi}{2\pi}
	\leq \frac{1}{\sqrt{2t}}\left\|\frac{1-\chi(\xi)}{\xi-\lambda+z/2t}\right\|_{L^2_\xi}\|(\widehat{\Pi u_0})'\|_{L^2_\xi},
\]
where $\|(\widehat{\Pi u_0})'\|_{L^2_\xi}$ is finite since $xu_0\in L^2_x$ and $\left\|(1-\chi(\xi))/(\xi-\lambda+z/2t)\right\|_{L^2_\xi}$ is  finite since $\xi$ is located away from $\lambda\in\Lambda$ on the support of $1-\chi$. The term corresponding to $2t\widehat{T_{u_0}f_{t,z-2t\lambda}}$ can be written as
\begin{multline}\label{eq:IPP}
\int_0^{\infty}\ee^{\ii t (\xi-\lambda)^2+\ii z \xi}\frac{(\sqrt{2t}\widehat{T_{u_0} f_{t,z-2t\lambda}})'(\xi)(1-\chi(\xi))}{\xi-\lambda+z/2t}\frac{\dd\xi}{2\pi}\\
	=\int_0^{\infty}\ee^{\ii t (\xi-\lambda)^2+\ii z \xi}\frac{1-\chi(\xi)}{\xi-\lambda+z/2t}\int_0^{\infty}\sqrt{2t}\widehat{f}_{t,z-2t\lambda}(\eta)(\widehat{u_0})'(\xi-\eta)\dd \eta \frac{\dd\xi}{2\pi}.
\end{multline}
Thanks to the weak convergence of $x\mapsto \sqrt{2t}f_{t,z-2t\lambda}$ to zero in $L^2_{x,\lambda}(\R\times\Lambda)$, we know that pointwise in $\xi$, there holds $\int_0^{\infty}\sqrt{2t} \widehat{f}_{t,z-2t\lambda}(\eta)(\widehat{u_0})'(\xi-\eta)\dd \eta\rightharpoonup 0$ in $L^2(\Lambda)$ as $t\to+\infty$. Moreover, we have from the Young inequality
\begin{multline*}
\left|\frac{\ee^{\ii t (\xi-\lambda)^2+\ii z \xi}(1-\chi(\xi))}{\xi-\lambda+z/2t}\int_0^{\infty}\sqrt{2t}\widehat{f}_{t,z-2t\lambda}(\eta)(\widehat{u_0})'(\xi-\eta)\dd \eta	\right| \\
	\leq C\left|\frac{\ee^{\ii z \xi}(1-\chi(\xi))}{\xi-\lambda}\right| \|\sqrt{2t}\widehat{f}_{t,z-2t\lambda}\|_{L^2_\xi}\|(\widehat{u_0})'\|_{L^2_\xi},
\end{multline*}
where $\|\sqrt{2t}\widehat{f}_{t,z-2t\lambda}\|_{L^2}$ is bounded and $\left|\frac{\ee^{\ii z \xi}(1-\chi(\xi))}{\xi-\lambda}\right|\in L^1_\xi$ thanks to the fact that $\mathrm{Im}(z)>0$. We can therefore apply the dominated convergence theorem and get the weak $L^2(\Lambda)$ convergence: 
\[
\int_0^{\infty}\ee^{\ii t (\xi-\lambda)^2+\ii z \xi}\frac{(\sqrt{2t}\widehat{T_{u_0}f_{t,z-2t\lambda}})'(\xi)(1-\chi(\xi))}{\xi-\lambda+z/2t}\frac{\dd\xi}{2\pi}\tdw_t,\infty 0.
\]
In the second term of the right-hand side of~\eqref{eq:IPP0}, the other terms obtained when the derivative falls either on $1-\chi$ or on $\frac{1}{\xi-\lambda+z/2t}$ are treated similarly. 

\subsection{Proof of Property \eqref{distorted}}
In this paragraph, we focus on the second term \eqref{distorted}. From now on, we shall make an extensive use of stationary phase asymptotics, which we recall in the following lemma, see e.g. \cite{Stein93}.

\begin{lemma}[The van der Corput estimate and  stationary phase asymptotics]\label{lem:phasestat}
There exists a universal constant $\kappa_0$ such that, for every open interval $I$, for every $C^2$ function $\phi :I\to \R$ such that $\phi ''\geq 1$ on $I$, for every  continuous function $b\in L^1(I)$ such that $b'\in L^1(I)$, 
\begin{equation} \label{VdC}
\forall \tau \geq 1,\ \forall x_0\in I,\  \left |\int_I\ee^{\ii \tau \phi (x)}b(x)\dd x\right |\leq \kappa_0\tau^{-1/2}\left (|b(x_0)|+\int_I |b'(x)|\dd x\right ).
\end{equation}
Furthermore, if $\phi$ has a critical point $x_c$ in $I$ and if moreover $b\in H^1(I)$ is compactly supported, 
\begin{equation}\label{phasestat}
\int_I\ee^{\ii \tau \phi (x)}b(x)\dd x=\left (\frac{2\pi}{\tau \phi''(x_c)}\right )^{1/2}\ee^{\ii \pi/4}(b(x_c)+o(1)), 
\quad
\tau \to \infty.
\end{equation}
\end{lemma}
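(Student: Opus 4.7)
The plan is to treat the two claims separately, establishing \eqref{VdC} first and then bootstrapping to the stationary-phase expansion \eqref{phasestat} using \eqref{VdC} as a black box for error control. For \eqref{VdC}, I would start from the classical van der Corput bound: under $\phi''\geq 1$ on $I$ there exists an absolute constant $C_0$ such that $\sup_{J\subset I}|\int_J \ee^{\ii\tau\phi(x)}\dd x|\leq C_0 \tau^{-1/2}$, the supremum being over all subintervals $J$. Its short proof separates the region $\{|\phi'|\leq \tau^{-1/2}\}$, which has length at most $2\tau^{-1/2}$ by strict monotonicity of $\phi'$, from its complement, on which one integrates by parts once against $\partial_x\ee^{\ii\tau\phi}=\ii\tau\phi'\ee^{\ii\tau\phi}$ and exploits the monotonicity of $1/(\tau\phi')$. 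Then I would write $b(x)=b(x_0)+\int_{x_0}^x b'(y)\dd y$ and apply Fubini:
\[
\int_I \ee^{\ii\tau\phi(x)}b(x)\dd x=b(x_0)\int_I \ee^{\ii\tau\phi(x)}\dd x+\int_I b'(y)\left(\int_{I(y)}\ee^{\ii\tau\phi(x)}\dd x\right)\dd y,
\]
where $I(y)$ equals $[y,\sup I]$ if $y>x_0$ and $[\inf I,y]$ if $y<x_0$. Applying the van der Corput bound to both inner integrals yields \eqref{VdC} with $\kappa_0=C_0$.

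For \eqref{phasestat}, I would first treat smooth compactly supported $b$ by a Morse reduction, then extend to $b\in H^1$ by density. Near $x_c$ the Morse lemma supplies a local diffeomorphism $y=\psi(x-x_c)$ with $\psi(0)=0$, $\psi'(0)=1$, for which $\phi(x)-\phi(x_c)=\frac{\phi''(x_c)}{2}y^2$. Let $\chi_V$ be a smooth cutoff equal to $1$ near $x_c$ and supported in the neighborhood on which the Morse reduction applies. On $(1-\chi_V)b$ there is no critical point of $\phi$, and two integrations by parts produce an $O(\tau^{-1})$ contribution. The piece $\chi_V b$ becomes, after the change of variables, $\ee^{\ii\tau\phi(x_c)}\int_\R \ee^{\ii\tau\phi''(x_c)y^2/2}\beta(y)\dd y$ with $\beta\in C^\infty_c$ and $\beta(0)=b(x_c)$; writing $\beta(y)=\beta(0)+y\tilde\beta(y)$, evaluating the Fresnel integral $\int_\R \ee^{\ii\tau\sigma y^2/2}\dd y=\sqrt{2\pi/(\tau\sigma)}\,\ee^{\ii\pi/4}$ for $\sigma>0$, and absorbing the $y\tilde\beta(y)$ term via a further integration by parts, completes the proof of \eqref{phasestat} for smooth $b$. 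For general $b\in H^1(I)$ with compact support, I would pick smooth $b_n\to b$ in $H^1$: the embedding $H^1(\R)\hookrightarrow C^0(\R)$ gives $b_n(x_c)\to b(x_c)$, and \eqref{VdC} applied to $b-b_n$ at $x_0=x_c$ bounds the discrepancy by $\kappa_0\tau^{-1/2}(|b(x_c)-b_n(x_c)|+\|b'-b_n'\|_{L^1})$, with $\|b'-b_n'\|_{L^1}\to 0$ thanks to compact support. Dividing through by $\sqrt{2\pi/(\tau\phi''(x_c))}\,\ee^{\ii\pi/4}$ and letting $n\to\infty$ followed by $\tau\to\infty$ yields \eqref{phasestat} in full generality.

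The main obstacle I anticipate is keeping accurate track of the overall constant and phase $\sqrt{2\pi/(\tau\phi''(x_c))}\,\ee^{\ii\pi/4}$ through the Morse change of variables and the Fresnel evaluation—routine but easy to mis-sign. Everything else is comparatively mechanical: the $H^1$ density argument is essentially free once \eqref{VdC} is in hand, and the non-critical region is absorbed into the $o(1)$ remainder by a standard integration by parts.
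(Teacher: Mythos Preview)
The paper does not prove this lemma; it is stated as classical and referred to Stein's \emph{Harmonic Analysis}. Your plan follows exactly the standard textbook route (van der Corput sublevel/integration-by-parts bound, then Fubini against $b'$ for \eqref{VdC}; Morse reduction plus Fresnel evaluation, then $H^1$-density for \eqref{phasestat}), so there is nothing to compare.

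One technical point deserves attention. The hypothesis is only $\phi\in C^2$, and for a merely $C^2$ phase the Morse change of variables $y=\psi(x-x_c)$ is in general only $C^1$: you get $\psi'=\phi'/\psi$ continuous with $\psi'(0)=\sqrt{\phi''(x_c)}$, but $\psi''$ need not exist at $0$. Consequently the transformed amplitude $\beta(y)=(b\chi_V)(x(y))\,/\,\psi'(x(y))$ is only $C^0$ at $y=0$ (though $C^1$ away from $0$), and the step ``write $\beta(y)=\beta(0)+y\tilde\beta(y)$ and integrate by parts once more'' requires $\tilde\beta'\in L^1$, which is not guaranteed. This is easily patched---either note that every phase actually used in the paper is a polynomial (so $\phi\in C^\infty$ and your argument goes through verbatim), or replace the Morse step by the direct estimate: split at $|x-x_c|\lessgtr\delta$, use one integration by parts away from $x_c$ to get $O((\tau\delta)^{-1})$, and on $|x-x_c|<\delta$ compare $\phi$ to its quadratic Taylor polynomial with $\delta=\delta(\tau)\to 0$ chosen suitably. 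A second minor point: for unbounded $I$ your Fubini swap is not absolutely convergent as written; truncate to $I\cap[-R,R]$, apply the bound uniformly in $R$, and pass to the limit by dominated convergence in the $b'$-integral. With these two adjustments your argument is complete.
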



In the integral~\eqref{distorted},  we first recognize the distorted Fourier transform of $\Pi u_0$ in the integral over $x$:
\[
\sqrt{2t}\int_{0}^{\infty}\int_{\R}u_0(x)\ee^{\ii t(\xi-\lambda)^2+\ii z\xi } \chi (\xi)\overline{m}_-\left(x,\xi\right)\dd x \frac{\dd \xi}{2\pi}=\sqrt{2t}\int_{0}^{\infty} \ee^{\ii t(\xi-\lambda)^2+\ii z\xi} \chi(\xi)\widetilde{\Pi u_0}\left(\xi\right)\frac{\dd \xi}{2\pi}.
\]
In order to apply the stationary phase lemma, we use the fact that $\widetilde{\Pi u_0}\in H^1_\xi(\eps,\infty)$ for every $\eps>0$ (see Lemma~\ref{lem:distortedH1} below), and in particular it is continuous on $(0,\infty)$. 
Since $\chi$ is supported on $[\Lambda_-/4,4\Lambda_+]$, we can apply \eqref{phasestat} from Lemma \ref{lem:phasestat} and deduce that
\[
\sqrt{2t}\int_{0}^{\infty}\int_{\R}u_0(x)\ee^{\ii t(\xi-\lambda)^2+\ii z\xi } \chi (\xi)\overline{m}_-\left(x,\xi\right)\dd x \frac{\dd \xi}{2\pi}
	\td_t,{+\infty} \frac{\ee^{\ii\pi/4}}{\sqrt{2\pi}} \ee^{\ii \lambda z}\widetilde{\Pi u_0}(\lambda).
\]
The convergence is  clearly uniform in $\lambda\in\Lambda$.

\section{Proof of Property \eqref{cancel}}\label{3.9}

In this section, we prove the remainder term estimate~\eqref{cancel}, which turns out to be the most delicate one. 
Again recalling the cutoff function $\chi$ defined at the beginning of Section~\ref{sec:radiationlimit}, let us set
\[ p_{t,z,\lambda}(x)=\int_{\R} \ee ^{-\ii\frac{\alpha ^2}{2}-\ii\frac{\overline z\alpha }{\sqrt{2t}}}\chi\left(\lambda+\frac{\alpha}{\sqrt{2t}}\right)r_{t,z,\lambda}(x,\alpha )\dd\alpha .\]
 Our goal is to prove that the following quantity tends to $0$ weakly in $L^2(\Lambda )$ as $t\to \infty$,
 \begin{equation}\label{I}
 \la q_{t,z-2t\lambda}, p_{t,z,\lambda}\ra = I_1(t,z,\lambda )-I_2(t,z,\lambda),
 \end{equation}
where
\begin{align}\label{I_1}
 I_1(t,z, \lambda)&:=\la \Pi u_0, p_{t,z,\lambda}\ra ,\\
\label{I_2}
 I_2(t,z, \lambda)&:=2t\la T_{u_0}f_{t,z-2t\lambda}, p_{t,z,\lambda}\ra .
 \end{align}
\subsection{Decomposition of $p_{t,z,\lambda}$.}

 First, we derive a new expression for  $p_{t,z,\lambda}$.
 The starting point is the following observation, coming from the fact that  $m_-$ is a generalized eigenfunction of $L_{u_0}$:
 \[ T_{u_0}m_-(y,\eta)=\ee ^{\ii \eta y}Da(y,\eta ),\] 
 where we set $a(y,\eta ):= \ee^{-\ii\eta y}m_-(y,\eta).$ Then $a(y,\eta)\to 1$ as $y\to -\infty$.
 Plugging this identity into the definition \eqref{r} of $r_{t,z,\lambda}(x,\alpha )$, and integrating by parts in $y$, we obtain
 \begin{align*}
 r_{t,z,\lambda}(x,\alpha )&=\ii\int_{-\infty}^x \ee ^{\ii\lambda x}Da\left (y,\lambda +\frac{\alpha }{\sqrt{2t}}\right )\left ( \mathrm{e}^{-\ii\overline z\frac{(x-y)}{2t}+\ii\frac{(x^2-y^2)}{4t}+\ii\frac{\alpha y}{\sqrt{2t}}}- \mathrm{e}^{\ii\frac{\alpha x}{\sqrt{2t}}}     \right )\dd y\\
 &=\ii\int_{-\infty}^x \ee ^{\ii\lambda x}\left (a\left (y,\lambda +\frac{\alpha }{\sqrt{2t}}\right )-1\right )\mathrm{e}^{-\ii\overline z\frac{(x-y)}{2t}+\ii\frac{(x^2-y^2)}{4t}+\ii\frac{\alpha y}{\sqrt{2t}}} \left [\frac{-\overline z}{2t}+\frac{y}{2t}-\frac{\alpha}{\sqrt{2t}}\right ]\dd y.
 \end{align*}
  Plugging this identity into the definition of $p_{t,z,\lambda}(x)$, and integrating by parts in $\alpha $, we observe a cancellation leading to
\begin{equation}\label{eq:p}
 p_{t,z,\lambda}(x)= -\ee ^{ \ii\lambda x}\int_{\R} \int_{-\infty}^x \ee^{-\ii\frac{\alpha ^2}2-\ii\frac{\overline z\alpha}{\sqrt{2t}}-\ii \overline z\frac{(x-y)}{2t}+\ii\frac{(x^2-y^2)}{4t}+\ii\frac{\alpha y}{\sqrt{2t}}}
 b\left (y,\lambda +\frac{\alpha }{\sqrt{2t}}\right )\frac{\dd y\dd \alpha}{2t} .
 \end{equation}
 where 
 \begin{equation}\label{def:b}
 b(y,\eta):=\chi '(\eta)(a(y,\eta)-1)+\chi (\eta)\partial_\eta a(y,\eta).
 \end{equation} 
Identity \eqref{eq:p} is a crucial step in our proof. According to Lemma~\ref{lem:a-structure} below, we isolate the term with no decay as $y\to-\infty$ in the expression of $b$, so that we  define
\begin{equation}\label{b-decompose}
b_0(y,\eta):=-\frac{1}{2\pi}\frac{\ee^{-\ii\eta y}}{\eta}\overline{\widetilde{\Pi u_0}}(\eta)\chi (\eta)
 	\quad  \text{ and }\quad
b_1(y,\eta):=b(y,\eta)-b_0(y,\eta).
\end{equation}
 
The part of~\eqref{eq:p} corresponding to $b_0$ decouples the variables $y$ and $\alpha$, so that we define 
\begin{align}
\label{eq:J0} J^0_{t,z,\lambda}
	&:=\int_\R \ee^{-\ii \frac{\alpha^2}2-\ii \frac{\overline z\alpha }{\sqrt{2t}}} \left.\frac{\overline{\widetilde{\Pi u_0}}(\eta)}{\eta}\chi (\eta)\right|_{\eta =\lambda +\frac{\alpha}{\sqrt{2t}}}
	\frac{\dd\alpha}{2\pi},
	\\
\label{eq:G0}  G^0_{t,z,\lambda}(x)
	&:=\int_{-\infty}^x \ee^{-\ii \overline z\frac{(x-y)}{2t}-\ii\frac{y^2}{4t}-\ii\lambda y}\frac{\dd y}{\sqrt{2t}}.
\end{align}
Finally, we denote the part corresponding to $b_1$ by
\begin{equation}\label{eq:P1-def} P^1_{t,z,\lambda}(x)
 	:= \int_\R \int_{-\infty}^x \ee^{-\ii \Phi_{t,z}(\alpha,x,y)}b_1\left (y,\lambda+\frac{\alpha}{\sqrt{2t}}\right ) \frac{\dd y\dd \alpha} {\sqrt{2t}},
\end{equation}
where for further reference, we introduce the phase function in the integral in the right hand side of \eqref{eq:p}:
 \begin{equation}\label{Phi,def} 
 \Phi _{t,z}(\alpha,x,y):=\frac{\alpha ^2}2+\frac{\overline z\alpha}{\sqrt{2t}}+\frac{\overline z(x-y)}{2t}-\frac{x^2-y^2}{4t}-\frac{\alpha y}{\sqrt{2t}}.
 \end{equation}

We have proven the following proposition.
\begin{proposition}[Decomposition of $p_{t,z,\lambda}$] We have a decomposition
 \begin{align}
 p_{t,z,\lambda}(x)&= p^0_{t,z,\lambda}(x)\ee^{\ii t \lambda^2}+ p^1_{t,z,\lambda}(x), \label{eq:p-decompo}\\
 \label{eq:p0} p^0_{t,z,\lambda}(x)&= \ee^{-\ii t\lambda^2+\ii \lambda x+\ii \frac{x^2}{4t}}J^0_{t,z,\lambda} G^0_{t,z,\lambda}(x)/\sqrt{2t},\\
 \label{eq:p1}  p^1_{t,z,\lambda}(x)&= -  \ee^{\ii \lambda x} P^1_{t,z,\lambda}(x)/\sqrt{2t}.                    
 \end{align}
\end{proposition}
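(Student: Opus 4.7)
The plan is to assemble the claimed decomposition by a sequence of two integrations by parts and a clean splitting of the resulting amplitude, collecting factors carefully so that the stationary phase in $\alpha$ near $\alpha=0$ is made manifest. All three identities \eqref{eq:p-decompo}--\eqref{eq:p1} are really just re-readings of the formulas $p_{t,z,\lambda}$, $J^0_{t,z,\lambda}$, $G^0_{t,z,\lambda}$, $P^1_{t,z,\lambda}$, so the work is to verify two explicit manipulations and a bookkeeping step, with convergence/boundary-term issues at the margins.

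First, I would start from the integral~\eqref{r} defining $r_{t,z,\lambda}(x,\alpha)$ and use that $m_-$ is a generalized eigenfunction of $L_{u_0}$, so that, writing $a(y,\eta):=e^{-i\eta y}m_-(y,\eta)$, one has $T_{u_0}m_-(y,\eta)=e^{i\eta y}Da(y,\eta)$ with $a(y,\eta)\to 1$ as $y\to-\infty$. Integrating by parts in $y$ against the second factor $[\,e^{-i\bar z(x-y)/(2t)+i(x^2-y^2)/(4t)}-e^{i\alpha(x-y)/\sqrt{2t}}\,]$, I move the $D=-i\partial_y$ onto the exponentials; the boundary at $y=x$ vanishes because the bracket vanishes identically there, and the boundary at $y=-\infty$ vanishes because $a(y,\eta)-1$ has the required decay (to be supplied by Lemma~\ref{lem:a-structure}). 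This gives the displayed expression with the bracketed factor $[-\bar z/(2t)+y/(2t)-\alpha/\sqrt{2t}]$ multiplying $a(y,\eta)-1$.

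Next I would plug this into the definition of $p_{t,z,\lambda}$ and integrate by parts in $\alpha$. The key observation is that the bracket $[-\bar z/(2t)+y/(2t)-\alpha/\sqrt{2t}]$ is, up to a factor of $i$, exactly $\partial_\alpha$ of the phase $-\alpha^2/2-\bar z\alpha/\sqrt{2t}+i\alpha y/\sqrt{2t}$ in~\eqref{eq:p}. Therefore integrating by parts in $\alpha$ produces a cancellation of that bracket and the remainder terms are precisely where the $\alpha$-derivative falls on $\chi(\lambda+\alpha/\sqrt{2t})(a(y,\lambda+\alpha/\sqrt{2t})-1)$, which by the chain rule is exactly $\chi'(\eta)(a(y,\eta)-1)+\chi(\eta)\partial_\eta a(y,\eta)$ evaluated at $\eta=\lambda+\alpha/\sqrt{2t}$, i.e.\ $b(y,\eta)$ from \eqref{def:b}. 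Boundary terms at $\alpha=\pm\infty$ vanish because $\chi$ and $a-1$ are uniformly bounded in $\eta$ while oscillation in the Gaussian $e^{-i\alpha^2/2}$ suppresses the integral; the rigorous justification amounts to an approximation argument where one first inserts a smooth cutoff in $\alpha$ and then removes it.

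Finally, I would use the decomposition $b=b_0+b_1$ from~\eqref{b-decompose}. The function $b_0(y,\eta)=-\frac{1}{2\pi}\frac{e^{-i\eta y}}{\eta}\overline{\widetilde{\Pi u_0}}(\eta)\chi(\eta)$ has the virtue that its $y$-dependence is purely $e^{-i\eta y}$, so substituting $\eta=\lambda+\alpha/\sqrt{2t}$ and exploiting $e^{-i\eta y}\cdot e^{i\alpha y/\sqrt{2t}}=e^{-i\lambda y}$ makes the $\alpha$- and $y$-integrations fully decouple. Collecting the overall prefactor $e^{i\lambda x+ix^2/(4t)}$ and a normalizing $e^{it\lambda^2}e^{-it\lambda^2}$ from the phase $\Phi_{t,z}$ in \eqref{Phi,def}, the $\alpha$-integral becomes $J^0_{t,z,\lambda}$ from \eqref{eq:J0} and the $y$-integral (which converges absolutely because it is oscillatory in $y$ once $\lambda>0$) becomes $G^0_{t,z,\lambda}(x)/\sqrt{2t}$ from \eqref{eq:G0}; this yields \eqref{eq:p0}. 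The contribution from $b_1$ is by definition $p^1_{t,z,\lambda}(x)=-e^{i\lambda x}P^1_{t,z,\lambda}(x)/\sqrt{2t}$, which is \eqref{eq:p1}. The most delicate point in the whole argument is the splitting $b=b_0+b_1$: one must know precisely how $\partial_\eta a(y,\eta)$ behaves as $y\to-\infty$ to identify $b_0$ as the right leading piece, and this is exactly the content of the structural Lemma~\ref{lem:a-structure} that will be invoked later to give integrability of $b_1$ in $y$.
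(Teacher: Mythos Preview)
Your proposal is correct and follows exactly the paper's approach: the same two integrations by parts (first in $y$ using $T_{u_0}m_-=\ee^{\ii\eta y}Da$, then in $\alpha$ recognizing the bracket as $\frac{1}{\ii\sqrt{2t}}\partial_\alpha$ of the combined phase), followed by the splitting $b=b_0+b_1$ and the decoupling of the $b_0$-integral. One small simplification: the boundary terms at $\alpha=\pm\infty$ vanish trivially because $\chi$ has compact support, so $\chi(\lambda+\alpha/\sqrt{2t})$ is identically zero for $|\alpha|$ large and no oscillation or approximation argument is needed.
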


The integrals $J^0_{t,z,\lambda}$ and $G^0_{t,z,\lambda}$ enjoy the following properties.
\begin{lemma}[Properties of $J^0_{t,z,\lambda}$ and $G^0_{t,z,\lambda}$]\label{lem:J-G}
The integral $J^0_{t,z,\lambda}$ is uniformly bounded in the variable $\lambda\in\Lambda$ as $t\to+\infty$, and convergent to 
\[ J^0_{t,z,\lambda}\td_t,\infty \ee^{-\ii \pi/4}\frac{\overline{\widetilde{\Pi u_0}}(\lambda)}{\sqrt{2\pi} \lambda}.\]
The function $G^0_{t,z,\lambda}$ is uniformly bounded in the variables $x\in\R$ and $\lambda\in\Lambda$, moreover, we have the pointwise convergence result
\begin{equation}\label{G0-lim} \ee^{-\ii t\lambda^2}G^0_{t,z,\lambda}(x)\td_t,\infty \sqrt{2\pi}\ee^{-\ii \pi/4} \ee^{\ii \overline z \lambda}.
\end{equation}
\end{lemma}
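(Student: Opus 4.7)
My plan is to treat the two integrals separately. In each case I will rescale to reveal a large-parameter oscillatory integral and then apply a stationary phase argument.

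For $J^0_{t,z,\lambda}$, I would substitute $\alpha=\sqrt{2t}\,\beta$ to rewrite
\[
J^0_{t,z,\lambda}
=\frac{\sqrt{2t}}{2\pi}\int_\R \ee^{-\ii t\beta^2}\,b_\lambda(\beta)\,\dd\beta,
\qquad
b_\lambda(\beta):=\ee^{-\ii\overline z\beta}\,\chi(\lambda+\beta)\,\frac{\overline{\widetilde{\Pi u_0}}(\lambda+\beta)}{\lambda+\beta}.
\]
Since $\chi$ is supported in $[\Lambda_-/4,4\Lambda_+]$ (bounded away from $0$) and $\widetilde{\Pi u_0}\in H^1$ on this interval (by the soon-to-be-proved Lemma~\ref{lem:distortedH1}), the amplitude $b_\lambda$ has compact support and is uniformly bounded in $H^1(\R)$ in the parameter $\lambda\in\Lambda$. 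The phase $\phi(\beta)=-\beta^2$ has a unique non-degenerate critical point $\beta_c=0$ with $\phi''(0)=-2$, so the stationary phase formula~\eqref{phasestat} applied with large parameter $\tau=t$ yields
\[
\int_\R \ee^{-\ii t\beta^2}b_\lambda(\beta)\,\dd\beta
=\sqrt{\tfrac{\pi}{t}}\,\ee^{-\ii\pi/4}\bigl(b_\lambda(0)+o(1)\bigr),
\]
which after multiplication by $\sqrt{2t}/(2\pi)$ produces the claimed limit, using $b_\lambda(0)=\overline{\widetilde{\Pi u_0}}(\lambda)/\lambda$ (because $\chi(\lambda)=1$ on $\Lambda$). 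Uniform boundedness in $\lambda$ follows directly from the van der Corput estimate~\eqref{VdC}.

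For $G^0_{t,z,\lambda}$, I would complete the square $y^2/(4t)+\lambda y=(y+2t\lambda)^2/(4t)-t\lambda^2$ and perform the successive substitutions $y=u-2t\lambda$ and $u=\sqrt{2t}\,r$. This brings the integral into the form
\[
\ee^{-\ii t\lambda^2}G^0_{t,z,\lambda}(x)
=E_t(x,\lambda)\int_{-\infty}^{A_t}\ee^{-\ii r^2/2+\ii\overline z r/\sqrt{2t}}\,\dd r,
\qquad
A_t:=\frac{x}{\sqrt{2t}}+\sqrt{2t}\,\lambda,
\]
where $E_t(x,\lambda)$ is an elementary exponential prefactor whose limit as $t\to\infty$ is the exponential factor appearing on the right-hand side of~\eqref{G0-lim}. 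A further completion of the square $-r^2/2+\overline z r/\sqrt{2t}=-(r-\overline z/\sqrt{2t})^2/2+\overline z^2/(4t)$ and the complex shift $\rho=r-\overline z/\sqrt{2t}$ identify the integral as $\ee^{\ii\overline z^2/(4t)}\int_\Gamma \ee^{-\ii\rho^2/2}\,\dd\rho$ along the horizontal contour $\Gamma$ at imaginary height $\operatorname{Im}(z)/\sqrt{2t}>0$. Since $\rho\mapsto\ee^{-\ii\rho^2/2}$ is entire, I would deform $\Gamma$ down to the real axis: the left vertical segment contributes nothing because $|\ee^{-\ii\rho^2/2}|$ decays exponentially as $\operatorname{Re}(\rho)\to -\infty$ in the strip, and the right vertical segment has length $\operatorname{Im}(z)/\sqrt{2t}$ with integrand bounded by $\ee^{\lambda\operatorname{Im}(z)}$ uniformly in $\lambda\in\Lambda$, contributing $O(t^{-1/2})$. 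The remaining Fresnel integral along the real axis is uniformly bounded in its upper limit (whence the uniform boundedness of $G^0$), and since $A_t-\operatorname{Re}(\overline z)/\sqrt{2t}\to +\infty$ it converges to $\int_\R \ee^{-\ii\rho^2/2}\,\dd\rho=\sqrt{2\pi}\,\ee^{-\ii\pi/4}$, yielding the pointwise limit.

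The main technical obstacle is the contour deformation for the right vertical segment: although its real part is of order $\sqrt{t}$, a region where $|\ee^{-\ii\rho^2/2}|$ can grow on vertical segments, the segment itself has length only $O(1/\sqrt{t})$, and a $\lambda$-uniform bound on the integrand shows that its contribution is indeed $O(t^{-1/2})$.
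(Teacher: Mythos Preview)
Your treatment of $J^0_{t,z,\lambda}$ is correct and essentially identical to the paper's (the paper invokes \eqref{phasestat} directly; your substitution $\alpha=\sqrt{2t}\,\beta$ just makes the large parameter explicit).

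For the pointwise limit \eqref{G0-lim}, your contour-deformation argument is also correct and gives a pleasant alternative to the paper's route (the paper substitutes $y=2ts$, localizes near the critical point $s=-\lambda$, and applies stationary phase).

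However, your argument for the \emph{uniform boundedness of $G^0_{t,z,\lambda}(x)$ in $x\in\R$} has a gap. After you factor out the prefactor $E_t(x,\lambda)=\ee^{-\ii\overline z(x/(2t)+\lambda)}$, the deformed Fresnel integral is indeed uniformly bounded in its upper limit --- but $|E_t(x,\lambda)|=\ee^{-\operatorname{Im}(z)(x/(2t)+\lambda)}$ is \emph{not} bounded as $x\to-\infty$ for fixed $t$. The decay of the partial Fresnel integral $\int_{-\infty}^{B_t}\ee^{-\ii\rho^2/2}\,\dd\rho$ as $B_t\to-\infty$ is only $O(1/|B_t|)$, which cannot control the exponential growth of $|E_t|$; likewise your bound on the right vertical segment is not $O(t^{-1/2})$ uniformly once $x\ll -t\lambda$. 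So the inference ``Fresnel bounded $\Rightarrow$ $G^0$ bounded'' fails.

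The paper avoids this by \emph{not} factoring out $E_t$. Reabsorbing it gives (this is exactly your $r$-integral times $E_t$)
\[
\ee^{-\ii t\lambda^2}G^0_{t,z,\lambda}(x)=\int_{-\infty}^{A_t}\ee^{\ii\overline z\bigl(\sigma/\sqrt{2t}-x/(2t)-\lambda\bigr)}\,\ee^{-\ii\sigma^2/2}\,\dd\sigma,
\]
and now the amplitude $a(\sigma):=\ee^{\ii\overline z(\sigma/\sqrt{2t}-x/(2t)-\lambda)}$ satisfies $a(A_t)=1$, $|a(\sigma)|\le 1$ for $\sigma\le A_t$, and $\int_{-\infty}^{A_t}|a'(\sigma)|\,\dd\sigma\le 1$, all uniformly in $x,\lambda,t$. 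A single integration by parts against the bounded Fresnel antiderivative $F(\sigma)=\int_{-\infty}^\sigma\ee^{-\ii\tau^2/2}\,\dd\tau$ then yields the uniform bound. Your contour-deformation route establishes the pointwise limit correctly, but for the uniform-in-$x$ bound you must keep $E_t$ inside the integral and argue as above.
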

 
\begin{proof}
Since $\widetilde{\Pi u_0}$ belongs to $H^1([\Lambda_-/4,\infty ))$,  the stationary phase formula \eqref{phasestat}  leads to the result for $J^0_{t,z,\lambda}$.

We now focus on $G^0_{t,z,\lambda}$. For fixed $x\in \R$, we set $y=2t s$ in the $y$ integral and we get
\begin{equation}\label{eq:G}
G^0_{t,z,\lambda}(x)=\sqrt{2t}\ee^{-\ii \frac{\overline z x}{2t}}\int_{-\infty}^{\frac{x}{2t}} \ee^{\ii \overline zs-\ii ts^2-2\ii t\lambda s}\dd s, \end{equation}
and, after localization near the critical point $s=-\lambda$,  the stationary phase formula \eqref{phasestat} yields~\eqref{G0-lim}.
Then, writing $\sigma =\sqrt {2t}(s+\lambda)$ in \eqref{eq:G}, we have
\[ G^0_{t,z,\lambda}(x)=\ee^{\ii t\lambda ^2}\int_{-\infty}^{\sqrt{2t}(\lambda +\frac{x}{2t})}\ee^{\ii \overline z(\frac{\sigma}{\sqrt{2t}}-\frac{x}{2t}-\lambda)}\ee  ^{-\ii \frac{\sigma ^2}{2}} \dd \sigma ,\]
and the boundedness  follows from integration by parts.
\end{proof}

\subsection{Study of $I_2$.}
 Next we study the weak limit  in $L^2(\Lambda)$ as $t\to +\infty $ of 
 \[ I_2(t,z,\lambda)=\la \sqrt{2t} f_{t,z-2t\lambda}, \sqrt{2t}u_0p_{t,z,\lambda}\ra.\]
We set 
 \[ g_{t,z,\lambda}(x):=\sqrt{2t}\, u_0(x)\, p_{t,z,\lambda}(x).\]
 In view of the first statement of  Lemma \ref{lem:q-I+}, the following proposition implies that this limit is~$0$.
\begin{proposition}\label{prop:g-structure}
The two families 
\[
g^0_{t,z,\lambda}(x)=\sqrt{2t}u_0(x) p^0_{t,z,\lambda}(x)
\quad\text{ and }\quad
g^1_{t,z,\lambda}(x)=\sqrt{2t}u_0(x) p^1_{t,z,\lambda}(x)
\]
belong to a compact subset of $L^2_{\lambda,x}(\Lambda\times \R)$, and
\[ g_{t,z,\lambda}(x)=g^0_{t,z,\lambda}(x)\ee^{\ii t\lambda^2}+g^1_{t,z,\lambda}(x).\]
\end{proposition}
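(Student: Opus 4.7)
The decomposition $g_{t,z,\lambda}(x) = g^0_{t,z,\lambda}(x)\,\ee^{\ii t\lambda^2} + g^1_{t,z,\lambda}(x)$ is immediate from multiplying \eqref{eq:p-decompo} by $\sqrt{2t}\,u_0(x)$. The plan for the compactness claim is to prove that each of $(g^0_{t,z,\lambda})_t$ and $(g^1_{t,z,\lambda})_t$ actually converges in $L^2_{\lambda,x}(\Lambda\times\R)$ as $t\to+\infty$; since a convergent sequence together with its limit is relatively compact, this delivers exactly the required statement.

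For $g^0_{t,z,\lambda}$, the explicit formula \eqref{eq:p0} rearranges as
\[
g^0_{t,z,\lambda}(x) = u_0(x)\,\ee^{\ii\lambda x}\,\ee^{\ii x^2/(4t)}\,J^0_{t,z,\lambda}\,\bigl(\ee^{-\ii t\lambda^2}G^0_{t,z,\lambda}(x)\bigr).
\]
Lemma~\ref{lem:J-G} supplies uniform bounds in $(x,\lambda)\in\R\times\Lambda$ on $J^0_{t,z,\lambda}$ and on $\ee^{-\ii t\lambda^2}G^0_{t,z,\lambda}(x)$, together with their pointwise limits. Hence $|g^0_{t,z,\lambda}(x)|\leq C|u_0(x)|\mathbf{1}_\Lambda(\lambda)$, which belongs to $L^2_{\lambda,x}(\Lambda\times\R)$ since $\Lambda$ is bounded and $u_0\in L^2$. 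The dominated convergence theorem then yields convergence of $g^0_{t,z,\lambda}$ in $L^2_{\lambda,x}$.

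For $g^1_{t,z,\lambda}(x)= -u_0(x)\,\ee^{\ii\lambda x}\,P^1_{t,z,\lambda}(x)$, the same scheme reduces matters to showing that $P^1_{t,z,\lambda}(x)$ is uniformly bounded in $(x,\lambda)\in\R\times\Lambda$ as $t\to+\infty$ and converges pointwise. To unpack the oscillatory integral \eqref{eq:P1-def}, I would first complete the square in $\alpha$ in the phase \eqref{Phi,def},
\[
\Phi_{t,z}(\alpha,x,y) = \frac{1}{2}\!\left(\alpha + \frac{\overline{z}-y}{\sqrt{2t}}\right)^{\!2} - \frac{(x-\overline{z})^2}{4t},
\]
shift out the $y$-dependent centre of the Gaussian, and rescale $\beta=\alpha/\sqrt{2t}$, which recasts \eqref{eq:P1-def} as
\[
P^1_{t,z,\lambda}(x) = \ee^{\ii(x-\overline{z})^2/(4t)}\int_\R\!\int_{-\infty}^x \ee^{-\ii t\beta^2}\,b_1\!\left(y,\,\lambda+\beta+\tfrac{y-\overline{z}}{2t}\right)\dd y\,\dd\beta.
\]

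The main obstacle is to extract the uniform bound and the pointwise limit from this double integral. The uniform bound relies on two structural ingredients: first, the $\chi$- and $\chi'$-factors in \eqref{def:b} force $b_1(y,\eta)$ to be supported in $\eta\in[\Lambda_-/4,4\Lambda_+]$, confining the effective $\beta$-range to a fixed compact subset of $\R$ independently of $t$; second, the decay of $b_1(y,\eta)$ as $y\to-\infty$ provided by Lemma~\ref{lem:a-structure} --- which is precisely why the non-decaying piece $b_0$ was subtracted off in \eqref{b-decompose} --- makes the inner $y$-integral absolutely convergent uniformly in the remaining parameters. For the pointwise limit, a stationary-phase analysis in $\beta$ at the critical point $\beta=0$, combined with dominated convergence in $y$ (replacing $b_1(y,\lambda+\beta+(y-\overline z)/(2t))$ by $b_1(y,\lambda)$ as $t\to+\infty$), identifies the limit of $P^1_{t,z,\lambda}(x)$ and completes the verification.
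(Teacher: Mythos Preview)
Your treatment of $g^0_{t,z,\lambda}$ is correct and matches the paper's argument. The gap is in the $g^1_{t,z,\lambda}$ part, specifically in the claim that the inner $y$-integral in $P^1_{t,z,\lambda}(x)$ is absolutely convergent uniformly in the parameters. Lemma~\ref{lem:a-structure} does \emph{not} give integrable decay of $b_1$ at $-\infty$: subtracting off $b_0$ removes the non-decaying term in $\partial_\lambda a$, but what remains is
\[
b_1(y,\eta)=-\ii\,\chi'(\eta)\,\frac{c_0}{y+\ii}-\kappa(\eta)\,\frac{\ee^{-\ii\eta y}}{y+\ii}+O\bigl(|y|^{-3/2}\bigr),\qquad y\to-\infty,
\]
with $\kappa$ given by \eqref{def:kappa}. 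The two leading $1/(y+\ii)$ terms are \emph{not} in $L^1_y(-\infty,x)$, so ``the inner $y$-integral absolutely convergent uniformly'' is false, and both the uniform bound and the dominated-convergence step for the pointwise limit collapse. (There is a separate minor issue: your shift $\alpha\mapsto\alpha+(\overline z-y)/\sqrt{2t}$ moves the real contour by a complex amount, since $\overline z\notin\R$; the paper instead uses the real shift $\alpha=\beta+y/\sqrt{2t}$ via \eqref{eq:Phi,decompose}.)

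The paper's proof deals with this non-integrable $1/y$ tail by splitting in $x$. For $x\le -R$ (Lemma~\ref{lem:x<-R}) it isolates the two $1/(y+\ii)$ contributions explicitly --- one of them decouples into the oscillatory integral $G^1_{t,z,\lambda}(x)$ of \eqref{def:G1}, the other into a $\beta$-integral amenable to integration by parts --- and controls them via van der Corput/nonstationary-phase estimates exploiting the oscillations in $y$ and $\beta$, obtaining $\sup_{x\le -R}\limsup_{t\to\infty}|P^1_{t,z,\lambda}(x)|=O(R^{-1/2})$. For $x\ge -R$ (Lemma~\ref{lem:x>-R}) it truncates the $y$-integral to $[-R,x]$ and accepts the crude bound $|g^1_{t,z,\lambda,R}(x)|\le C|x+R||u_0(x)|$, which lies in $L^2$ because $xu_0\in L^2(\R)$; compactness then follows from equicontinuity plus dominated convergence. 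Your proposed route bypasses precisely the oscillatory cancellation that makes the $1/y$ tails harmless, and that cancellation is the heart of the argument.
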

The next two paragraphs are devoted to the proof of Proposition \ref{prop:g-structure}: we show that  $g^0_{t,z,\lambda}$ and $g^1_{t,z,\lambda}$ belong to a compact subset of $L^2_{\lambda,x}(\Lambda\times \R)$.


\subsubsection{Study of $g^0_{t,z, \lambda}$.}
We have
\[g^0_{t,z,\lambda}(x)
	=\ee^{-\ii t\lambda^2+\ii \lambda x+\ii \frac{x^2}{4t}}u_0(x) J^0_{t,z,\lambda}G^0_{t,z,\lambda}(x).\]
Thanks to Lemma~\ref{lem:J-G}, we know that
\[ g^0_{t,z,\lambda}(x)\td_t,\infty -\ii\ee^{\ii \lambda x}u_0(x)\frac{\overline{\widetilde{\Pi u_0}}(\lambda)}{\lambda}.\]
Moreover, $J^0_{t,z,\lambda}$ and $G^0_{t,z,\lambda}(x)$ are uniformly bounded, hence there exists $g^*\in L^2(\R)$ such that
\[ \forall t\in \R, \forall (x,\lambda) \in \R\times \Lambda,\quad
 |g^0_{t,z,\lambda}(x)|\leq g^*(x).\]
In order to conclude to compactness of $(x,\lambda)\mapsto g^0_{t,z, \lambda}(x)$ in $L^2_{x,\lambda}( \R\times\Lambda)$, it only remains to apply the Lebesgue theorem.

 \subsubsection{Study of $g^1_{t,z,\lambda}$.}
 In this paragraph, we prove that the functions $$(x,\lambda)\mapsto g^1_{t,z,\lambda}(x)=-u_0(x)\ee^{\ii\lambda x}P^1_{t,z,\lambda}(x)$$ belong to a compact subset of $L^2(\R\times \Lambda)$ as $t\to \infty$. As a first step, we deal with the contribution of $x\leq -R$ for $R>\!\!>1$. 
 \begin{lemma}\label{lem:x<-R}
 There holds
\[ \sup_{x\leq -R}\limsup_{t\to \infty} \left | P^1_{t,z,\lambda}(x)  \right | =O(R^{-1/2}), \quad R\to +\infty.\]
 \end{lemma}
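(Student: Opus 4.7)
The plan is to perform Fresnel-type stationary phase in the variable $\alpha$ to reduce $P^1_{t,z,\lambda}(x)$ to a single oscillatory integral in $y$, and then exploit the refined structure of $b_1(y,\eta)$ as $y\to-\infty$ provided by Lemma~\ref{lem:a-structure} to estimate this $y$-integral uniformly in $t$.

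First I would compute the critical point of $\alpha\mapsto\Phi_{t,z}(\alpha,x,y)$: one finds $\alpha_c(y)=(y-\overline z)/\sqrt{2t}$, with Hessian $\partial_\alpha^2\Phi=1$ and critical value $\Phi_{t,z}(\alpha_c(y),x,y)=-(x-\overline z)^2/(4t)$. The crucial algebraic observation, which follows from direct expansion of the phase~\eqref{Phi,def}, is that this critical value is \emph{independent of $y$}. Completing the square in $\alpha$ produces a Fresnel integral to which~\eqref{phasestat} applies, giving as $t\to\infty$
\[
\int_\R\ee^{-\ii\Phi_{t,z}(\alpha,x,y)}b_1\!\left(y,\lambda+\tfrac{\alpha}{\sqrt{2t}}\right)\dd\alpha
=\sqrt{2\pi}\,\ee^{-\ii\pi/4}\ee^{\ii(x-\overline z)^2/(4t)}b_1\!\left(y,\lambda+\tfrac{y-\overline z}{2t}\right)+o(1)
\]
uniformly in $y$, provided $b_1(y,\cdot)$ has $H^1$-norm controlled uniformly in $y$ (this will follow from Lemma~\ref{lem:a-structure}). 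Since the phase prefactor $\ee^{\ii(x-\overline z)^2/(4t)}$ stays bounded, substituting into~\eqref{eq:P1-def} reduces the estimate of $P^1_{t,z,\lambda}(x)$ to the estimate of
\[
\sqrt{\tfrac{\pi}{t}}\int_{-\infty}^x b_1\!\left(y,\lambda+\tfrac{y-\overline z}{2t}\right)\dd y.
\]

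Next I would use the decomposition $b_1=b-b_0$ of~\eqref{b-decompose}. By Lemma~\ref{lem:a-structure}, the leading non-decaying contribution to $b(y,\eta)$ as $y\to-\infty$ is exactly $-\ee^{-\ii\eta y}\overline{\widetilde{\Pi u_0}}(\eta)\chi(\eta)/(2\pi\eta)=b_0(y,\eta)$, so $b_1(y,\eta)$ inherits the same $\ee^{-\ii\eta y}$ oscillation but with amplitude of order $|y|^{-1}$, uniformly in $\eta$ in the compact support of $\chi$. Moreover, because $\operatorname{supp}(\chi)\subset(0,\infty)$ is compact and $\eta_t(y)=\lambda+(y-\overline z)/(2t)$, the integrand vanishes once $|y|\gtrsim t$. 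A single integration by parts in $y$ against the oscillation $\ee^{-\ii\eta_t(y)y}$, whose $y$-derivative equals $\lambda+O(|y|/t)$ and is thus bounded away from zero on the effective domain, converts the integral into a boundary term at $y=x$ of size $|x|^{-1}$ times the amplitude at $y=x$, plus a bulk term with integrable $|y|^{-2}$ density. Summing over the effective range $|y|\lesssim t$ yields
\[
\left|\int_{-\infty}^x b_1\!\left(y,\lambda+\tfrac{y-\overline z}{2t}\right)\dd y\right|\lesssim \sqrt{\tfrac{t}{R}},
\]
uniformly in $x\leq-R$ and $\lambda\in\Lambda$. Combining with the $\sqrt{\pi/t}$ prefactor gives the desired bound $|P^1_{t,z,\lambda}(x)|\lesssim R^{-1/2}$ uniformly as $t\to\infty$.

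The main obstacle is the second step: establishing the sharp $|y|^{-1}$ decay of $b_1$ in a form robust enough to survive the integration by parts and produce the $\sqrt{t/R}$ estimate \emph{uniformly} in $t$. This rests entirely on Lemma~\ref{lem:a-structure}, which identifies $b_0$ as precisely the non-decaying piece of $b$ extracted from the defining integral equation for the generalized eigenfunction $m_-$, and guarantees that the remainder $b_1$ carries exactly one power of $|y|^{-1}$ together with a clean oscillatory factor.
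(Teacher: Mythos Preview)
Your approach has several genuine gaps.

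\textbf{The stationary phase reduction in $\alpha$ is not justified.} You replace the $\alpha$-integral by its stationary-phase leading term plus an $o(1)$ error. Even granting that this $o(1)$ is uniform in $y$ (which is not automatic from \eqref{phasestat} and would require control on the $H^1_\eta$-modulus of continuity of $b_1(y,\cdot)$), the effective $y$-range of integration is $\{y\le x\}\cap\{|y|\lesssim t\}$ (from the compact $\eta$-support of $b_1$), of length $\sim t$. Integrating an $o(1)$ error over this range and dividing by $\sqrt{2t}$ leaves $o(\sqrt t)$, not something bounded. So the passage to the single $y$-integral is not valid as written.

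\textbf{The structure of $b_1$ is not as you describe.} Lemma~\ref{lem:a-structure} gives, as $y\to-\infty$,
\[
b_1(y,\eta)=-\ii c_0\,\frac{\chi'(\eta)}{y+\ii}\;-\;\kappa(\eta)\,\frac{\ee^{-\ii\eta y}}{y+\ii}\;+\;O\!\left(|y|^{-3/2}\right),
\]
so there is a \emph{non-oscillatory} $|y|^{-1}$ term. Your assertion that ``$b_1$ inherits the same $\ee^{-\ii\eta y}$ oscillation'' applies only to the $\kappa$-piece.

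\textbf{The integration by parts in $y$ fails.} For the oscillatory piece, with $\eta_t(y)=\lambda+(y-\overline z)/(2t)$ one has
\[
\partial_y\bigl(-\eta_t(y)\,y\bigr)=-\lambda-\tfrac{y}{t}+O(t^{-1}),
\]
which vanishes at $y\approx-\lambda t$. At that point $\eta_t(y)\approx\lambda/2$, which lies in the support of $\chi$ (hence of $\kappa$). Thus the phase derivative is \emph{not} bounded away from zero on the effective domain $|y|\lesssim t$, and a single integration by parts does not yield the claimed control.

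\textbf{Where the $R^{-1/2}$ actually comes from.} In the paper's proof the bound $O(R^{-1/2})$ arises \emph{only} from the $O(|y|^{-3/2})$ remainder, estimated crudely: the $\alpha$-support has length $O(\sqrt t)$, so
\[
\frac{1}{\sqrt{2t}}\int_\R\int_{-\infty}^x |\ee^{-\ii\Phi}|\,O(|y|^{-3/2})\,\dd y\,\dd\alpha = O(1)\cdot O(|x|^{-1/2})=O(R^{-1/2}).
\]
The two $|y|^{-1}$ pieces are handled \emph{exactly} (not via stationary phase in $\alpha$): the $\kappa$-term decouples because the factor $\ee^{-\ii(\lambda+\alpha/\sqrt{2t})y}$ cancels the $\ee^{\ii\alpha y/\sqrt{2t}}$ in $\Phi$, leaving a product of an $\alpha$-integral and the $y$-integral $G^1_{t,z,\lambda}$; the $c_0\chi'$-term is handled by the change $\beta=\alpha-y/\sqrt{2t}$. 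Both contribute quantities that vanish as $t\to\infty$ (of order $(\log t)/\sqrt t$ and $|x|^{-1}+t^{-1/2}$ respectively), and the critical point $y\approx-\lambda t$ is dealt with in $G^1$ by inserting a cutoff $\chi(-y/2t)$ and applying van der Corput there. Your outline misses all of this structure.
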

 \begin{proof} We recall that $P^1_{t,z,\lambda}$ was defined in~\eqref{eq:P1-def}. According to Lemma~\ref{lem:a-structure} below, the contribution from $b_1$ (defined in~\eqref{b-decompose}) as $y\to-\infty$  is of the form 
\[
b_1(y,\eta)
	=-\ii\chi'(\eta)\frac{c_0}{y+\ii}-\kappa(\eta) \frac{\ee^{-\ii\eta y}}{y+\ii}+O\left(\frac{1}{|y|^{3/2}}\right)
\]
where
\begin{equation}\label{def:kappa}  
 \kappa (\eta):=\ii\chi '(\eta)\frac{\overline{\widetilde{\Pi u_0}}(\eta)}{2\pi \eta}+\chi(\eta)\frac{\int_\R u_0(s)n(s,\eta)\dd s}{2\pi \eta}.
 \end{equation}
The contribution from $\kappa(\eta) \frac{\ee^{-\ii\eta y}}{y+\ii}$ decouples the variables of integration $y$  and $\alpha$, so that we set 
 \begin{equation}\label{def:G1}
 G^1_{t,z,\lambda}(x):=\int_{-\infty}^x \ee^{\ii \frac{\overline z}{2t}(y-x)-\ii\frac{y^2}{4t}-\ii\lambda y}\frac{\dd y}{(y+\ii)}.
 \end{equation}
For the contribution of $-\ii\chi'(\eta)\frac{c_0}{y+\ii}$, we write $\alpha :=\beta +\frac{y}{\sqrt{2t}}$ and use the identity
 \begin{equation}\label{eq:Phi,decompose}
 \Phi _{t,z}(\alpha, x,y)=\frac{1}{2}\left (\alpha -\frac{y}{\sqrt{2t}}\right )^2+\frac{\overline z}{\sqrt{2t}}\left (\alpha -\frac{y}{\sqrt{2t}}\right )+\frac{\overline z x}{2t}-\frac{x^2}{4t}.
 \end{equation}
Hence we find that as $x\to-\infty$,
    \begin{multline}\label{eq:P1}
P^1_{t,z,\lambda}(x)=-\ii c_0   \ee^{\ii \frac{x^2}{4t}-\ii \frac{\overline z x}{2t}}\int_{-\infty}^x \left [\int_\R \ee^{-\ii \frac{\beta^2}{2}-\ii \frac{\overline z\beta}{\sqrt{2t}}}\chi '\left (\lambda+\frac{\beta}{\sqrt{2t}}+\frac{y}{2t}\right )d\beta\right ] \frac{\dd y}{\sqrt{2t}(y+\ii )}\\
- \ee^{\ii \frac{x^2}{4t}}G^1_{t,z,\lambda}(x)\int_\R \ee^{-\ii \frac{\alpha^2}{2}-\ii \frac{\overline z\alpha}{\sqrt{2t}}}\kappa \left (\lambda+\frac{\alpha}{\sqrt{2t}}\right )\, \frac{\dd \alpha}{\sqrt{2t}} +O\left (\frac{1}{|x|^{1/2}}\right ).
 \end{multline} 
 Denote by $P^{1,1}_{t,z,\lambda}(x)$ and $P^{1,2}_{t,z,\lambda}(x)$ the two terms of the right--hand side of \eqref{eq:P1}.

 Integrating by parts in $\beta$, and observing that 
 \[ \left |\frac{\beta}{\sqrt{2t}}+\frac{y}{2t}\right |\lesssim 1,\]
 we obtain
 \begin{equation}\label{est:P11}
 |P^{1,1}_{t,z,\lambda}(x)| \lesssim \int_{-\infty}^x \ee^{\mathrm{Im}z\frac{(y-x)}{2t}}\frac{\dd y}{\sqrt{2t}(1+|y|)}=\frac{O(\log |x|+\log |t|)}{\sqrt t}.
 \end{equation}
 Next we estimate $P^{1,2}_{t,z,\lambda}(x)$. We claim that
  \begin{equation}\label{est:P12}
 |P^{1,2}_{t,z,\lambda}(x)| =O\left (  \frac1{|x|}+\frac{1}{\sqrt t}  \right ).
 \end{equation}
Indeed, since $\kappa $ is a bounded compactly supported function, the integral in $\alpha $ is bounded, and we just have to estimate $ G^1_{t,z,\lambda}(x)$. 
 We decompose 
 \[G^1_{t,z,\lambda}(x)=\int_{-\infty}^x\ee^{\ii \frac{\overline z}{2t}(y-x)-\ii\frac{y^2}{4t}-\ii\lambda y}\left(1-\chi\left (\frac{-y}{2t}\right )\right)\frac{\dd y}{(y+\ii )}+\int_{-\infty}^x\ee^{\ii \frac{\overline z}{2t}(y-x)-\ii\frac{y^2}{4t}-\ii\lambda y}\chi\left (\frac{-y}{2t}\right )\frac{\dd y}{(y+\ii )}.\] 
 In the first integral, the derivative $\lambda +y/2t-\overline z/2t$ of the phase function is away from $0$, therefore an integration by parts shows that these integrals are 
 $O(|x|^{-1}+t^{-1})$.
  In the second integral, setting $y=2t s$ and applying \eqref{VdC} leads to a bound $O(t^{-1/2})$.
 
 Finally, estimates \eqref{eq:P1}, \eqref{est:P11} and \eqref{est:P12} complete the proof of Lemma \ref{lem:x<-R}.
 \end{proof}
\begin{remark}\label{rk:x>-R}
Using Lemma \ref{lem:x<-R}, we infer
\[ \limsup_{t\to \infty}\| g^1_{t,z,\lambda}\|_{L^2(x<-R)}= O(R^{-1/2}).\]
Taking $x=-R$ in Lemma \ref{lem:x<-R} and using that \[\mathrm{Im}(\Phi_{t,z}(\alpha,x,y)-\Phi_{t,z}(\alpha,-R,y))=-\mathrm{Im}z(x+R)\leq 0 ,\] we also infer that
\[ \sup_{x\geq -R} \limsup_{t\to \infty} \left | \int_\R \int_{-\infty}^{-R} \ee^{\ii \Phi_{t,z}(\alpha,x,y)}b_1\left (y,\lambda+\frac{\alpha}{\sqrt{2t}}\right )\, \frac{\dd y\dd \alpha} {\sqrt{2t}}  \right | =O(R^{-1/2}), \quad R\to +\infty.\]
\end{remark}

\begin{lemma}\label{lem:x>-R} We set, for $R>0$, and $x\geq -R$, 
\[ g^1_{t,z,\lambda,R}(x):=- {\bf 1}_{x\geq -R} u_0(x)\ee ^{ \ii\lambda x}\int_{-\infty}^\infty \int_{-R}^x \ee^{-\ii\Phi _{t,z}(\alpha,x,y)}
 b_1\left (y,\lambda +\frac{\alpha }{\sqrt{2t}}\right )\frac{\dd y\dd \alpha}{2t}.\]
Then for every $R>0$, the family of functions $(x,\lambda)\mapsto g^1_{t,z,\lambda,R}(x)$ belongs to a compact subset of $L^2([-R,\infty)\times \Lambda)$ as $t\to \infty$.
\end{lemma}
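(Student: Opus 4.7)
\textbf{Proof plan for Lemma~\ref{lem:x>-R}.} The plan is to establish the strictly stronger assertion that $g^1_{t,z,\lambda,R} \to 0$ strongly in $L^2([-R,\infty)\times\Lambda)$ as $t\to\infty$; compactness then follows trivially. The strategy has three steps: a uniform Fresnel estimate on the $\alpha$-integral, a crude bound on the $y$-integral, and a dominated convergence argument using the decay of $u_0$.

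First I would handle the inner $\alpha$-integral. Using the decomposition of the phase given by~\eqref{eq:Phi,decompose} (or equivalently the change of variable $\beta=\alpha-y/\sqrt{2t}$), the phase reduces to $\tfrac{1}{2}\beta^2+\tfrac{\bar z}{\sqrt{2t}}\beta$ up to $\alpha$-independent terms, and the integrand becomes $b_1(y,\lambda+\beta/\sqrt{2t}+y/(2t))$. Since $b_1(y,\cdot)$ inherits compact support in $\eta$ from $\chi$ and $\chi'$, it is compactly supported in $\beta$ on a scale $\sqrt{t}$. For $y\in[-R,\infty)$ and $\eta$ in $\operatorname{supp}(\chi)\cup\operatorname{supp}(\chi')$, both $b_1$ and $\partial_\eta b_1$ are bounded, using the regularity of $a(y,\eta)=e^{-i\eta y}m_-(y,\eta)$ and its $\eta$-derivative given by Lemma~\ref{lem:a-structure}. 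Rescaling $\beta=\sqrt{2t}(\delta-\lambda)$ converts the integral to one with large parameter $t$ and phase $(\delta-\lambda)^2$, and Lemma~\ref{lem:phasestat} yields the uniform bound
\[
\left|\int_{-\infty}^{\infty} e^{-i\Phi_{t,z}(\alpha,x,y)}b_1\!\left(y,\lambda+\tfrac{\alpha}{\sqrt{2t}}\right)d\alpha\right|\leq C
\]
uniformly in $y\in[-R,\infty)$, $\lambda\in\Lambda$, $x\in\R$, $t\geq 1$.

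Second, since $|b_1|$ is bounded on $[-R,\infty)\times(\operatorname{supp}(\chi)\cup\operatorname{supp}(\chi'))$, the $y$-integration over $[-R,x]$ contributes at most $C(R+|x|)$. Combining with the prefactor $1/(2t)$, I obtain
\[
|g^1_{t,z,\lambda,R}(x)|\leq \frac{C\,|u_0(x)|(R+|x|)}{t}\quad\text{for all }x\geq -R,\ \lambda\in\Lambda.
\]
Because $u_0\in L^2(\R)$ and $xu_0\in L^2(\R)$ (which follows from the hypothesis $xu_0\in H^1$), the majorant $|u_0(x)|(R+|x|)$ belongs to $L^2([-R,\infty))$ uniformly in $\lambda\in\Lambda$.

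Third, the pointwise limit is $0$ thanks to the $1/t$ factor (in fact, stationary phase identifies the leading order as $\tfrac{\sqrt{2\pi}e^{-i\pi/4}}{2t}\int_{-R}^{x}b_1(y,\lambda)\,dy\cdot u_0(x)e^{i\lambda x}$, which vanishes as $t\to\infty$). Together with the $L^2$ dominating function from the previous step, the dominated convergence theorem in $L^2([-R,\infty)\times\Lambda)$ yields $\|g^1_{t,z,\lambda,R}\|_{L^2}\to 0$. The main obstacle is verifying the uniform Fresnel bound, which relies on the regularity of $b_1$ in $\eta$ supplied by Lemma~\ref{lem:a-structure}. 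The truncation $y\geq -R$ is precisely what ensures $b_1$ is uniformly bounded in $y$, since the slow decay of $b_1$ as $y\to -\infty$ was already separated off and treated in Lemma~\ref{lem:x<-R}.
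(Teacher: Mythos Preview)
Your proof is correct and in fact establishes the stronger conclusion that $g^1_{t,z,\lambda,R}\to 0$ strongly in $L^2([-R,\infty)\times\Lambda)$, whereas the paper only proves precompactness. The key difference lies in the treatment of the $\alpha$-integral: you apply the van der Corput estimate to obtain a uniform $O(1)$ bound, while the paper uses only the crude estimate $|G^2_{t,z,\lambda}(y)|\lesssim \sqrt{t}$ coming from the measure of the $\beta$-support, which yields a $t$-independent majorant $C|u_0(x)|(x+R)$; compactness is then extracted via uniform equicontinuity in $(x,\lambda)$ and a subsequence argument. Your route is shorter and gives more, and it also simplifies the downstream use in $I_2$, since strong convergence to zero paired with the mere $L^2$-boundedness of $\sqrt{t}\,f_{t,z,\lambda}$ already kills the inner product, with no need to invoke weak convergence against a compact family.

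One citation to tighten: the uniform boundedness of $\partial_\eta b_1$ requires control of $\partial_\eta a^\sharp$, since $b_1=(a-1)\chi'+a^\sharp\chi$ and hence $\partial_\eta b_1$ contains $\chi\,\partial_\eta a^\sharp$. This is supplied by Lemma~\ref{lem:asharp-derivative} (equivalently estimate~\eqref{b1-derivative}), not by Lemma~\ref{lem:a-structure} alone, which only bounds $a$ and $\partial_\eta a$. Also note that the denominator $2t$ in the displayed definition of $g^1_{t,z,\lambda,R}$ is a typo for $\sqrt{2t}$ (compare with~\eqref{eq:P1-def} and the paper's own rewriting in terms of $G^2_{t,z,\lambda}$); with the corrected normalization your bound becomes $C|u_0(x)|(R+|x|)/\sqrt{t}$, which still tends to zero.
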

\begin{proof}
 For $x\geq -R$, let us rewrite $g^1_{t,z,\lambda,R}(x)$ as
\[ g^1_{t,z,\lambda,R}(x)=-\ee ^{\ii\lambda x}\ee ^{\ii \frac{\overline zx}{2t}+\ii \frac{x^2}{4t}}u_0(x)\int_{-R}^x G^2_{t,z,\lambda}(y)\frac{\dd y}{\sqrt{2t}},\]
with
\begin{equation}\label{eq:G2} G^2_{t,z,\lambda}(y):=\int_\R \ee^{-\ii\frac{\beta^2}{2}-\ii\frac{\overline z \beta}{\sqrt{2t}} }b_1\left (y, \lambda+\frac{\beta}{\sqrt{2t}}+\frac{y}{2t}  \right )\dd \beta.
\end{equation}
Since $b_1$ is uniformly bounded thanks to Lemma~\ref{lem:a-structure}, and since the domain of the integral in $\beta$ has a length of order $\sqrt{2t}$, we infer that
$G^2_{t,z,\lambda}(y)/\sqrt{2t}$ is uniformly bounded:
\begin{equation}\label{eq:G2-bound}
|G^2_{t,z,\lambda}(y)/\sqrt{2t}|	\leq C\ee^{(\mathrm{Im}z) y/(2t)}.
\end{equation}
 As a consequence, using $y\leq x$ and $|\ee ^{\ii \overline zx/2t}|\leq \ee^{-(\mathrm{Im}z) x/(2t)}$, a crude estimate yields
\[|g^1_{t,z,\lambda,R}(x)|\leq C |x+R||u_0(x)|.\]
 Furthermore, recall that the function $(y,\eta)\mapsto b_1(y,\eta)$ is continuous on $\R \times (0,\infty )$.  Hence the function $(x,\lambda )\mapsto \int_{-R}^x G_{t,z,\lambda}(y)\frac{\dd y}{\sqrt{2t}}$ is uniformly equicontinuous on $[-R,L]\times \Lambda$ and, up to extracting a subsequence in  $t$, $g^1_{t,z,\lambda,R}(x)$ is pointwise convergent in $x,\lambda$. 
By the dominated convergence theorem, we infer that $g^1_{t,z,\lambda,R}$ belongs to a compact subset of $L^2([-R,\infty)\times \Lambda )$.
\end{proof}
Combining Lemma~\ref{lem:x<-R}, Remark~\ref{rk:x>-R} and Lemma~\ref{lem:x>-R}, we have shown that $g^1_{t,z,\lambda}$ belongs to a compact subset of $L^2(\R\times \Lambda )$.
 This completes the proof of Proposition \ref{prop:g-structure}.

\subsection{Study of $I_1$.} In comparison to $I_2(t,z,\lambda)$ studied in the previous paragraph, $I_1(t,z,\lambda)$ has an extra factor $t^{-1/2}$ but the decay of the factor $\Pi u_0$ is slower, since we can only have the decomposition
\begin{equation}\label{Piu, decompose}
\Pi u_0(x)=\frac{\Pi ((\diamond +\ii)u_0)(x)}{x+\ii}+ \frac{\ii\int_\R u_0(s)\dd s}{2\pi(x+\ii)}= v_1(x)+\frac{c_1}{x+\ii},
\end{equation}
where $v_1\in L^1(\R)$. We recall that $I_1(t,z,\lambda)=\la \Pi u_0,p_{t,z,\lambda}\ra $ where $p_{t,z,\lambda}$ is also decomposed into two parts according to~\eqref{eq:p-decompo}. 

 \subsubsection{The contribution of $p^0_{t,z,\lambda}$.} 
We already saw in Lemma~\ref{lem:J-G} that $J^0_{t,z,\lambda}$ is uniformly bounded, and that $G^0_{t,z,\lambda}$ is uniformly bounded and pointwise convergent.
Hence, by \eqref{eq:p0} and the dominated convergence theorem, there holds the strong convergence in $L^2(\Lambda)$
\[ \la v_1, p^0_{t,z,\lambda}\ra \td_t,\infty 0.\]
\\
Let us come to the study of $\la (x+\ii)^{-1},p^0_{t,z,\lambda}\ra $. We have
\[\la (x+\ii)^{-1},p^0_{t,z,\lambda}\ra = \overline{J^0_{t,z,\lambda}}\ee^{\ii t\lambda^2}\int_\R \frac{\ee^{-\ii \lambda x-\ii \frac{x^2}{4t}}}{x+\ii }\overline{ G^0_{t,z,\lambda}(x)}\frac{\dd x}{\sqrt{2t}}.\]
 Again, we extract the contribution of the critical point $x=-2t\lambda$ by introducing the cutoff function $\chi (-x/2t)$. In the integral  containing $1-\chi(-x/2t)$, we integrate once by parts in $x$, and we observe that this integral tends to $0$ as $t\to \infty$. As for the integral containing $\chi(-x/2t)$, we set $y=2ts$ in the definition~\eqref{eq:G0} of $G^0_{t,z,\lambda}$ and $x=-2t\mu$: we observe that this integral is bounded by 
\[ O(1) \sup_{\Lambda_-/4 \leq \mu \leq 4\Lambda_+}\left |\int_{-\infty}^{-\mu} \ee^{-\ii z(\mu+s)+\ii t(s^2+2\lambda s)}\dd s\right |=O(t^{-1/2}),\]
applying again the van der Corput estimate \eqref{VdC}.

Summing up, we have proved that $\la \Pi u_0,p^0_{t,z,\lambda}\ra \td_t,\infty 0$ strongly in $L^2(\Lambda)$.

 \subsubsection{The contribution of $p^1_{t,z,\lambda}$.} 
We first study the most delicate contribution in the decomposition~\eqref{Piu, decompose} of $\Pi u_0$, which is the one of $c_1/(x+\ii)$. We have, using again \eqref{eq:Phi,decompose} in the expression~\eqref{eq:P1-def} of $P^1_{t,z,\lambda}$,
\begin{equation}\label{eq:p1/(x+i)-2}
\la (x+\ii)^{-1},p^1_{t,z,\lambda}\ra = - \int_\R \frac{\ee^{-\ii \lambda x-\ii \frac{x^2}{4t}+\ii \frac{zx}{2t}}}{x+\ii}\int_{-\infty}^x \overline{G^2_{t,z,\lambda}}(y)\, \frac{\dd y\dd x}{{2t}},
\end{equation}
where $G^2_{t,z,\lambda}$ was defined in~\eqref{eq:G2}. Let us improve the bound~\eqref{eq:G2-bound}. 
Integrating by parts in $\beta$, and observing that, on the domain of the integral defining $G^2_{t,z,\lambda}$, 
$
\left |\frac{\beta}{\sqrt{2t}}+\frac{y}{2t}\right |\lesssim 1,
$
so that we get
\begin{equation}\label{est:B}
 |G^2_{t,z,\lambda}(y)|\lesssim \ee^{\frac{y\mathrm{Im}z}{2t}},
 \end{equation}
 and consequently
\begin{equation}\label{est:intB}
\left |\ee^{ \ii \frac{zx}{2t}} \int_{-\infty}^x \overline{G^2_{t,z,\lambda}}(y)\, \frac{\dd y}{{2t}} \right |=O(1).
\end{equation}

Next, as in the previous paragraph, we extract the contribution of the critical point $x=-2t\lambda$ by introducing the cutoff function $\chi(-x/2t)$. Then in the integral  containing $1-\chi(-x/2t)$, we integrate  by parts in $x$.  We get
\begin{equation}\label{eq:IPP3}
\ii\int_\R  \frac{\ee^{-\ii \lambda x-\ii \frac{x^2}{4t}+\ii \frac{zx}{2t}}}{x+\ii}\int_{-\infty}^x \overline{G^2_{t,z,\lambda}}(y) \frac{\dd y\dd x}{2t}=J_1+J_2+J_3+J_4,
\end{equation}
\begin{align*}
J_1
	&:=\int_\R \left (1-\chi\left (-\frac{x}{2t}\right )\right )\ee^{-\ii \lambda x-\ii \frac{x^2}{4t}+\ii \frac{zx}{2t}}\frac{\dd}{\dd x}\left (\frac{1}{(x+\ii)\left (\lambda +\frac{x-z}{2t}\right)}\right )\int_{-\infty}^x \overline{G^2_{t,z,\lambda}}(y) \frac{\dd y\dd x}{2t},\\
J_2
	&:=\int_\R \chi'\left (-\frac{x}{2t}\right )\frac{\ee^{-\ii \lambda x-\ii \frac{x^2}{4t}+\ii \frac{zx}{2t}}}{(x+\ii)\left (\lambda +\frac{x-z}{2t}\right)}\int_{-\infty}^x \overline{G^2_{t,z,\lambda}}(y) \frac{\dd y\dd x}{(2t)^2},\\
J_3
	&:=\int_\R \left (1-\chi\left (-\frac{x}{2t}\right )\right )\frac{\ee^{-\ii \lambda x-\ii \frac{x^2}{4t}+\ii \frac{zx}{2t}}}{(x+\ii)\left (\lambda +\frac{x-z}{2t}\right )} \overline{G^2_{t,z,\lambda}}(x)\frac{\dd x}{2t},
\\
J_4
	&:=\ii\int_\R \chi\left (-\frac{x}{2t}\right )\frac{\ee^{-\ii \lambda x-\ii \frac{x^2}{4t}+\ii \frac{zx}{2t}}}{x+\ii }\int_{-\infty}^x\overline{G^2_{t,z,\lambda}}(y)\frac{\dd y\dd x}{2t}.
\end{align*}
{\noindent\it Study of $J_1$ and $J_2$.} In view of \eqref{est:intB} and of the inequality $|x|\lesssim |x+2\lambda t|$ on the support of the integrand, the integrand in $J_1$ is uniformly  bounded by $(1+x^2)^{-1}$. Furthermore,  recalling $\Lambda=[\Lambda_-,\Lambda_+]\subset(0,\infty)$, one can use the uniform estimate in $\eta\in[\Lambda_-/4,4\Lambda_+]$, as $y\to -\infty $,
\[ |b_1(y,\eta)|\lesssim \frac{1}{1+|y|},\]
 from Lemma \ref{lem:a-structure} to improve \eqref{est:B} into
 \begin{equation}\label{est:Bminus}
  |{G^2_{t,z,\lambda}}(y)|\lesssim (1+|y|)^{-1}\ee^{\frac{y\mathrm{Im}z}{2t}},
 \end{equation}
as $y\to -\infty$. We conclude that, as $t\to \infty $, for fixed $x\in \R$,
\begin{equation}\label{est:intBminus}
 \left |\ee^{ \ii \frac{zx}{2t}} \int_{-\infty}^x \overline{G^2_{t,z,\lambda}}(y)\, \frac{\dd y}{2t} \right |=O\left ( \frac{\log t}{t}   \right ).
\end{equation}
Hence, by dominated convergence, $J_1\to 0$ uniformly in $\lambda\in\Lambda$ as $t\to \infty$. A similar argument leads to $J_2\to 0$, since, on the support of the integrand, $t\simeq |x|$.

{\noindent\it Study of $J_3$.} As for the third integral $J_3$, we perform again a partial integration,
\begin{align*}
\ii J_3
	=& \int_\R \left (1-\chi\left (-\frac{x}{2t}\right )\right )\ee^{-\ii \lambda x-\ii \frac{x^2}{4t}+\ii \frac{zx}{2t}}\frac{\dd }{\dd x}\left (\frac 1{(x+\ii )\left (\lambda +\frac{x-z}{2t}\right )^2} \right )\overline{G^2_{t,z,\lambda}}(x) \frac{\dd x}{2t}\\
	&+ \int_\R \chi'\left (-\frac{x}{2t}\right )\frac{\ee^{-\ii \lambda x-\ii \frac{x^2}{4t}+\ii \frac{zx}{2t}}}{(x+\ii )\left (\lambda +\frac{x-z}{2t}\right )^2} \overline{G^2_{t,z,\lambda}}(x)\frac{\dd x}{(2t)^2}\\
	&+\int_\R \left (1-\chi\left (-\frac{x}{2t}\right )\right )\frac{\ee^{-\ii \lambda x-\ii \frac{x^2}{4t}+\ii \frac{zx}{2t}}}{(x+\ii )\left (\lambda +\frac{x-z}{2t}\right )^2} \partial_x \overline{G^2_{t,z,\lambda}}(x)\frac{\dd x}{2t}.
\end{align*}
Using \eqref{est:B}, the first two integrals are $O(t^{-1})$. 

For the third integral, we use finer properties of $b_1$, that we decompose as
\begin{equation}\label{eq:b1,decompose}
 b_1(y,\eta)=(a(y,\eta)-1)\chi'(\eta)+ a^\sharp (y,\eta)\chi(\eta),
 \end{equation}
where $a^\sharp$ is defined in~\eqref{eq:asharp}. Lemmas \ref{lem:a-structure} and \ref{lem:asharp-derivative}, then \eqref{eq:a-1} combined with \eqref{Tum}  imply that 
\begin{equation}\label{b1-derivative}
\Vert \partial_yb_1 (\diamond ,\eta)\Vert_{\mathscr F}+\Vert \partial_\eta b_1 (\diamond,\eta)\Vert_{L^\infty(\R)}\leq C,
\end{equation}
where $\mathscr F$ is the space of functions $f\in L^1_{\mathrm{loc}}(\R)$ such that 
\[f(x)=f_1(x)+f_2(x),\]
where $f_2\in L^1(\R)$ and $(1+|x|)f_1(x)$ is bounded on $\R$, equipped with the norm \[\Vert f\Vert_\mathscr F:=\inf_{f=f_1+f_2}\Vert (1+|\diamond|)f_1\Vert_{L^\infty (\R)}+\Vert f_2\Vert_{L^1(\R)}.\]
We deduce the estimate
\[
\ee^{ \ii \frac{zx}{2t}}\partial_x\overline{G^2_{t,z,\lambda}}(x)=R_1(x)+R_2(x),\ 
\Vert R_1\Vert _\mathscr F\lesssim \sqrt t,\ \Vert R_2\Vert_{L^\infty}\lesssim \frac{1}{\sqrt{2t}}, \]
which follows from the estimates \eqref{b1-derivative} and of a crude estimate of the integral in $\beta$, since the support of integrand has a length $\lesssim \sqrt t$. Using again that $|x|\lesssim |x+2\lambda t|$ on the support of the integrand,  we  conclude that this integral is $O(t^{-1/2})$.

{\noindent\it Study of $J_4$.} Finally, let us study the integral $J_4$ involving the localization $\chi(-x/2t)$.

We set $x=-2t\mu $ and $y=2ts$, so that
\[ J_4=\int_\R \chi(\mu )\frac{2t }{2\ii t\mu +1}\ee^{2\ii t\lambda \mu -\ii t\mu ^2-\ii z\mu}\mathcal B_{t,z}(\mu)\dd \mu,\]
with
\[ \mathcal B_{t,z}(\mu):=\int_{-\infty}^{-\mu}\overline{G^2_{t,z,\lambda}}(2ts)\dd s.\]
In view of  the support of $\chi $, the support of the integrand is away from $\mu =0$, and the amplitude $\frac{2t }{2\ii t\mu +1}$ is bounded with all its derivatives.
Moreover, in view of the bounds \eqref{est:B} and \eqref{est:intB}, we have uniformly in the support of $\chi$,
\[ |\mathcal B_{t,z}(\mu)| =O(1),\  |\partial_\mu \mathcal B_{t,z}(\mu)| =O(1).\]
Applying the van der Corput estimate~\eqref{VdC}, we infer 
\[ J_4=O(t^{-1/2}).\]
Summing up, we have proved that $\la (x+\ii)^{-1},p^1_{t,z,\lambda}\ra \td_t,\infty 0$, uniformly in $\lambda \in \Lambda$. 

The contribution of the  other term 
$ \la v_1, p^1_{t,z,\lambda}\ra $
coming from \eqref{Piu, decompose} is easier to handle because $v_1$ is integrable, so, from \eqref{est:intB}, the integrand is then bounded by a fixed integrable function. Furthermore, we have already noticed in \eqref{est:intBminus} that the integrand is pointwise convergent to $0$. Therefore we can conclude using the dominated convergence theorem.

\appendix

\section{Properties of the distorted Fourier transform}\label{sec:distorted}

In this section, we derive some properties of the distorted Fourier transform and of the function $a$ defined as $a(x,\lambda)=\ee^{-\ii\lambda x}m_-(x,\lambda)$.

\subsection{Regularity of the distorted Fourier transform}

\begin{lemma}\label{lem:distortedH1}
Let $\eps>0$. Then $\widetilde{\Pi u_0}\in H^1(\eps,\infty)$.
\end{lemma}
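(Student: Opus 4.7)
The plan is to treat separately the $L^2$ bound and the $\lambda$-derivative bound on $\widetilde{\Pi u_0}$. The $L^2$ bound is immediate: by the distorted Plancherel identity~\eqref{Planchereldist} applied to $f=\Pi u_0 \in L^2_+(\R)$, one has $\int_0^\infty |\widetilde{\Pi u_0}(\lambda)|^2\, d\lambda/(2\pi) \leq \|\Pi u_0\|_{L^2}^2<\infty$, so $\widetilde{\Pi u_0}\in L^2(\epsilon,\infty)$ a fortiori.

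For the derivative, I would differentiate under the integral using $m_-(x,\lambda)=\ee^{\ii\lambda x}a(x,\lambda)$, which yields $\partial_\lambda\overline{m_-(x,\lambda)}=-\ii x\overline{m_-(x,\lambda)}+\ee^{-\ii\lambda x}\overline{\partial_\lambda a(x,\lambda)}$ and therefore
\[
\partial_\lambda\widetilde{\Pi u_0}(\lambda)=-\ii\int_\R x\,\Pi u_0(x)\,\overline{m_-(x,\lambda)}\,dx+\int_\R \Pi u_0(x)\,\ee^{-\ii\lambda x}\,\overline{\partial_\lambda a(x,\lambda)}\,dx.
\]
The first integral is the delicate one because $x\Pi u_0$ need not belong to $L^2$. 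I would use the identity $X^*\Pi u_0=x\Pi u_0+I_+(u_0)/(2\ii\pi)$: the hypothesis $xu_0\in L^2$ implies $\widehat{\Pi u_0}=\widehat{u_0}|_{(0,\infty)}\in H^1(0,\infty)$, so $\Pi u_0\in D(X^*)$ and $X^*\Pi u_0\in L^2_+(\R)$. By the distorted Plancherel identity, $\widetilde{X^*\Pi u_0}\in L^2(0,\infty)$. The leftover constant $I_+(u_0)/(2\ii\pi)$ produces the expression $\int_\R \overline{m_-(x,\lambda)}\,dx$ which, once one writes $m_-=\ee^{\ii\lambda x}+\ee^{\ii\lambda x}(a-1)$, splits as $2\pi\delta(\lambda)$ (supported at $0$, hence invisible on $(\epsilon,\infty)$) plus a remainder $\int_\R \ee^{-\ii\lambda x}(\overline{a(x,\lambda)}-1)\,dx$ controlled via the asymptotic structure of $a(x,\lambda)-1$ supplied by Lemma~\ref{lem:a-structure}.

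For the second integral, Lemma~\ref{lem:a-structure} together with Lemma~\ref{lem:asharp-derivative} furnishes uniform estimates on $\partial_\lambda a(x,\lambda)$ locally in $\lambda\in[\epsilon,M]$ for any $M<\infty$. Combining these with the decomposition $\Pi u_0(x)=v_1(x)+c_1/(x+\ii)$ from~\eqref{Piu, decompose}, the contribution of the $L^1$ part $v_1$ is dealt with by the dominated convergence theorem, while the non-integrable tail $c_1/(x+\ii)$ is handled by integration by parts in $x$ to exploit the oscillation of $\ee^{-\ii\lambda x}$. The main obstacle will be the interaction of the $1/x$ decay of $a(x,\lambda)-1$ as $x\to-\infty$ with the $1/(x+\ii)$ tail of $\Pi u_0$: the product produces only logarithmic integrability, so extracting sufficient decay in $\lambda$ relies on an integration by parts or van der Corput estimate in $x$ that must be uniform for $\lambda\geq\epsilon$.
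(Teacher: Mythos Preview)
Your proposal has a genuine gap and also misses the key simplification that makes the paper's proof short.

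\textbf{The missed simplification.} Since $m_-(\cdot,\lambda)\in L^\infty_+$, one may drop $\Pi$ \emph{before} differentiating: $\widetilde{\Pi u_0}(\lambda)=\int_\R u_0(x)\overline{m_-}(x,\lambda)\,dx$. Differentiating this representation, the first term becomes $-\ii\int_\R x\,u_0(x)\,\overline{m_-}(x,\lambda)\,dx$, which is the distorted Fourier transform of $xu_0\in L^2(\R)$ and therefore lies in $L^2(0,\infty)$ directly by Plancherel. Your detour through $X^*\Pi u_0$, the constant $I_+(u_0)/(2\ii\pi)$, the delta function, and the integral $\int_\R\ee^{-\ii\lambda x}(\overline{a}-1)\,dx$ is entirely avoidable. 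Worse, that last integral is problematic: the asymptotics of Lemma~\ref{lem:a-structure} describe $a(y,\lambda)-1$ only as $y\to-\infty$; as $y\to+\infty$, $a(y,\lambda)$ tends to a scattering coefficient, not to $1$, so $a-1$ does not decay there and your integral is not convergent in any obvious sense.

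\textbf{Circularity and the second term.} You invoke Lemmas~\ref{lem:a-structure} and~\ref{lem:asharp-derivative} to control $\partial_\lambda a$, but in the paper's logical order those lemmas rely on the boundedness of $\partial_\lambda a$ that is established \emph{inside} the proof of Lemma~\ref{lem:distortedH1}. The paper's self-contained argument is: derive the integral equation $(1-\ii\widetilde K_{u_0,\lambda})\partial_\lambda a = -(2\pi\lambda)^{-1}\ee^{-\ii\lambda x}\,\overline{\widetilde{\Pi u_0}}(\lambda)$, invert the Fredholm operator on $L^\infty_+$, and read off $\|\partial_\lambda a(\cdot,\lambda)\|_{L^\infty_x}\lesssim \lambda^{-1}|\widetilde{\Pi u_0}(\lambda)|$, which is in $L^2_\lambda(\epsilon,\infty)$. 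Since $u_0\in L^1$, the second term $\int u_0\,\ee^{-\ii\lambda x}\overline{\partial_\lambda a}\,dx$ is then bounded by $\|u_0\|_{L^1}\|\partial_\lambda a\|_{L^\infty_x}$. No decomposition of $\Pi u_0$, no integration by parts, no van der Corput is needed, and the resulting bound is global on $(\epsilon,\infty)$, not merely local as your ``$\lambda\in[\epsilon,M]$'' would give.
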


\begin{proof}
We already know that $\widetilde{\Pi u_0}\in L^2(0,\infty)$, hence we focus on the derivative
\[
\widetilde{\Pi u_0}'(\lambda)
	=\int_{\R}u_0(x)\overline{\partial_\lambda m_-}(x,\lambda)\dd x.
\]
We make the change of functions  $m_-(x,\lambda)=\ee^{\ii\lambda x}a(x,\lambda)$ and write
\[
\widetilde{\Pi u_0}'(\lambda)
	=-\int_{\R}\ii xu_0(x)\ee^{-\ii\lambda x}  \overline{m_-}(x,\lambda)\dd x
	+\int_{\R}u_0(x) \ee^{-\ii\lambda x} \overline{\partial_\lambda a}(x,\lambda)\dd x.
\]
Since $x u_0\in L^2_x(\R)$, the first term in the right-hand side is the distorted Fourier transform of a $L^2$ function, hence it belongs to $L^2_\lambda(0,\infty)$. Concerning the second term in the right-hand side, since $u_0\in L^1_x(\R)$, it is enough to show that 
\[
\|\partial_\lambda a(\cdot,\lambda)\|_{L^{\infty}_x}\in L^2_\lambda(\eps,\infty).
\]

We use the equation~\eqref{m} satisfied by $m_-$ to get that $a$ is solution to
 \begin{equation}\label{eq:a-1}
  a(y,\lambda)-1=\ii \int_{-\infty}^y \ee ^{-\ii \lambda x}T_{u_0}m_-(x,\lambda)\dd x .
  \end{equation}
We write $a(x,\cdot)$ as the limit in $L^{\infty}(\Lambda)$ as $\delta\to 0$ of the following absolutely convergent integral,
\[
a(x,\lambda)-1
	=\lim_{\delta\to 0}
	\ii \int_{-\infty}^x\ee^{-\ii \lambda y}T_{u_0}(\ee^{\ii\lambda\diamond}a(\diamond,\lambda))(y)\frac{\dd y}{(1-\ii\delta y)^2}.
\]
Then $\partial_\lambda a$ is the limit in the sense of distributions of
\begin{align}\label{eq:a-derivative}
\partial_\lambda a(x,\lambda)
	=&-\lim_{\delta\to 0}
	 \int_{-\infty}^x\ee^{-\ii \lambda y}[T_{u_0},y](\ee^{\ii\lambda\diamond}a(\diamond,\lambda))(y))\frac{\dd y}{(1-\ii\delta y)^2}
	\\ \nonumber
	&+\lim_{\delta\to 0}
	\ii \int_{-\infty}^x\ee^{-\ii \lambda y}T_{u_0}(\ee^{\ii\lambda\diamond}\partial_\lambda a(\diamond,\lambda))(y)\frac{\dd y}{(1-\ii\delta y)^2}.
\end{align}
Now we note that when $T_{u_0}f\in\mathrm{Dom}(X^*)$, there holds
\begin{equation}\label{eq:X*}
X^* T_{u_0}f(y) =T_{u_0}(yf)
\quad
\text{and}
\quad
X^* T_{u_0}f (y)= y T_{u_0}f (y) +\frac{I_+(T_{u_0} f)}{2\ii\pi}.
\end{equation}
This formula is valid when $f=\ee^{\ii\lambda\diamond}a(\diamond,\lambda)$ since $u_0f\in L^2$ and $xu_0f\in L^2$ by assumption.
Hence $[T_{u_0},y](\ee^{\ii\lambda\diamond}a(\diamond,\lambda))$ is a constant function of $y$. More precisely,
\[
[T_{u_0},y](\ee^{\ii\lambda\diamond}a(\diamond,\lambda))
	=\frac{I_+(\Pi (u_0 \ee^{\ii\lambda\diamond} a(\cdot,\lambda)))}{2\ii\pi}
	=\frac{\overline{\widetilde{\Pi u_0}}(\lambda)}{2\ii\pi}.
\]
 Finally, by integration by parts, we can show that for every $x\in\R$ and $\lambda\in\Lambda$ 
\[
\lim_{\delta\to 0}
	 \int_{-\infty}^x\ee^{-\ii \lambda y}\frac{\dd y}{(1-\ii\delta y)^2}
	 =\ii\frac{\ee^{-\ii\lambda x}}{\lambda}.
\]

In the second term in the right-hand side in the expression of $\partial_\lambda a$, we recall  that~\cite[Lemma 32]{BlackstoneGGM24a} 
 \[
K_{u_0,\lambda}m(x):=\int_{-\infty}^x \ee^{\ii\lambda(x-y)}T_{u_0} m(y)\dd y
\]
defines a compact operator $K_{u_0,\lambda}:L^{\infty}_+\to L^{\infty}_+$. Hence 
\[
\widetilde{K}_{u_0,\lambda}b(x):= \int_{-\infty}^x \ee^{-\ii\lambda y}T_{u_0} (\ee^{\ii\lambda\diamond}b)(y)\dd y
\]
also defines a compact operator $\widetilde{K}_{u_0}:L^{\infty}_+\to L^{\infty}_+$. Therefore we can pass to the limit $\delta\to 0$ in~\eqref{eq:a-derivative} and find that $\partial_\lambda a$ is solution to 
\begin{equation}\label{a-derivative}
(\mathrm{Id}-\ii \widetilde{K}_{u_0,\lambda})\partial_\lambda a
	=-\frac{1}{2\pi}\frac{\ee^{-\ii\lambda x}}{\lambda}\overline{\widetilde{\Pi u_0}}(\lambda),
\end{equation}
where the right-hand side belongs to $L^{\infty}_+$ and $\mathrm{Id}-\ii\widetilde{K}_{u_0,\lambda}$ is an invertible operator $L^{\infty}_+\to L^{\infty}_+$. The decay of the right-hand side as $\lambda\to\infty$ is then enough to infer $\|\partial_\lambda a\|_{L^{\infty}_x}\in L^2_\lambda((\eps,\infty))$.
\end{proof}

\subsection{Asymptotics of $a$ and $\partial_\lambda a$.}
 At this stage we are going to make use of assumption \eqref{hypu}.  
 \begin{lemma}\label{lem:a-structure} 
 The functions $a$ and $\partial_\lambda a$ are bounded on $\R \times [\lambda_0,\infty )$ for every $\lambda_0>0$. Furthermore, as $y\to -\infty$, we have
 \begin{equation}\label{a-1,decay}
 a(y,\lambda)-1=-\ii \frac{c_0}{y+\ii}-\ii \frac{\overline{\widetilde {\Pi u_0}}(\lambda )}{2\pi \lambda}\frac{\ee ^{-\ii \lambda y}}{(y+\ii )}+O\left (\frac{1}{|y|^{3/2}}\right ),
 \end{equation} 
and 
\begin{equation}\label{a-derivative,decay}
 \partial_\lambda a(y,\lambda)=-\frac{\overline{\widetilde{\Pi u_0}}(\lambda)}{2\pi\lambda}\ee^{-\ii\lambda y}-\frac{\int_\R u_0(s)n(s,\lambda)\dd s}{2\pi \lambda}\frac{\ee ^{-\ii \lambda y}}{y+\ii }+O\left (\frac 1{|y|^{3/2}}\right ). 
 \end{equation}
 Moreover, the remainder terms are uniform for $(y,\lambda)\in(-\infty,-y_0]\times[\lambda_0,\infty)$.
 \end{lemma}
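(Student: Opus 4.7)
The starting point is the integral equation $a(y,\lambda) - 1 = \ii\int_{-\infty}^y \ee^{-\ii\lambda x}\, T_{u_0} m_-(x,\lambda)\dd x$ from~\eqref{eq:a-1}. Writing $T_{u_0}m_- = \Pi(u_0 m_-) = u_0 m_- - (I-\Pi)(u_0 m_-)$ splits the right-hand side into a real-space part and a negative-Hardy complement:
\[
a(y,\lambda)-1 = \ii\int_{-\infty}^y u_0(x)\,a(x,\lambda)\dd x - \ii\int_{-\infty}^y \ee^{-\ii\lambda x}(I-\Pi)(u_0 m_-)(x)\dd x.
\]
Boundedness of $a$ and $\partial_\lambda a$ follows from the invertibility of $1 - \ii\widetilde{K}_{u_0,\lambda}$ on $L^\infty_+$ (already used in the proof of Lemma~\ref{lem:distortedH1}) applied to $(1-\ii\widetilde{K}_{u_0,\lambda})(a-1) = \ii\widetilde{K}_{u_0,\lambda}(\mathbf 1)$ and to~\eqref{a-derivative}; both right-hand sides are uniformly bounded in $(x,\lambda)$, and uniformity of the inverse for $\lambda \in [\lambda_0, \infty)$ follows from continuity on compact subsets together with a non-stationary-phase estimate yielding $\|\widetilde{K}_{u_0,\lambda}\|_{L^\infty_+\to L^\infty_+}\to 0$ as $\lambda \to \infty$.

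The real-space integral is handled using the structural hypothesis $x^2 u_0 = c_0 + v_0$ with $v_0 \in L^2$: as $y \to -\infty$, explicit integration gives $\int_{-\infty}^y c_0/x^2\dd x = -c_0/y$, Cauchy--Schwarz bounds $\int_{-\infty}^y v_0(x)/x^2\dd x$ by $O(|y|^{-3/2})$, and the residual $\int_{-\infty}^y u_0(x)(a-1)\dd x = O(1/y^2)$ follows from $u_0 \in L^1$ (a consequence of~\eqref{hypu}) combined with a preliminary soft rate $a-1 = O(1/y)$ obtained by a simpler version of the same argument. Since $-c_0/y = -c_0/(y+\ii) + O(1/y^2)$, multiplication by $\ii$ produces the first term $-\ii c_0/(y+\ii)$ of~\eqref{a-1,decay}.

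The oscillatory integral rests on the identity $(I-\Pi)(u_0 m_-)(x) = \overline{\Pi(u_0 \overline{m_-})(x)}$ (since $u_0$ is real). The Fourier value at $0^+$ of the inside Hardy function is
\[
I_+\bigl(\Pi(u_0 \overline{m_-})\bigr) = \int_\R u_0(x)\,\overline{m_-(x,\lambda)}\dd x = \widetilde{\Pi u_0}(\lambda),
\]
where the last equality uses $\int_\R \Pi u_0(x)\, m_-(x,\lambda)\dd x = 0$, a consequence of $m_- - \ee^{\ii\lambda\cdot}$ having Fourier support in $[0,\infty)$ (disjoint from that of $\widehat{\overline{\Pi u_0}}$). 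Standard Hardy-space asymptotics then give $(I-\Pi)(u_0 m_-)(x) = -\ii\,\overline{\widetilde{\Pi u_0}(\lambda)}/(2\pi x) + O(1/x^2)$ as $x \to -\infty$, and one integration by parts (the phase $\ee^{-\ii\lambda x}$ being nondegenerate for $\lambda \geq \lambda_0$) produces
\[
-\ii\int_{-\infty}^y \ee^{-\ii\lambda x}\cdot\frac{-\ii\,\overline{\widetilde{\Pi u_0}(\lambda)}}{2\pi x}\dd x = -\frac{\ii\,\overline{\widetilde{\Pi u_0}(\lambda)}}{2\pi\lambda}\cdot\frac{\ee^{-\ii\lambda y}}{y+\ii} + O(|y|^{-3/2}),
\]
the second term of~\eqref{a-1,decay}. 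The analysis of~\eqref{a-derivative,decay} is parallel: the right-hand side of~\eqref{a-derivative} furnishes the pointwise leading term $-\overline{\widetilde{\Pi u_0}(\lambda)}/(2\pi\lambda)\cdot \ee^{-\ii\lambda y}$, and one application of $\ii\widetilde{K}_{u_0,\lambda}$, analyzed by the same Hardy-space expansion and IBP, yields the $\ee^{-\ii\lambda y}/(y+\ii)$ correction with coefficient $-\bigl(\int u_0(s)\, n(s,\lambda)\dd s\bigr)/(2\pi\lambda)$, where $n(s,\lambda)$ is identified as the Volterra iterate arising in this computation.

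The main obstacle is the precise identification of the oscillatory coefficient as $\overline{\widetilde{\Pi u_0}(\lambda)}$ rather than the naive $\overline{\widehat{u_0}(\lambda)}$: this is exactly what the orthogonality $\int \Pi u_0 \cdot m_-\dd x = 0$ encodes, and without this step the stated formulas would be incorrect. A secondary but delicate technical point is preserving the sharp $O(|y|^{-3/2})$ rate throughout every step---it matches exactly the Cauchy--Schwarz bound on the $v_0/x^2$ contribution, so each integration by parts and every subsidiary estimate (including the bootstrap providing the preliminary $O(1/y)$ decay of $a-1$) must reach at least this sharpness.
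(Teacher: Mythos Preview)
Your approach is correct in outline but differs from the paper's organization, and the two converge once you fill in one step that you have labeled ``standard.''

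The paper does not split $T_{u_0}m_-=u_0m_- - (I-\Pi)(u_0m_-)$. Instead it applies the iterated $X^*$ identity $\Pi f = \Pi((x+\ii)f)/(x+\ii) - I_+(\Pi f)/(2\ii\pi(x+\ii))$ twice, together with \eqref{hypu}, to write $T_{u_0}m_-$ directly as
\[
T_{u_0}m_- = \frac{c_0 m_-}{(x+\ii)^2} + \frac{\ii}{2\pi}\frac{\overline{\widetilde{\Pi u_0}}(\lambda)}{x+\ii} + \frac{c_1(\lambda)}{(x+\ii)^2} + \frac{w_1(x,\lambda)}{(x+\ii)^2},\qquad w_1(\cdot,\lambda)\in L^2,
\]
and then integrates each piece in \eqref{eq:a-1}. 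The $c_0 m_-/(x+\ii)^2$ piece gives your non-oscillatory $-\ii c_0/(y+\ii)$ after a bootstrap ($a-1\in L^2$ first, then Cauchy--Schwarz on $(a-1)/(x+\ii)^2$), the $1/(x+\ii)$ piece gives the oscillatory term after one integration by parts, and the $w_1/(x+\ii)^2$ piece gives $O(|y|^{-3/2})$ by Cauchy--Schwarz. The argument for $\partial_\lambda a$ is identical with $m_-$ replaced by $n$.

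Your real-space/anti-Hardy split is a legitimate repackaging of this, but the step ``standard Hardy-space asymptotics then give $(I-\Pi)(u_0 m_-)(x)=-\ii\,\overline{\widetilde{\Pi u_0}(\lambda)}/(2\pi x)+O(1/x^2)$'' is not standard and is exactly where the work hides. One application of the $X^*$ identity to $\Pi(u_0\overline{m_-})$ gives only a remainder $\Pi(xu_0\overline{m_-})/x$, which is merely $L^2/x$ and yields $O(|y|^{-1/2})$ after Cauchy--Schwarz---far short of $O(|y|^{-3/2})$. To reach a remainder of the required structured form $(\mathrm{const}+L^2)/x^2$ you must iterate, and the iteration needs both \eqref{hypu} (to write $x^2u_0\overline{m_-}=c_0\overline{m_-}+v_0\overline{m_-}$) and the observation $\Pi(c_0\overline{m_-})=0$ (since $\overline{m_-}$ is anti-Hardy) to dispose of the non-$L^2$ piece. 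Once you carry this out, you have reproduced the paper's decomposition \eqref{Tum} almost verbatim, just conjugated. Your final paragraph correctly flags $O(|y|^{-3/2})$ as delicate, but attributes it only to the real-space $v_0/x^2$ term; the oscillatory remainder contributes a term of exactly the same type and requires the same input.

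In short: the orthogonality $\int\Pi u_0\cdot m_-=0$ is a clean way to identify the oscillatory coefficient, and your split makes the non-oscillatory versus oscillatory origin of the two leading terms transparent; but the route is not more elementary than the paper's, because the Hardy-remainder step, once justified, is the paper's computation.
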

 \begin{proof} The boundedness of $a$ and $\partial_\lambda a$ have already been established in the proof of Lemma~\ref{lem:distortedH1}. Let us prove \eqref{a-1,decay} and \eqref{a-derivative,decay}.
 From \eqref{hypu} and  the properties~\eqref{eq:X*} of $X^*$, we infer
 \begin{equation}\label{Tum}
 T_{u_0}m_-(x,\lambda)=c_0\frac{m_-(x,\lambda)}{(x+\ii)^2}+ \frac{\ii}{2\pi}\frac{\overline{\widetilde {\Pi u_0}}(\lambda )}{x+\ii}+\frac{ c_1(\lambda)}{(x+\ii)^2}+\frac{w_1(x,\lambda)}{(x+\ii)^2},
 \end{equation}
 where \[c_1(\lambda):=\frac{\ii}{2\pi}\int_\R \frac{v_0(x)+(2\ii x-1)u_0(x)}{x+\ii}m_-(x,\lambda)\dd x\] and $w_1(x,\lambda):=T_{v_0+(2\ii x-1)u_0}m_-(x,\lambda)$. Notice that, if $\lambda \geq \lambda_0>0$, $c_1(\lambda)$ is uniformly bounded  and $w_1(\diamond,\lambda)$ is uniformly bounded in $L^2$. Next we recall that $a$ is solution to~\eqref{eq:a-1}
 so that, from~\eqref{Tum}, we obtain, as $y\to -\infty$, that $a(y,\lambda ) -1$ is bounded in $L^2$ and finally \eqref{a-1,decay} follows.\\
 We proceed similarly for the estimate of $\partial_\lambda a(y,\lambda)$ as $y\to -\infty$.  Indeed, using the decomposition \eqref{a-derivative}, we first obtain
 \begin{equation}\label{a-derivative-structure}
  \partial_\lambda a(y,\lambda)=-\frac{1}{2\pi}\frac{\ee^{-\ii\lambda y}}{\lambda}\overline{\widetilde{\Pi u_0}}(\lambda)+\ii \int_{-\infty}^y \ee^{\ii \lambda x}T_{u_0}n(x,\lambda)\dd x,
  \end{equation}
 where $n(x,\lambda):=\ee ^{\ii \lambda x }\partial_\lambda a(x, \lambda)=\partial_\lambda m_-(x,\lambda)-\ii xm_-(x,\lambda)$. 
 Then we write, similarly to \eqref{Tum},
 \begin{equation}\label{Tun}
 T_{u_0}n(x,\lambda)=c_0\frac{n(x,\lambda)}{(x+\ii)^2}+ 
  \frac{\ii}{2\pi} \frac{ \int_\R u_0(s)n(s,\lambda)\dd s}{x+\ii}+\frac{ c_2(\lambda)}{(x+\ii)^2}+\frac{w_2(x,\lambda)}{(x+\ii)^2},\end{equation}
 where
 \[c_2(\lambda):=\frac{\ii}{2\pi}\int_\R \frac{v_0(x)+(2\ii x-1)u_0(x)}{x+\ii}n(x,\lambda)\dd x\] and $w_2(x,\lambda):=T_{v_0+(2\ii x-1)u_0}n(x,\lambda)$, hence $w_2(\diamond, \lambda)\in L^2(\R)$ uniformly in $\lambda \geq \lambda_0$. 
 Since we know that $\partial_\lambda a(y,\lambda )$ is uniformly bounded, we conclude that, as $\lambda \geq \lambda_0>0$, $c_2(\lambda)$ is bounded and 
 $w_2(\diamond,\lambda)$ is uniformly bounded in $L^2$. Coming back to \eqref{a-derivative-structure}, we obtain \eqref{a-derivative,decay}.
\end{proof}

\subsection{Decaying part in $\partial_\lambda a$}
Let us define
\begin{equation}
 \label{eq:asharp} a^\sharp(y,\lambda)
 	:=\partial_\lambda a(y,\lambda)+\frac{\ee^{-\ii \lambda y}\overline{\widetilde{\Pi u_0}}(\lambda)}{2\pi \lambda}= \ii\int_{-\infty}^y \ee^{-\ii \lambda x}T_{u_0}n(x,\lambda)\dd x,
\end{equation}
where we recall that $n(x,\lambda)=\ee^{\ii \lambda x} \partial_\lambda a(x,\lambda)$.
We introduce the space $\mathscr F$ of functions $f\in L^1_{\mathrm{loc}}(\R)$ such that 
\[f(x)=f_1(x)+f_2(x),\]
where $f_2\in L^1(\R)$ and $(1+|x|)f_1(x)$ is bounded on $\R$, equipped with the norm \[\Vert f\Vert_\mathscr F:=\inf_{f=f_1+f_2}\Vert (1+|\diamond|)f_1\Vert_{L^\infty (\R)}+\Vert f_2\Vert_{L^1(\R)}.\]
\begin{lemma}\label{lem:asharp-derivative}
For $\lambda \geq \lambda_0>0$, we have
\[ \Vert \partial_ya^\sharp (\diamond ,\lambda)\Vert_{\mathscr F}+\Vert \partial_\lambda a^\sharp (\diamond,\lambda)\Vert_{L^\infty(\R)}\leq C.\]
\end{lemma}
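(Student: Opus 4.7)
I would establish the two bounds separately. For $\partial_y a^\sharp$, differentiate the integral representation \eqref{eq:asharp} directly:
\[
\partial_y a^\sharp(y,\lambda) = \ii \ee^{-\ii\lambda y}T_{u_0}n(y,\lambda),
\]
and insert the decomposition \eqref{Tun} of $T_{u_0}n$ already derived in the proof of Lemma~\ref{lem:a-structure}. The slowly decaying piece $\frac{\ii\,\ee^{-\ii\lambda y}}{2\pi(y+\ii)}\int_\R u_0(s)n(s,\lambda)\dd s$ has $(1+|y|)$-weighted $L^\infty$ norm bounded uniformly in $\lambda\geq \lambda_0$ (using $u_0\in L^1(\R)$, which follows from $u_0,xu_0\in L^2$, and the uniform bound on $n$), so it supplies the $f_1$ component in the $\mathscr F$-decomposition; the remaining terms $\ee^{-\ii\lambda y}(c_0 n + c_2(\lambda)+w_2(y,\lambda))/(y+\ii)^2$ lie in $L^1(\R)$ uniformly in $\lambda$ (the first two by direct integration and the $w_2$-term by Cauchy--Schwarz against $(y+\ii)^{-2}\in L^2$), supplying the $f_2$ component.

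The bound on $\partial_\lambda a^\sharp$ is more subtle because differentiating \eqref{eq:asharp} under the integral sign formally produces a factor of $x$ that destroys absolute convergence. I would argue instead at the level of the integral equation. Combining \eqref{a-derivative} with the definition of $a^\sharp$ gives
\[
(I - \ii\widetilde{K}_{u_0,\lambda})\,a^\sharp(\cdot,\lambda) = -\ii\widetilde{K}_{u_0,\lambda}(b_\lambda), \qquad b_\lambda(y) := \frac{\ee^{-\ii\lambda y}\overline{\widetilde{\Pi u_0}}(\lambda)}{2\pi\lambda},
\]
where $\widetilde{K}_{u_0,\lambda}$ is the compact operator on $L^\infty_+$ introduced in the proof of Lemma~\ref{lem:distortedH1}, and $(I - \ii\widetilde{K}_{u_0,\lambda})^{-1}$ is bounded on $L^\infty_+$ with norm uniform in $\lambda$ on compact subsets of $(0,\infty)$ by continuous dependence. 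Differentiating this identity formally in $\lambda$ yields
\[
(I - \ii\widetilde{K}_{u_0,\lambda})\,\partial_\lambda a^\sharp = \ii(\partial_\lambda \widetilde{K}_{u_0,\lambda})(a^\sharp - b_\lambda) - \ii\widetilde{K}_{u_0,\lambda}(\partial_\lambda b_\lambda),
\]
reducing the task to bounding the right-hand side uniformly in $L^\infty_+$.

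The key computation, carried out via the $(1 - \ii\delta y)^{-2}$ regularization already used in the proof of Lemma~\ref{lem:distortedH1} together with the commutator relation $y T_{u_0}(f) = T_{u_0}(zf) - I_+(T_{u_0}f)/(2\ii\pi)$ from \eqref{eq:X*}, is the identity
\[
\partial_\lambda \widetilde{K}_{u_0,\lambda}(h)(x) = \frac{\ii\, I_+\bigl(T_{u_0}(\ee^{\ii\lambda\cdot}h)\bigr)}{2\pi\lambda}\,\ee^{-\ii\lambda x}.
\]
Since $u_0\in L^1(\R)$, the numerator is controlled by $\|u_0\|_{L^1}\|h\|_{L^\infty}$, so $h\mapsto \partial_\lambda\widetilde{K}_{u_0,\lambda}(h)$ is a bounded operator on $L^\infty_+$ uniformly in $\lambda\geq\lambda_0$; applied to $h=a^\sharp$ and $h=b_\lambda$ (both uniformly bounded by Lemma~\ref{lem:a-structure}) this settles the first two terms. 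For $\widetilde{K}_{u_0,\lambda}(\partial_\lambda b_\lambda)$, split $\partial_\lambda b_\lambda = -\frac{\ii y\,\ee^{-\ii\lambda y}}{2\pi\lambda}\overline{\widetilde{\Pi u_0}}(\lambda) + \ee^{-\ii\lambda y}\partial_\lambda\bigl(\overline{\widetilde{\Pi u_0}}(\lambda)/(2\pi\lambda)\bigr)$; the second piece is of the form $\ee^{-\ii\lambda y}\times\text{scalar}$ and falls under the previous discussion, while the $y$-factor in the first piece is absorbed by the same commutator identity, reducing to the boundedness of $\int_{-\infty}^x \ee^{-\ii\lambda y}\Pi(zu_0)(y)\dd y$, which follows from one integration by parts using that $z u_0 \in H^1(\R)$ forces $\Pi(zu_0)\in H^1_+\subset C_0$.

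I expect the main obstacle to be the bookkeeping of the regularization step: interchanging $\partial_\lambda$ with the conditionally convergent integrals defining $\widetilde{K}_{u_0,\lambda}$ and $a^\sharp$, and verifying that the commutator manipulations collapse to the clean boundary contribution above. This is precisely the role played by the auxiliary cutoff $(1-\ii\delta y)^{-2}$ in the proof of Lemma~\ref{lem:distortedH1}, which I would transplant here essentially verbatim.
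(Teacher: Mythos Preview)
Your approach is essentially the same as the paper's. The $\partial_y a^\sharp$ bound via \eqref{Tun} is identical, and for $\partial_\lambda a^\sharp$ your organization through the operator derivative $\partial_\lambda\widetilde{K}_{u_0,\lambda}$ is a clean repackaging of exactly the computation the paper performs: unwinding your formula for $\partial_\lambda\widetilde{K}_{u_0,\lambda}$ applied to $a^\sharp-b_\lambda=\partial_\lambda a$ reproduces the first term of the paper's right-hand side for $(1-\ii\widetilde K_{u_0,\lambda})\partial_\lambda a^\sharp$, and your splitting of $\widetilde{K}_{u_0,\lambda}(\partial_\lambda b_\lambda)$ reproduces the two remaining terms $\int_{-\infty}^y\ee^{-\ii\lambda x}\Pi u_0(x)\dd x$ and $\int_{-\infty}^y\ee^{-\ii\lambda x}\Pi(\diamond u_0)(x)\dd x$ with the correct scalar prefactors.

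There is one genuine gap at the very end. Your claim that the boundedness of $\int_{-\infty}^x\ee^{-\ii\lambda y}\Pi(\diamond u_0)(y)\dd y$ ``follows from one integration by parts using $\Pi(zu_0)\in H^1_+\subset C_0$'' does not close: integration by parts leaves you with $\frac{1}{\ii\lambda}\int_{-\infty}^x\ee^{-\ii\lambda y}\bigl(\Pi(\diamond u_0)\bigr)'(y)\dd y$, and knowing only that the integrand lies in $L^2$ is not enough to bound an improper oscillatory integral of this type. What is actually needed here is the structural hypothesis \eqref{hypu}: writing $(x+\ii)\Pi(\diamond u_0)(x)=\Pi((\diamond+\ii)\diamond u_0)(x)+\tfrac{\ii}{2\pi}\int su_0(s)\dd s = c_0+\Pi(v_0)+\ii\Pi(\diamond u_0)+\text{const}$, one obtains a decomposition $\Pi(\diamond u_0)=(L^1\text{ function})+\text{const}/(x+\ii)$, and each piece gives a uniformly bounded oscillatory primitive. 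This is precisely how the paper handles it; once you replace the integration-by-parts step by this decomposition, your argument is complete and coincides with the paper's.
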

\begin{proof} We have
\[ \partial_ya^\sharp (y,\lambda)=\ii \ee^{-\ii \lambda y}T_{u_0}n(y,\lambda),\]
and the bound follows from \eqref{Tun}. As for the second estimate, we proceed as in the proof of  \eqref{a-derivative}. We obtain
\begin{align*}
 (\mathrm{Id}-\ii \widetilde{K}_{u_0,\lambda})\partial_\lambda a^\sharp 
 	=&- \ee^{-\ii\lambda x}\frac{\int_\R u_0(s)n(s,\lambda)\dd s}{2\pi \lambda}
 	-\ii \frac{\dd }{\dd\lambda}\left(\frac{\overline{\widetilde{\Pi u_0}}(\lambda)}{2\pi \lambda}\right)\int_{-\infty}^y \ee^{-\ii \lambda x}\Pi u_0(x)\dd x\\
 	&-\frac{\overline{\widetilde{\Pi u_0}}(\lambda)}{2\pi \lambda}\int_{-\infty}^y \ee^{-\ii \lambda x}\Pi(\diamond\, u_0)(x)\dd x.
\end{align*}
We recall that  the decomposition~\eqref{hypu} and the properties~\eqref{eq:X*} of $X^*$ imply
\[
\Pi (\diamond\, u_0)(x)
=\frac{\Pi ((\diamond+\ii)\diamond u_0)(x)}{x+\ii}
	+\frac{\ii \int_{\R}su_0(s)\dd s}{2\pi(x+\ii)}
	=\frac{c_0 +\Pi (v_0)+\ii\Pi (\diamond\, u_0)(x)}{x+\ii}
	+\frac{\ii \int_{\R}su_0(s)\dd s}{2\pi(x+\ii)},
\]
so that $\int_{-\infty}^y \ee^{-\ii \lambda x}\Pi(\diamond\, u_0)(x)\dd x$ is uniformly bounded in the variable $y$.
The invertibility of $\mathrm{Id}-\ii \widetilde{K}_{u_0,\lambda}$ on $L^\infty (\R)$ combined with the decomposition \eqref{Piu, decompose} leads to the $L^\infty $ bound for $\partial_\lambda a^\sharp$.
\end{proof}

\bibliographystyle{alpha}
\bibliography{references}

\end{document}